\def\tank#1{\protected@xdef\@thanks{\@thanks
 \protect\footnotetext[0]{#1}}}
\def\bigfoot{

 \@footnotetext}
\newcommand{\ea}{\end{array}}
\numberwithin{equation}{section}
\newtheorem{theorem}{Theorem}[section]
\newtheorem{lemma}{Lemma}[section]
\newtheorem{proposition}{Proposition}[section]
\newtheorem{remark}{Remark}[section]
\newtheorem{definition}{Definition}[section]
\def\beq{\begin{equation}}
\def\nneq{\end{equation}}
\def\bthm{\begin{theorem}}
\def\nthm{\end{theorem}}
\def\blem{\begin{lemma}}
\def\nlem{\end{lemma}}
\def\bprf{\begin{proof}}
\def\nprf{\end{proof}}
\def\bprop{\begin{prop}}
\def\nprop{\end{prop}}
\def\brmk{\begin{rem}}
\def\nrmk{\end{rem}}
\def\bexa{\begin{exa}}
\def\nexa{\end{exa}}
\def\bcor{\begin{cor}}
\def\ncor{\end{cor}}
\def\e{{\varepsilon}}
\def\Var{\mathrm {Var}}
\def\e{\mathrm {e}}
\def\card{{\mathrm{card}}}
\title[Lower functions and Chung's LILs of GFBM]{Lower functions and Chung's LILs of the 
generalized fractional Brownian motion}
\author{Ran Wang}
\address[]{Ran Wang, School of Mathematics and Statistics,  Wuhan University,  Wuhan, 430072,  
China.}
\email{rwang@whu.edu.cn}
\author{Yimin Xiao}
\address[ ]{Yimin Xiao, Department of Statistics and Probability, Michigan State University, East Lansing, 
MI 48824, USA.}\email{xiaoy@msu.edu}
\date{}
\begin{document}
\maketitle

 \noindent {\bf Abstract:}   Let $X:=\{X(t)\}_{t\ge0}$  be a 
  generalized fractional Brownian motion (GFBM) introduced by Pang and Taqqu (2019):  
$$
 \big\{X(t)\big\}_{t\ge0}\overset{d}{=}\left\{  \int_{\mathbb R}  \left((t-u)_+^{\alpha}-(-u)_+^{\alpha} \right) 
 |u|^{-\gamma} B(du)  \right\}_{t\ge0},
$$  with parameters $\gamma \in (0, 1/2)$ and 
$\alpha\in \left(-\frac12+ \gamma , \,   \frac12+ \gamma \right)$.   
  
Continuing the studies of sample path properties of GFBM $X$ in Ichiba, Pang and Taqqu (2021) 
and Wang and Xiao (2021), we establish integral criteria for the lower functions  of  $X$ at $t=0$ 
and at infinity by modifying the arguments of Talagrand (1996). As a consequence of the 
integral criteria, we derive the Chung-type laws of the iterated logarithm of $X$ at the  $t=0$
and at infinity, respectively. This solves a problem in Wang and Xiao (2021).
 
 \vskip0.3cm
 \noindent{\bf Keyword:} {Generalized fractional Brownian motion; lower function; Chung's LIL;
 small ball probability.}
 \vskip0.3cm

\noindent {\bf MSC: } {60G15, 60G17, 60G18, 60G22.}
\vskip0.3cm

 \section{Introduction}

The generalized fractional Brownian motion (GFBM, in  short) $X:=\big\{X(t)\big\}_{t\ge0}$ is a centered Gaussian  
self-similar process introduced by Pang and Taqqu \cite{PT2019} as the scaling limit of a sequence of 
power-law shot noise processes. It has the following integral representation:
 \begin{align}\label{eq X}
 \big\{X(t)\big\}_{t\ge0}\overset{d}{=}&\left\{  \int_{\mathbb R}  \left((t-u)_+^{\alpha}-(-u)_+^{\alpha} \right) 
 |u|^{-\gamma} B(du)  \right\}_{t\ge0},
 \end{align}
 where the parameters $\gamma$ and $\alpha$ satisfy
 \begin{align}\label{eq constant}
 \gamma\in\left(0,\ \frac12\right),  \ \  \alpha\in \left(-\frac12+\gamma, \  \frac12+\gamma \right),
 \end{align}
and  $B$ is a two-sided  Brownian motion on $\mathbb R$.
It follows that the Gaussian process $X$ is self-similar with index $H$ given by  
 \begin{align}\label{eq H}
 H=\alpha-\gamma+\frac12\in(0,1).
 \end{align}
     
If $\gamma=0$,  then $X$ becomes an ordinary fractional Brownian motion (FBM, in short)  $B^H$, which 
can be represented as:
\begin{align}\label{eq FBM}
\big\{B^H(t)\big\}_{t\ge 0}\overset{d}{=}\left\{ \int_{\mathbb R}  
\Big((t-u)_+^{H-\frac12}-(-u)_+^{H-\frac12} \Big) B(du)  \right\}_{t\ge0}.
\end{align}

As shown by Pang and Taqqu \cite{PT2019},  GFBM $X$ preserves the self-similarity property while the 
factor $|u|^{-\gamma}$ introduces non-stationarity of  the increments, which is useful for reflecting the 
non-stationarity increments property in physical systems. Ichiba, Pang and Taqqu \cite{IPT2020} established
 the H\"older continuity, the functional and local laws of the 
iterated logarithm of GFBM and showed that these properties are determined by the self-similarity index $H 
= \alpha-\gamma+ 1/2$.  More recently, Ichiba, Pang and Taqqu \cite{IPT2020b} studied the semimartingale 
properties of GFBM $X$ and its mixtures and applied them to model the volatility processes in finance.

In \cite{WX2021},  we studied some precise sample path properties of GFBM $X$, including the exact 
uniform modulus of  continuity, small ball probabilities, and Chung's LIL at any fixed point $t>0$. In contrast 
to the theorems of Ichiba, Pang and Taqqu \cite{IPT2020b}, our results show that the uniform modulus of  
continuity and Chung's LIL at any fixed point $t>0$ are determined mainly by the parameter $\alpha$, while
$\gamma$ plays a less important role. Roughly speaking, for $\alpha<1/2$, the results in \cite{WX2021} on 
uniform modulus of  continuity and  Chung's LIL at $t>0$ are analogous to the corresponding results for a
fractional Brownian motion with index $\alpha+1/2$.   
For example, Theorem 1.5 in \cite{WX2021} shows the following Chung's LILs for GFBM $X$ and  its derivative 
$X'$ (which exists when $\alpha>1/2$) at any fixed $t>0$:
\begin{itemize}
\item[(a).]\,
If $\alpha\in(-1/2+\gamma/2,1/2)$, then there exists a constant $c_{1,1}\in(0,\infty)$ 
 such that for every $t>0$, 
    \begin{align}\label{eq LIL01}
   \liminf_{r\rightarrow0+} \sup_{ |h|\le r}\frac{ |X(t+h)-X(t)|} {r^{\alpha+1/2}/(\ln \ln r^{-1})^{\alpha+1/2}}
   =c_{1,1}t^{-\gamma},   \ \ \ \text{a.s.}
   \end{align} 
  \item[(b).]\, 
 If $\alpha\in(1/2, \, 1/2+\gamma/2)$,  then there exists a constant $c_{1,2}\in(0,\infty)$  
 such that  for every $t>0$,   
    \begin{align}\label{eq LIL01b}
   \liminf_{r\rightarrow0+} \sup_{ |h|\le r}\frac{ |X'(t+h)-X'(t)|} {r^{\alpha-1/2}/(\ln \ln r^{-1})^{\alpha-1/2}}
   =c_{1,2} t^{-\gamma}, \  \ \ \ \text{a.s.}
   \end{align}      
  \end{itemize}  
As $t\rightarrow 0+$, the terms on the right-hand sides of \eqref{eq LIL01} and \eqref{eq LIL01b} tend to 
$+\infty$. This suggests that the scaling functions on the left-hand sides of \eqref{eq LIL01}  and 
\eqref{eq LIL01b} are not optimal in the neighborhood of the origin. The problem on Chung's LIL for GFBM 
$X$ at $t=0$ was left open in \cite{WX2021}.

The main objective of the present paper is to establish Chung's LILs of GFBM $X$ at $t=0$ and at infinity. 
In fact, we will prove more precise results, namely, integral criteria for the lower functions of 
$M:=\big\{M(t)\big\}_{t\ge0}:=\big\{\sup_{0\le s\le t} |X(s)|\big\}_{t\ge0}$ at $t=0$ 
and at infinity, which imply the following Chung's  LILs of GFBM $X$.

\begin{theorem}\label{thm Chung origin} 
Let $X=\{X(t)\}_{t\ge0}$  be a GFBM with parameters $\alpha$ and $\gamma$. Suppose
 $\alpha\in (-1/2+\gamma, 1/2)$.
\begin{itemize}
\item[(a).]  There exists a positive constant $\kappa_1\in (0,\infty)$ such that 
\begin{equation}\label{eq 11}
\liminf_{t\rightarrow 0+}\sup_{0\le s\le t}\frac{ |X(s)|}{t^H /\left(\ln\ln t^{-1}\right)^{\alpha+1/2}}=\kappa_1, 
\ \ \ \hbox{ a.s.}
\end{equation} 

\item[(b).]  There exists a positive constant $\kappa_2\in (0,\infty)$ such that 
\begin{equation}\label{eq 21}
\liminf_{t\rightarrow\infty}\sup_{0\le s\le t}\frac{|X(s)|}{t^H /\left(\ln\ln t\right)^{\alpha+1/2}}=\kappa_2, 
\ \ \ a.s.
\end{equation}  
\end{itemize}
 \end{theorem}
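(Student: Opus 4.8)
My plan is to reduce everything to the small ball probability of $X$ combined with its $H$-self-similarity, and then run the two Borel--Cantelli estimates that underlie any Chung-type law, borrowing the decoupling idea from Talagrand's treatment of fractional Brownian motion. Throughout write $M(t)=\sup_{0\le s\le t}|X(s)|$ and, for part (a), $\psi(t)=t^{H}/(\ln\ln t^{-1})^{\alpha+1/2}$. Self-similarity gives $M(t)\overset{d}{=}t^{H}M(1)$, so $\bP(M(t)\le \lambda t^{H})=\bP(M(1)\le\lambda)$, and the required input (from the small ball analysis of \cite{WX2021}) is that for some constants $0<c_{1}\le c_{2}<\infty$,
\begin{equation*}
c_{1}\,\lambda^{-1/(\alpha+1/2)}\le -\ln \bP\big(M(1)\le\lambda\big)\le c_{2}\,\lambda^{-1/(\alpha+1/2)},\qquad \lambda\to0+ .
\end{equation*}
Taking $\lambda=\lambda(t)=\kappa\,(\ln\ln t^{-1})^{-(\alpha+1/2)}$, the exponent $\lambda^{-1/(\alpha+1/2)}$ becomes a constant multiple of $\ln\ln t^{-1}$, so $\bP\big(M(t)\le\kappa\,\psi(t)\big)$ decays like a power of $\ln t^{-1}$ whose exponent is comparable to $\kappa^{-1/(\alpha+1/2)}$. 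This is exactly the balance producing a finite positive constant: along $t_{n}=\theta^{n}$ the probabilities behave like $n^{-c\,\kappa^{-1/(\alpha+1/2)}}$, hence are summable for small $\kappa$ and non-summable for large $\kappa$.

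I would first record that the $\liminf$ in \eqref{eq 11} is almost surely a deterministic number. It is measurable with respect to the germ $\sigma$-field $\bigcap_{\varepsilon>0}\sigma\{X(s):0\le s\le\varepsilon\}$, and the ratio $M(t)/\psi(t)$ is asymptotically invariant under the scaling $t\mapsto\theta t$ (the factor $\theta^{H}$ cancels and $\ln\ln(\theta t)^{-1}\sim\ln\ln t^{-1}$ as $t\to0+$); a zero--one law then forces the $\liminf$ to equal a constant $\kappa_{1}\in[0,\infty]$, and it remains only to prove $0<\kappa_{1}<\infty$. The lower bound $\kappa_{1}>0$ is the easy half: with $\varphi=\kappa\psi$ and $\kappa$ small, the estimate above makes $\sum_{n}\bP\big(M(t_{n})\le\varphi(t_{n})\big)<\infty$, so by the first Borel--Cantelli lemma $M(t_{n})\ge\varphi(t_{n})$ for all large $n$ almost surely. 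Since $t\mapsto M(t)$ is nondecreasing, for $t\in[t_{n+1},t_{n}]$ one has $M(t)\ge M(t_{n+1})\ge\varphi(t_{n+1})$, and letting $\theta\uparrow1$ gives $\liminf_{t\to0+}M(t)/\psi(t)>0$.

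The upper bound $\kappa_{1}<\infty$ is the crux, and the main obstacle, since it needs the converse Borel--Cantelli lemma and hence (near-)independence of the events $\{M(t_{n})\le\varphi(t_{n})\}$. These events are strongly positively correlated: the kernel $(t-u)_{+}^{\alpha}-(-u)_{+}^{\alpha}$ together with the weight $|u|^{-\gamma}$ makes the increments of $X$ neither independent nor stationary, and each $M(t_{n})$ is a supremum taken from the origin, so it depends on the whole past of $B$. The plan is to pass to a rapidly decreasing subsequence $t_{n_{k}}$ and to split, via \eqref{eq X}, the restriction of $X$ to $[0,t_{n_{k}}]$ into the stochastic integral against $B$ over a neighbourhood of the time scale $t_{n_{k}}$ --- which controls $\sup_{[0,t_{n_{k}}]}|X|$ and depends on $B$ only locally, hence is independent across different $k$ --- plus a remainder driven by the far increments of $B$. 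One then shows that the supremum of the remainder on $[0,t_{n_{k}}]$ is, with overwhelming probability, negligible compared with $\psi(t_{n_{k}})$ (its standard deviation is $\ll\psi(t_{n_{k}})$ and it has Gaussian tails), so that the genuinely small ball events for the local parts, whose probabilities sum to $+\infty$ when $\kappa$ is large, force $M(t_{n_{k}})\le\varphi(t_{n_{k}})$ infinitely often. I expect the decisive technical point to be the decoupling estimate that bounds the long-range contribution of the singular weight $|u|^{-\gamma}$ across well-separated scales; this is exactly where Talagrand's truncation for fractional Brownian motion must be reworked, and where the restrictions $\gamma\in(0,1/2)$ and $\alpha\in(-1/2+\gamma,1/2)$ are used to keep the remainder under control.

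Finally, part (b) follows the same route along $t_{n}=\theta^{n}\uparrow\infty$ with $\psi(t)=t^{H}/(\ln\ln t)^{\alpha+1/2}$. Because $H$-self-similarity makes $\bP(M(1)\le\lambda)$ scale-free, the small ball balance, the first Borel--Cantelli lower bound with monotone interpolation, and the decoupling for the converse Borel--Cantelli upper bound all transfer verbatim; the corresponding zero--one law (now for the behaviour as $t\to\infty$) then yields a deterministic constant $\kappa_{2}\in(0,\infty)$, completing the proof.
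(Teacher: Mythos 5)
Your overall architecture --- small ball estimates plus self-similarity, first Borel--Cantelli with monotone interpolation for the lower bound, a decoupling of the moving-average representation plus a converse Borel--Cantelli for the upper bound, and a zero--one law to turn two-sided bounds into an exact constant --- matches the paper's (which packages the Borel--Cantelli work into Talagrand-style integral tests, Theorems \ref{thm main 0} and \ref{thm main infty}), and your lower-bound half is correct as written. But two steps have genuine gaps. The first is the zero--one law: measurability with respect to the germ $\sigma$-field together with asymptotic invariance of $M(t)(\ln\ln t^{-1})^{\beta}/t^{H}$ under $t\mapsto\theta t$ does not by itself make the $\liminf$ constant. $X$ is not Markov, so there is no Blumenthal-type law, and germ $\sigma$-fields of Gaussian processes need not be trivial. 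What is actually needed is ergodicity of the scaling transformations $S_a(X)=\{a^{-H}X(at)\}_{t\ge 0}$, and for GFBM this must be \emph{verified}: the paper does so by showing the Lamperti transform $\{\e^{-Ht}X(\e^{t})\}$ has exponentially decaying autocovariance, hence a continuous spectral density, hence is strong mixing by Maruyama's theorem, after which \cite[Proposition 3.3]{TX2007} applies. Without this input your argument yields only $0<c\le\liminf\le C<\infty$ a.s., not the asserted constant $\kappa_1$.

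The second gap is in the upper bound, which you rightly call the crux. ``Pass to a rapidly decreasing subsequence'' hides the quantitative tension that drives the whole proof: along the LIL function, $\mathbb P\big(M(t)\le \kappa t^{H}/(\ln\ln t^{-1})^{\beta}\big)$ is comparable to a power of $(\ln t^{-1})^{-1}$ with exponent of order $\kappa^{-1/\beta}$, so if $t_{n_k}$ decreases fast enough to make your ``local'' integrals essentially independent in the naive sense (say $t_{n_k}=\exp(-\e^{k})$), these probabilities are summable for \emph{every} $\kappa$ and the second Borel--Cantelli lemma yields nothing; if instead $t_{n_k}$ is geometric, the sum diverges for large $\kappa$ but there is no decoupling. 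One must calibrate the separation (e.g.\ $t_{n_k}=\exp(-k^{b})$ with $b>1$ can be made to work) and, since nested windows $\{|x|\le v_k\}$ are \emph{not} independent across $k$, either use disjoint annuli together with Anderson's inequality and careful remainder bounds, or do what the paper does: prove a pairwise quasi-independence estimate (Proposition \ref{prop mtu1}, obtained by splitting $B(dx)$ at $|x|=\sqrt{ut}$ and invoking the entropy bounds of Lemmas \ref{lem diam} and \ref{lem covering number}, where the singularity of the weight $|u|^{-\gamma}$ at the origin is the real difficulty) and feed it into the generalized Borel--Cantelli lemma (Lemma \ref{lem GBC}) with the combinatorial bookkeeping of Section \ref{Sec Neces}. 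As it stands, your plan would stall exactly at the choice of subsequence and at the unjustified independence claim.
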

   
Similarly to the theorems of Ichiba, Pang and Taqqu \cite{IPT2020b} mentioned above, the self-similarity index 
$H$ plays an essential role in (\ref{eq 11}) and  (\ref{eq 21}). The results in \cite{IPT2020b,WX2021} and the 
present paper show that GFBM $X$ is an interesting example of self-similar Gaussian processes which has 
richer sample path properties than the ordinary FBM and its close relatives such as the Riemann-Liouville 
FBM (cf. e.g., \cite{CLRS11,ElN2011}), bifractional Brownian motion (cf. \cite{HV03,   LeiN09, RussoTudor, TX2007}), 
and the sub-fractional Brownian motion (cf. \cite{BGT04, ElN2012, Tudor07,YanShen10}). In this sense, GFBM
is a good object (see also \cite{TT18} for other examples related to stochastic partial differential 
equations driven by a fractional-colored Gaussian noise) that can be studied for the purpose to develop 
a general theoretical framework for studying the fine properties of all (or at least a wide class of) self-similar 
Gaussian processes which, to the best of our knowledge, is still not complete yet.

In the literature, limit theorems of the forms (\ref{eq 11})  and  (\ref{eq 21}) are also called ``the other law of 
the iterated logarithm" and there has been a long history of studying them. 
Chung \cite{Chung}  proved that if $S_n = 
\eta_1 +\cdots +\eta_n$, where $\{\eta_k\}$ is a sequence of i.i.d. random variables with 
mean 0, variance 1 and finite third moment, then  
 $$
 \liminf_{n\rightarrow\infty}\frac{\max_{1\le k\le n} |S_k|}{\sqrt{n/\ln\ln n}} =\frac{\pi}{\sqrt 8}, \ \ \ \text{a.s.}
 $$
 Chung \cite{Chung} also gave the corresponding large time result for Brownian motion.  The extra condition of 
 finite third moment on $\eta_1$ in \cite{Chung} was removed by Jain and Pruitt \cite{JP75}. There have been many 
 extensions of these results. For example, Cs\'aki  gave a converse of lower/upper class in \cite{Cs1978}, and he 
 found an interesting connection between Chung's ``other law" for Brownian motion and Strassen's LIL in \cite{Cs1980}. 
Kuelbs et al \cite{KLT1994} studied Chung's functional LIL for Banach space-valued Gaussian random vectors. Their 
results are applicable to Brownian motion and  provide interesting refinements to those in \cite{Cs1980}. Monrad 
and Rootz\'en \cite{MR1995}  proved Chung's LIL for a large class of Gaussian processes that have the property 
of strong local nondeterminism. Li and Shao \cite {LiShao01} and Xiao \cite{Xiao1997} extended the Chung's LIL in 
\cite{MR1995} to Gaussian random fields with stationary increments.  Chung's LILs have also been studied for 
non-Gaussian processes, we refer to Buchmann and Maller \cite{BM} and the references therein for more information.
  
\begin{remark}  {\rm The following are some remarks about Chung's LILs in Theorem  \ref{thm Chung origin}.

(i). Notice that the cases of $\alpha = 1/2$ and $\alpha\in (1/2, 1/2+\gamma)$ are excluded in Theorem 
\ref{thm Chung origin}. In the first case, the sample functions of $X$ are not differentiable, while in the second 
case the sample functions of $X$ are differentiable on $(0, \infty)$. In both cases, we have not be able to 
solve the problem whether  (\ref{eq 11})  and  (\ref{eq 21}) hold or not because the optimal small ball 
probability estimates for $\max_{t\in[0, 1]} |X(t)|$ have not been established for GFBM $X$ yet. See
 \cite[Remark 5.1]{WX2021} for more information. 

It is worth mentioning that, when $\alpha\in (1/2, 1/2+\gamma)$, $X$ has a modification that is continuously 
differentiable and its derivative $X'$ is a self-similar process with index $H' = \alpha-\gamma- 1/2 < 0$. The 
exact uniform modulus of continuity on $[a, b]$ with $0<a <b<\infty$ and Chung's LIL at any fixed $t>0$ have 
been proved for $X'$ in \cite{WX2021}. However, the asymptotic properties of $X'$ at $t = 0$ or at infinity 
have not been studied. Since $H' <0$, it is expected that $X'(t) \to \infty$ as $t \to 0+$ and $X'(t) \to 0$ as 
$t \to \infty$. It would be interesting from the viewpoint of the aforementioned general theoretical framework 
for self-similar Gaussian processes to prove both ordinary LIL and Chung's LIL of $X'$ at $t= 0$ and at $\infty$.

(ii). To prove Theorem  \ref{thm Chung origin}, we modify the argument of Talagrand \cite{Tal96}, which is 
concerned with the lower functions of FBM at $\infty$, to establish integral criteria for the lower functions of 
GFBM $X$ at $t= 0$ and at infinity. We remark that, in \cite{ElN2011, ElN2012}, El-Nouty has extended 
Talagrand's result to the Riemann-Liouville FBM and the sub-FBM to characterize their lower functions at 
$\infty$. 

For studying the lower functions of $M=\big\{\sup_{0\le s\le t} |X(s)|\big\}_{t\ge0}$ at $t = 0$ in this paper, 
the main difficulty comes from the singularity of the second moment of the increment of $X$ at $t = 0$. 
To elaborate, we consider a decomposition of GFBM:
 \begin{equation}\label{eq decom}
 \begin{split}
  X(t) &=\int_{-\infty}^0  \big((t-u)^{\alpha}-(-u)^{\alpha} \big) (-u)^{-\gamma} B(du)  +\int_0^t  (t-u)^{\alpha}   
  u^{-\gamma} B(du)\\
  &=: Y(t) + Z(t).
\end{split}
\end{equation}
The Gaussian processes   $Y=\big\{Y(t)\big\}_{t\ge0}$ and  $Z=\big\{Z(t)\big\}_{ t\ge0}$ are independent. The 
process $Z$ in \eqref{eq decom} is called a {\it generalized Riemann-Liouville FBM}, using the terminology 
of Ichiba, Pang and Taqqu \cite{IPT2020}.  By \cite[Lemmas 2.1 and 3.1]{WX2021}, 
there exist positive constants $c_{1, i}$, $i=3,\cdots, 6$, such that   for all $0 < s < t$, 
\begin{align}\label{Eq: Ymoment1}
c_{1,3}\frac{|t-s|^2}{t^{2-2H}}\le     \mathbb E\left[\big(Y(t)-Y(s) \big)^2\right]\le c_{1,4}\frac{|t-s|^2}{s^{2-2H}}.
 \end{align}  
 and 
 \begin{align}\label{Eq: Zmoment1}
c_{1,5}\frac{|t-s|^{2\alpha+1}}{t^{2\gamma}}\le     \mathbb E\left[\big(Z(t)-Z(s) \big)^2\right]
\le c_{1,6}\frac{|t-s|^{2\alpha+1}}{s^{2\gamma}}.
 \end{align} 
These bounds are optimal when $s \le t \le c_{1,7}s$ for any constant $c_{1,7}>1$. Therefore, 
the small values of $s$ have some effects on $Y$ and $Z$ when $H<1$ and $\gamma>0$, 
respectively. 
The singularity at $s=0$ brings two technical difficulties when we modify the approach in Talagrand 
\cite{Tal96}: one is in   estimating the metric entropy 
 for proving Proposition \ref{prop mtu1}; the other is in constructing of the sequences in Section 
 \ref{subsect tn} in order to use the (generalized) Borel-Cantelli lemma.

(iii).  From the proofs of Theorems \ref{thm Chung origin} and \ref{thm main 0}, we can 
verify that the conclusions of Theorem \ref{thm Chung origin} also hold for the generalized 
Riemann-Liouville FBM $Z$ in \eqref{eq decom}. For example,   when $\alpha\in (-1/2+\gamma, 1/2)$,
\begin{equation}\label{eq Z11}
\liminf_{t\rightarrow 0+}\sup_{0\le s\le t}\frac{ |Z(s)|}{t^H /\left(\ln\ln t^{-1}\right)^{\alpha+1/2}}=c_{1,8}\in (0,\infty) 
\ \ \ a.s.
\end{equation}  
For the process $Y$  defined in \eqref{eq decom},  
when $\alpha\in (-1/2+\gamma, 1/2)$
  we obtain  that by \eqref{eq 11} and \eqref{eq Z11},    
\begin{equation}\label{eq Y11}
\liminf_{t\rightarrow 0+}\sup_{0\le s\le t}\frac{ |Y(s)|}{t^H /\left(\ln\ln t^{-1}\right)^{\alpha+1/2}}=c_{1,9}\in [0,\infty)
\ \ \ a.s.
\end{equation}    
Since an optimal upper bound for the small ball probability estimates has not been established 
for $Y$  yet (see  \cite[Lemma 7.1]{WX2021}),  we are not able to decide if $c_{1,9} > 0$ or $c_{1,9} = 0$. 
 
 }
\end{remark}

The rest of this paper is organized as follows. In Section 2, we state Theorems \ref{thm main 0} and 
\ref{thm main infty} which provide integral criteria for the lower functions of $M$ at $t=0$ and at infinity.  
From these integral criteria, we derive Chung's LILs for $X$ in Theorem  \ref{thm Chung origin}. 
 
Section \ref{sec:prelim} contains some preliminary results. In Section 4 and Section 5, we prove 
Theorems \ref{thm main 0} and \ref{thm main infty}, respectively.   
  
\section{Main results and Proof of Theorem  \ref{thm Chung origin} }

The following definition of the lower classes for the process $M=\big\{M(t)\big\}_{t\ge0}$ is adapted from  
\cite{Revesz}. This book  provides a systematic and extensive account on the studies of lower 
and upper classes for Brownian motion, random walks and their functionals.
 
 \begin{definition}\label{Def:LL}
 \begin{itemize}
 \item[(a).] A function $f(t), t>0$, belongs to the lower-lower class of the process $M$ at  $\infty$ 
 (resp. at $0$), denoted by $f\in LLC_{\infty}(M)$ (resp. $f\in LLC_{0}(M)$), if for almost all 
 $\omega\in \Omega$ there exists $t_0=t_0(\omega)$ such that $M(t)\ge f(t)$ for every $t>t_0$ 
 (resp. $t<t_0$).
 
\item[(b).]\, A function $f(t), t>0$, belongs to the lower-upper class of the process $M$ at  $\infty$ 
(resp. at $0$), denoted by $f\in LUC_{\infty}(M)$ (resp. $f\in LUC_{0}(M)$), if for  almost all $\omega\in 
\Omega$ there exists a sequence $0<t_1(\omega)<t_2(\omega)<\cdots<$ with $t_n(\omega)\uparrow\infty$ 
(resp.  $t_1(\omega) > t_2(\omega)>\cdots>$ with $t_n(\omega)\downarrow0$), as $n\rightarrow \infty$, 
such that $M(t_n(\omega)) \le f(t_n(\omega)), n\in \mathbb N$. 
\end{itemize}
\end{definition}

Since in the present paper, $M(t)=\sup_{0\le s\le t} |X(s)|$, we will also write the lower classes in 
Definition \ref{Def:LL} as $LLC_{0}(X)$ and $LUC_{0}(X)$ and call a function $f \in LLC_{0}(M)$ 
(resp. $f \in LUC_{0}(M)$) a lower-lower (resp. lower-upper) function of $X$ at $t = 0$. It is known from 
Talagrand \cite{Tal96} that small ball probability estimates are essential for studying the lower classes of a 
stochastic process.  

By the self-similarity of GFBM $X$, we have
\begin{equation}\label{eq M1}
\mathbb P\big(M(t)\le \theta t^H \big)=\mathbb P\big(M(1)\le \theta\big)=:\varphi(\theta).
\end{equation}
Wang and Xiao \cite{WX2021} proved the following small ball probability estimates for GFBM: 
If $\alpha\in (-1/2+\gamma, \,1/2)$, then there exist    constants $\kappa_3>
 \kappa_4>0$ such that for all $t>0$ and $ 0<\theta<1$, 
 \begin{align*}\label{eq small X}
\exp\bigg(- \kappa_{3}  \Big(\frac{t^H}{\theta}\Big)^{\frac{1}{\beta}} \bigg)\le 
 \mathbb P\bigg\{\sup_{s\in [0,t]} |X(s)|\le  \theta \bigg\} 
 \le  \exp\bigg(- \kappa_4 \Big(\frac{t^H}{\theta}\Big)^{\frac{1}{\beta}} \bigg).
 \end{align*} 
Here and in the sequel, $\beta =\alpha+1/2$. It follows that  for any $\theta\in(0,1)$, 
 \begin{equation}\label{eq varphi1}
\exp\Big(- \kappa_{3}  \theta^{-\frac{1}{\beta}} \Big)\le \varphi(\theta)\le 
\exp\Big(- \kappa_{4}  \theta^{-\frac{1}{\beta}} \Big).
\end{equation}
  
Now we state our first main result, which gives an integral criterion for the lower functions of 
GFBM $X$ at $t = 0$.  It shows that, besides $\beta =\alpha+1/2$, the self-similarity index 
$H = \alpha-\gamma+\frac12$ plays an essential role. 
\begin{theorem}\label{thm main 0} Assume $\alpha\in (-1/2+\gamma, \,1/2)$.   
Let $\xi:(0,\e^{-\e}]
\rightarrow (0,\infty)$ be a nondecreasing continuous function.
\begin{itemize}
\item[(a)] (Sufficiency).  If   
\begin{equation}\label{eq 01}
\frac{\xi(t)}{t^H}\  \text{ is bounded  and}\  I_0(\xi):= \int_0^{\e^{-\e}}  \left(\frac{\xi(t)}{t^H}\right)^{-1/\beta} 
\varphi\left(\frac{\xi(t)}{t^H}\right)\frac{dt}{t}<+\infty,
\end{equation}
then $\xi\in LLC_0(X)$.
\item[(b)](Necessity). Conversely, if  $\frac{\xi(t)}{t^{(1+\varepsilon_0)H}}$ is non-increasing for some constant 
$\varepsilon_0>0$ and if  $\xi\in LLC_0(X)$,  then \eqref{eq 01} holds.     
\end{itemize}
\end{theorem}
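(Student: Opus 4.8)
The plan is to prove both halves by the Borel--Cantelli method along a deterministic sequence $t_n\downarrow 0$, constructed as in Section~\ref{subsect tn}, exploiting the self-similarity identity \eqref{eq M1} to rewrite every probability $\mathbb P\big(M(t)\le\theta t^H\big)$ as the small ball function $\varphi(\theta)$, and then the two-sided estimate \eqref{eq varphi1} to relate $\varphi$ to the integrand of $I_0(\xi)$. Throughout set $g(t):=\xi(t)/t^H$, so that \eqref{eq M1} gives $\mathbb P\big(M(t_n)\le\xi(t_n)\big)=\varphi(g(t_n))$ and the condition \eqref{eq 01} reads $I_0(\xi)=\int_0^{\e^{-\e}} g(t)^{-1/\beta}\varphi(g(t))\,\tfrac{dt}{t}<\infty$. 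The extra factor $g^{-1/\beta}$ appears because, to turn the series $\sum_n\varphi(g(t_n))$ into this integral, the correct logarithmic spacing is $\log(t_n/t_{n+1})\asymp g(t_n)^{1/\beta}$; both the boundedness hypothesis in (a) and the monotonicity hypothesis on $\xi(t)/t^{(1+\varepsilon_0)H}$ in (b) force $g(t)\to0$ as $t\to0$, so this spacing tends to $0$ and the endpoints satisfy $t_{n+1}/t_n\to1$.

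For the sufficiency in (a), I would first use monotonicity: since $M$ and $\xi$ are both nondecreasing, for $t\in(t_{n+1},t_n]$ one has $M(t)\ge M(t_{n+1})$ and $\xi(t)\le\xi(t_n)$, so $\xi\in LLC_0(X)$ follows once the events $A_n:=\{M(t_{n+1})<\xi(t_n)\}$ are shown to occur only finitely often. With the spacing above, \eqref{eq M1} gives $\mathbb P(A_n)=\varphi\big(g(t_n)(t_n/t_{n+1})^{H}\big)$, and the point is to absorb the mild inflation of the argument caused by $(t_n/t_{n+1})^{H}\to 1$. This is where I would invoke the uniform modulus/fluctuation estimate of Proposition~\ref{prop mtu1}, which controls the oscillation of $X$ on $[t_{n+1},t_n]$ at the scale $t_n^{H}$ and lets one compare $\varphi$ at matching arguments, yielding $\mathbb P(A_n)\lesssim g(t_n)^{-1/\beta}\varphi(g(t_n))\,\log(t_n/t_{n+1})\lesssim\int_{t_{n+1}}^{t_n} g(s)^{-1/\beta}\varphi(g(s))\,\tfrac{ds}{s}$. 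Summing gives $\sum_n\mathbb P(A_n)\lesssim I_0(\xi)<\infty$, and the first Borel--Cantelli lemma finishes part (a).

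For the necessity in (b), I would argue by contraposition: assuming $I_0(\xi)=\infty$, I want a sequence $t_n\downarrow0$ with $M(t_n)<\xi(t_n)$ infinitely often, which shows $\xi\notin LLC_0(X)$. Using the monotonicity of $\xi(t)/t^{(1+\varepsilon_0)H}$ to pass between integral and series with the same spacing, the sequence of Section~\ref{subsect tn} is arranged so that $\sum_n\mathbb P(E_n)=\infty$, where $E_n:=\{M(t_n)<\xi(t_n)\}$ has $\mathbb P(E_n)=\varphi(g(t_n))$ by \eqref{eq M1}. Since the $E_n$ are strongly dependent --- each concerns $\sup_{[0,t_n]}|X|$ over the nested intervals $[0,t_n]$ --- I would use the generalized (second) Borel--Cantelli lemma, which requires the pairwise bound $\mathbb P(E_m\cap E_n)\lesssim\mathbb P(E_m)\mathbb P(E_n)$ for $m<n$. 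To obtain it I would condition on the $\sigma$-field generated by $B$ up to $t_n$, write $E_m\cap E_n=E_n\cap\{\sup_{[t_n,t_m]}|X|<\xi(t_m)\}$, and use the decomposition \eqref{eq decom} together with Anderson's inequality: the increment of $X$ on $[t_n,t_m]$ has a component driven by $B$ on $[t_n,t_m]$ that is independent of the past, so the conditional small ball probability is bounded by an unconditional one comparable to $\mathbb P(E_m)$, provided the sequence decreases rapidly enough that $t_n\ll t_m$. A zero--one law then upgrades $\mathbb P(\limsup_n E_n)>0$ to probability one.

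I expect the correlation estimate in (b), together with the accompanying construction of $\{t_n\}$, to be the main obstacle, and both difficulties originate in the singularity of the increment variances at $t=0$ recorded in \eqref{Eq: Ymoment1}--\eqref{Eq: Zmoment1}. One must choose $\{t_n\}$ decreasing fast enough that the contribution of $X$ on the inner interval $[0,t_n]$ to $E_m$ becomes negligible and the independent increment dominates, yet slowly enough that $\sum_n\mathbb P(E_n)$ still diverges; the competition between these requirements is delicate precisely because the factors $s^{-2\gamma}$ and $s^{-(2-2H)}$ blow up near the origin. The same singularity forces the careful metric-entropy computation behind Proposition~\ref{prop mtu1} used in (a). By contrast, once the spacing and these two ingredients are in place, the sum-to-integral comparisons and the two Borel--Cantelli applications are routine.
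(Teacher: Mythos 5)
Your skeleton (reduce every probability to $\varphi$ via \eqref{eq M1}, first Borel--Cantelli along a recursively spaced sequence for sufficiency, generalized Borel--Cantelli with correlation estimates for necessity) is the paper's skeleton, but both halves contain steps that fail as written. On the sufficiency side: (i) you invoke Proposition \ref{prop mtu1} to ``compare $\varphi$ at matching arguments'', but that proposition is a decorrelation bound for the joint law of $M(t)$ and $M(u)$ and plays no role in the sufficiency; the comparison of $\varphi$ at nearby arguments is Lemma \ref{lem dom} (with Lemma \ref{lem increase}), i.e.\ Borell convexity through \eqref{eq psi'}. (ii) Your displayed chain $\mathbb P(A_n)\lesssim g(t_n)^{-1/\beta}\varphi(g(t_n))\log(t_n/t_{n+1})\lesssim\int_{t_{n+1}}^{t_n}g(s)^{-1/\beta}\varphi(g(s))\,ds/s$ (with $g=\xi/t^H$) is false when $\xi$ rises steeply just below $t_n$: the spacing \eqref{eq tn+10} gives $\log(t_n/t_{n+1})\asymp g(t_{n+1})^{1/\beta}$, not $g(t_n)^{1/\beta}$, and since the integrand is increasing in $g(s)$ and $g(s)$ can be far smaller than $g(t_n)$ on most of $[t_{n+1},t_n]$, both inequalities break. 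The correct matching --- which is what the paper's Lemma \ref{lem finite}(a) does --- pairs $\varphi(g(t_n))$ with the integral over the interval \emph{above} $t_n$, namely $\int_{t_n}^{t_{n-1}}$, where $\xi(s)\ge \xi(t_n)$ and the spacing is governed by the lower endpoint. With that re-indexing your events $A_n=\{M(t_{n+1})<\xi(t_n)\}$ do work (and would even spare the paper's two-rule construction $t_{n+1}=\max\{u_{n+1},v_{n+1}\}$ with \eqref{eq vn+11}, which the paper needs for its bridging Lemma \ref{lem Mtn0}), but the estimates must be run with Lemmas \ref{lem dom} and \ref{lem increase}, not with Proposition \ref{prop mtu1}.

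The necessity is where the genuine gap lies. Your plan rests on proving $\mathbb P(E_m\cap E_n)\lesssim\mathbb P(E_m)\mathbb P(E_n)$ for \emph{all} $m<n$ by conditioning on $\sigma\{B(s):s\le t_n\}$ and Anderson's inequality. First, for nearby pairs this product bound is simply false: along \eqref{eq tn+10} one has $t_{n}/t_{n+1}\to1$, the two small-ball events are positively correlated and nearly coincide, so $\mathbb P(E_m\cap E_n)\approx\mathbb P(E_n)\gg\mathbb P(E_m)\mathbb P(E_n)$. This is exactly why Lemma \ref{lem GBC} allows the additive term $K_5\,\mathbb P(A_i)$, and why the paper splits the pairs into $J'$, $J_k$, $J''$ and runs the combinatorial refinement of Proposition \ref{Prop con2} (levels $I_k$, thresholds $N_k$, double-exponential separation). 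Second, even for well-separated pairs, conditioning plus Anderson yields only the one-sided bound \eqref{eq Mtu}, i.e.\ $\mathbb P(E_m\cap E_n)\le 2\,\mathbb P(E_n)\exp\big(-c\,g(t_m)^{-1/\beta}\big)$; since the constant $c$ obtained this way is in general smaller than the constant $\kappa_3$ in the lower bound of \eqref{eq varphi1}, the factor $\exp\big(-c\,g(t_m)^{-1/\beta}\big)$ is \emph{not} bounded by a multiple of $\varphi(g(t_m))=\mathbb P(E_m)$. The product structure $\varphi(\theta)\varphi(\eta)$ requires genuinely independent contributions, which the paper manufactures by cutting the integrand of \eqref{eq X} at $|x|=\sqrt{t_n t_m}$ in Proposition \ref{prop mtu1}, controlled by the diameter bounds of Lemma \ref{lem diam} and the entropy estimates of Lemma \ref{lem covering number} that handle the singularity at $0$; your proposal has no substitute for this. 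Finally, your claim that the monotonicity of $\xi(t)/t^{(1+\varepsilon_0)H}$ forces $g(t)\to0$ is false ($\xi(t)=t^H$ is a counterexample); in the paper, $g(t)\to 0$ is deduced from the LLC hypothesis itself by a probabilistic argument (Lemma \ref{lem limit00}, which again uses Proposition \ref{prop mtu1} and Lemma \ref{lem GBC}), and your contrapositive setup cannot start its construction without it.
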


Theorem \ref{thm main infty} is an analogous result for GFBM $X$ at infinity.

\begin{theorem}\label{thm main infty} Assume $\alpha\in (-1/2+\gamma, \,1/2)$.   
Let $\xi:[\e^{\e},\infty) \rightarrow (0,\infty)$ be a nondecreasing continuous function. Then  
$\xi\in LLC_{\infty}(X)$ if and only if 
\begin{equation}\label{eq infty}
\frac{\xi(t)}{t^H}\  \text{ is bounded  and}\  I_{\infty}(\xi):=\int_{\e^\e}^{\infty}  \left(\frac{\xi(t)}{t^H}\right)^{-1/\beta} 
\varphi\left(\frac{\xi(t)}{t^H}\right)\frac{dt}{t}<+\infty.
\end{equation}
\end{theorem}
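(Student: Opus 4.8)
The plan is to follow the Talagrand-type scheme, proving the two implications separately and reducing everything to the small-ball function $\varphi$ via self-similarity. Writing $g(t):=\xi(t)/t^H$, the identity \eqref{eq M1} gives $\mathbb P(M(t)\le\xi(t))=\varphi(g(t))$, so that $I_\infty(\xi)=\int_{\e^\e}^\infty g(t)^{-1/\beta}\varphi(g(t))\,\frac{dt}{t}$. By Definition \ref{Def:LL}, $\xi\in LLC_\infty(X)$ is equivalent to $\mathbb P\big(M(t)<\xi(t)\ \text{for arbitrarily large } t\big)=0$. Because $M$ is nondecreasing and, for $t>T$, $M(t)=\max\{M(T),\,\sup_{T\le s\le t}|X(s)|\}$ with $M(T)$ negligible against $\xi(t)\asymp t^Hg(t)$ once $t\gg T$, this event agrees, off a null set, with a tail event and obeys a zero–one law; it therefore suffices to treat the convergent and divergent cases of $I_\infty$ and to identify the threshold. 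I would first record that boundedness of $g$ is necessary for $\xi\in LLC_\infty(X)$: if $g$ is unbounded, choose $t_k\uparrow\infty$ with $g(t_k)\to\infty$, so that $\mathbb P(M(t_k)<\xi(t_k))=\varphi(g(t_k))\to1$; thinning $\{t_k\}$ to a lacunary sequence and using the quasi-independence coming from the independent decomposition \eqref{eq decom} then forces $M(t_k)<\xi(t_k)$ infinitely often, i.e. $\xi\in LUC_\infty(X)$. Hence under either hypothesis we may assume $g$ bounded.

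For the sufficiency direction I would use the first Borel–Cantelli lemma along a carefully chosen skeleton $\{t_n\}$, exactly as in Section \ref{subsect tn}. The monotonicity of both $M$ and $\xi$ gives the inclusion $\{\exists\,t\in[t_n,t_{n+1}]:M(t)<\xi(t)\}\subseteq\{M(t_n)<\xi(t_{n+1})\}$, whose probability equals $\varphi\big(\xi(t_{n+1})/t_n^H\big)$. The crux is to choose $t_n$ so that this discrete sum is comparable to $I_\infty$: taking the logarithmic spacing $\ln(t_{n+1}/t_n)\asymp g(t_n)^{1/\beta}$ makes the number of skeleton points in each octave $[2^k,2^{k+1}]$ of order $g^{-1/\beta}$, matching the weight $g^{-1/\beta}$ in the integrand, while keeping $\xi(t_{n+1})/t_n^H=\big(1+O(g(t_n)^{1/\beta})\big)g(t_n)$ so close to $g(t_n)$ that $\varphi\big(\xi(t_{n+1})/t_n^H\big)\asymp\varphi(g(t_n))$. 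This last comparison is where a regularity property of the true function $\varphi$ (beyond the crude two-sided bounds \eqref{eq varphi1}, which by themselves do not control $\varphi$ at nearby arguments) is needed, and where I would invoke the preliminary small-ball estimates; with it in hand, $\sum_n\varphi(\xi(t_{n+1})/t_n^H)\asymp I_\infty(\xi)<\infty$, and Borel–Cantelli yields $M(t)\ge\xi(t)$ for all large $t$, i.e. $\xi\in LLC_\infty(X)$.

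For the necessity I argue the contrapositive: assuming $g$ bounded and $I_\infty(\xi)=\infty$, I must construct a.s. times $t_n\uparrow\infty$ with $M(t_n)\le\xi(t_n)$. Using the same adapted skeleton, divergence of $I_\infty$ becomes divergence of a series of probabilities of block events of the type $\{\sup_{t_n\le s\le t_{n+1}}|X(s)|\le\xi(t_{n+1})\}$, each of which is bounded below, uniformly in $n$, by a small-ball lower bound obtained from the metric-entropy estimate of Proposition \ref{prop mtu1}. Since these events overlap (the constraint $M(t)\le\xi(t)$ involves $X$ on all of $[0,t]$, and GFBM is long-range dependent), they are only asymptotically independent; I would condition successively and use the strong local nondeterminism supplied by the independent decomposition \eqref{eq decom} to bound the conditional block probabilities $\mathbb P(\cdot\mid\mathcal F_{t_n})$ from below, and then feed the resulting estimates into a generalized (second) Borel–Cantelli lemma. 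Combined with the zero–one law above, this produces infinitely many $t_n$ with $M(t_n)\le\xi(t_n)$, so $\xi\in LUC_\infty(X)$, completing the contrapositive.

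The \emph{main obstacle} is the necessity direction, and within it the twin difficulties flagged for the origin in the introduction: the uniform lower bound via metric entropy (Proposition \ref{prop mtu1}) and the construction of the skeleton $\{t_n\}$ compatible with both the generalized Borel–Cantelli lemma and the integral $I_\infty$. The delicate point throughout is that the sum–integral comparison and the quasi-independence must be arranged simultaneously on the same skeleton; the logarithmic spacing $\asymp g^{1/\beta}$ is what reconciles the per-block small-ball probability (stable under this spacing) with the $g^{-1/\beta}$ block count demanded by the integrand. I expect the infinity regime to be genuinely easier than Theorem \ref{thm main 0}, because the second-moment bounds \eqref{Eq: Ymoment1}–\eqref{Eq: Zmoment1} are non-singular for large $s$ (the factors $s^{-(2-2H)}$ and $s^{-2\gamma}$ being harmless there), so that both the entropy estimates and the sequence construction should go through with routine modifications of the arguments used at $t=0$.
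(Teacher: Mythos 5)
Your sufficiency sketch follows essentially the paper's route (a skeleton with spacing $t_{n+1}-t_n\asymp t_n\,g(t_n)^{1/\beta}$, monotonicity of $M$ and $\xi$, and the first Borel--Cantelli lemma, with Lemma \ref{lem dom} supplying the regularity of $\varphi$ you correctly identify as necessary). One caveat: your claim $\xi(t_{n+1})/t_n^H=\bigl(1+O(g(t_n)^{1/\beta})\bigr)g(t_n)$ is false in general, because $\xi$ is only assumed monotone and continuous and may grow by a large factor across a single block; what is true is $\xi(t_{n+1})/t_n^H=\bigl(1+O(g(t_n)^{1/\beta})\bigr)g(t_{n+1})$, after which the $n$-th summand must be matched against the integral over the \emph{next} block. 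The paper avoids this issue differently, by the two-case construction \eqref{eq tn infty0}--\eqref{eq tn infty2}, where $v_{n+1}$ caps the growth of $\xi$ and a dichotomy as in Lemma \ref{lem finite} (fast-growth steps decay geometrically, spacing-limited steps are absorbed into $I_\infty$) yields convergence of the series.

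The genuine gap is in the necessity direction, precisely the part you flag as the main obstacle. Your mechanism --- lower bounds on conditional probabilities $\mathbb{P}(\,\cdot\mid\mathcal{F}_{t_n})$ fed into a second (conditional) Borel--Cantelli lemma --- cannot work here. The events that matter are $A_n=\{M(t_n)\le\xi(t_n)\}$, and since $M(t_n)\ge M(t_{n-1})$ one has $\mathbb{P}(A_n\mid\mathcal{F}_{t_{n-1}})\le \mathbf{1}\{M(t_{n-1})\le\xi(t_n)\}$, which vanishes off an event that is essentially $A_{n-1}$ itself; so the a.s.\ divergence of $\sum_n\mathbb{P}(A_n\mid\mathcal{F}_{t_{n-1}})$, required by L\'evy's extension of Borel--Cantelli, is exactly what one cannot establish (the argument is circular). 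Working instead with block events $\{\sup_{t_n\le s\le t_{n+1}}|X(s)|\le\xi(t_{n+1})\}$ does not help: their occurrence infinitely often says nothing about $M(t_n)\le\xi(t_n)$, since $M$ involves all of $[0,t_n]$. The paper's mechanism is the opposite one: \emph{upper} bounds on pairwise joint probabilities $\mathbb{P}(A_n\cap A_m)$ --- Lemma \eqref{eq Mtu} (Anderson's inequality plus independence of white-noise increments) for nearby pairs, and Proposition \ref{prop mtu1} for distant pairs, the latter proved via the independent splitting $X=X_1+X_2$ of \eqref{eq X12} at the spatial scale $v=\sqrt{ut}$ (not the decomposition \eqref{eq decom}, and note that Proposition \ref{prop mtu1} is a quasi-independence upper bound, not the ``small-ball lower bound'' you describe) --- fed into Talagrand's generalized Borel--Cantelli Lemma \ref{lem GBC}, which needs only pairwise control. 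Making the hypothesis of Lemma \ref{lem GBC} hold is the real work, and it is absent from your proposal: one must construct the sequence of Lemma \ref{lem tninfty}, refine it to a subset $J$ with the doubly exponential separation \eqref{eq tmn exp} (Lemma \ref{Prop con22}), and split the pairwise sums into the three regimes $J'$, $J_k$, $J''$ as in Section \ref{Sec Neces}. Without this combinatorial selection the pairwise bounds are not summable in the required way, and no appeal to ``strong local nondeterminism'' (which is neither established nor used for GFBM in this paper) substitutes for it.
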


The proofs of Theorems \ref{thm main 0} and  \ref{thm main infty} will be given in Sections 
4 and 5 below. 
First, let us apply them to prove Theorem   \ref{thm Chung origin}.

 We will make use of the following zero-one laws at $t=0$ and $ \infty$. Eq.  \eqref{Eq:Chung01law} 
 follows from \cite[Proposition 3.3]{TX2007} which provides a zero-one law for the lower class of 
 (not necessarily Gaussian) self-similar processes with ergodic scaling transformations. Recall from 
 \cite{Taka89,TX2007} that for every $a> 0$, $a\ne 1$, the scaling transformation $S_a$ of $X$ is 
 defined by $S_a(X) = \{a^{-H} X(at)\}_{t \ge 0}$. Notice that  for any $H$-self-similar process $X$, 
 the   scaling transformation $S_{ a}$ preserves the distribution of $X$. Hence the notions of 
 ergodicity and mixing of $S_{ a}$ can be defined in the usual way, cf. Cornfeld et al. \cite{CFS}.
 Following Takashima \cite{Taka89}, we say that an $H$-self-similar process $X =
\{X(t)\}_{ t \ge 0}$ is ergodic (resp. strong mixing) if for every $a > 0, a \ne 1$, the scaling transformation 
$S_{ a}$ is ergodic (resp. strong mixing). This, in turn, is equivalent to saying that all the shift 
transformations for the corresponding stationary process obtained via Lamperti's transformation 
$L(X)= \{ e^{-H t} X(e^t)\}_{ t \in \mathbb R}$ are ergodic (resp. strong mixing).
  
For GFBM $X$ in (\ref{eq X}), in order to verify the ergodicity of its scaling  transformations, we 
use the representation (\ref{eq X}) to show that the autocovariance function of $L(X)$ 
satisfies $e^{-H t} \mathbb E\big(X(e^t)X(1)\big) = O(e^{-\kappa_5 t}) $ as $t \to \infty$, where 
$\kappa_5 = \min\{\frac 1 2 - \gamma, \, \frac 1 2 + \gamma - \alpha\} > 0$.
By the Fourier inversion formula,  $L(X)$ has a continuous spectral density function. It follows 
from \cite[Theorem 8]{Ma} that $L(X)$ is strong mixing and thus is ergodic. Hence, 
 \cite[Proposition 3.3]{TX2007} is applicable to GFBM $X$ and \eqref{Eq:Chung01law} follows. 
 
The proof of (\ref{Eq:Chung01law2}) is similar to that of \cite[Proposition 3.3]{TX2007} 
with minor modifications. See \cite{Taka89} for more zero-one laws for self-similar processes with 
ergodic scaling transformations.

 \begin{lemma}\label{lem:01lawZ} 
 Assume $\alpha\in(-1/2+\gamma,\,1/2)$. There exist constants $c_{2,1}, c_{2,1}' \in[0,\infty]$
 such that
\begin{equation}\label{Eq:Chung01law} 
 \liminf_{t\rightarrow0+}  \sup_{0\le s\le t} \frac{ |X(s)|} {t^{H}/(\ln\ln t^{-1})^{\beta}}
 =c_{2,1},\ \ \  \ \ \ \text{a.s.}
\end{equation}
and 
\begin{equation}\label{Eq:Chung01law2} 
 \liminf_{ t\rightarrow\infty}  \sup_{0\le s\le t} \frac{ |X(s)|} {t^{H}/(\ln\ln t)^{\beta}}
 =c_{2,1}',\ \ \  \ \ \ \text{a.s.}
\end{equation}
\end{lemma}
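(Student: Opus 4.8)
The plan is to derive both identities from a single principle: each $\liminf$ is a measurable functional of the sample path that is invariant under the scaling transformation $S_a(X)=\{a^{-H}X(at)\}_{t\ge0}$, and $S_a$ is ergodic, so the functional is almost surely a deterministic constant. Writing $M(t)=\sup_{0\le s\le t}|X(s)|$, the transformation $S_a$ sends $M(t)$ to $a^{-H}M(at)$. Hence, putting $V_0=\liminf_{t\to0+}M(t)(\ln\ln t^{-1})^{\beta}/t^{H}$ and substituting $u=at$, one finds $V_0\circ S_a=V_0$ pathwise: the factor $a^{-H}$ introduced by $S_a$ is cancelled by the Jacobian factor $a^{H}$ of the substitution, and $\ln\ln(a/u)\sim\ln\ln u^{-1}$ as $u\to0+$ renders the shifted double logarithm asymptotically harmless. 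The same computation with $\ln\ln(u/a)\sim\ln\ln u$ as $u\to\infty$ shows that $V_\infty=\liminf_{t\to\infty}M(t)(\ln\ln t)^{\beta}/t^{H}$ is likewise $S_a$-invariant. Once $S_a$ is known to be ergodic for every $a\ne1$, invariance forces $V_0$ and $V_\infty$ to equal constants $c_{2,1},c_{2,1}'\in[0,\infty]$.

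The substantive step is therefore the ergodicity of $S_a$. I would establish it through Lamperti's transformation $L(X)=\{e^{-Ht}X(e^t)\}_{t\in\mathbb R}$, a stationary Gaussian process for which ergodicity (resp. strong mixing) of all scaling transformations $S_a$ is equivalent to ergodicity (resp. strong mixing) of all its time shifts. From the representation \eqref{eq X} I would compute the autocovariance $e^{-Ht}\mathbb E[X(e^t)X(1)]$ and show it decays like $O(e^{-\kappa_5 t})$ as $t\to\infty$, with $\kappa_5=\min\{1/2-\gamma,\,1/2+\gamma-\alpha\}>0$; the two competing exponents come respectively from the integrability of the kernel near $u=0$ and from its decay as $u\to-\infty$. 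Exponential decay is integrable, so Fourier inversion produces a continuous spectral density for $L(X)$, and \cite[Theorem 8]{Ma} then yields strong mixing, hence ergodicity, of the shifts.

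With ergodicity in hand, \eqref{Eq:Chung01law} follows from \cite[Proposition 3.3]{TX2007}, which records exactly the zero-one law for the lower class of a self-similar process whose scaling transformations are ergodic. Since that proposition is phrased for the limit $t\to0+$, it does not apply verbatim to \eqref{Eq:Chung01law2}; for the latter I would reproduce its proof with the obvious changes, adapting the invariance argument above to the regime $t\to\infty$.

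I expect the autocovariance estimate producing the rate $\kappa_5$ to be the only genuine obstacle. One must split $\mathbb E[X(e^t)X(1)]$ according to the decomposition \eqref{eq decom} and bound the contribution of the kernel $((e^t-u)_+^{\alpha}-(-u)_+^{\alpha})|u|^{-\gamma}$ against $((1-u)_+^{\alpha}-(-u)_+^{\alpha})|u|^{-\gamma}$ over $u\in\mathbb R$, verifying that neither the singularity at $u=0$ (controlled by $1/2-\gamma$) nor the tail as $u\to-\infty$ (controlled by $1/2+\gamma-\alpha$) slows the decay below $e^{-\kappa_5 t}$. The invariance argument and the appeals to \cite{Ma} and \cite[Proposition 3.3]{TX2007} are then routine.
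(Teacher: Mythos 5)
Your proposal is correct and takes essentially the same route as the paper: ergodicity of the scaling transformations established through Lamperti's transformation $L(X)$, exponential decay of its autocovariance with the same rate $\kappa_5=\min\{\tfrac12-\gamma,\,\tfrac12+\gamma-\alpha\}$, Maruyama's theorem \cite[Theorem 8]{Ma} for strong mixing, and then the zero-one law of \cite[Proposition 3.3]{TX2007} for $t\to0+$ together with an adaptation of its proof for $t\to\infty$. The only difference is cosmetic: you spell out the scaling-invariance computation for $V_0$ and $V_\infty$ that the paper delegates to the cited proposition.
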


For a constant $\lambda>0$, let $f_\lambda$ be the function defined by 
\begin{align}
f_\lambda(t):=     \frac{\lambda t^H}{( \ln|\ln t|)^{\beta}}, \ \ \ t>0.
\end{align}

\begin{proof}[Proof of Theorem \ref{thm Chung origin}]
When  $\alpha\in(-1/2+\gamma,\,1/2)$, by combining Theorem  \ref{thm main 0}  (resp. Theorem  \ref{thm main infty})
with (\ref{eq varphi1}),  we derive that if $\lambda<\kappa_4^{\beta}$, then $f_{\lambda}\in LLC_{0}(X)$ 
(resp. $f_{\lambda}\in LLC_{\infty}(X)$), 
else if $\lambda>\kappa_3^{\beta}$, then $f_{\lambda}\in LUC_{0}(X)$ (resp. $f_{\lambda}\in LUC_{\infty}(X)$).  
These, together with the zero-one law in Lemma \ref{lem:01lawZ}, imply the desirable results in Theorem 
 \ref{thm Chung origin}. 
 \end{proof}

\section{Some preliminary results }\label{sec:prelim}

In this section, we provide some preliminary results that will be useful for proving Theorems 
\ref{thm main 0} and  \ref{thm main infty}. Their proofs are modifications of those in Talagrand \cite{Tal96}.

\begin{lemma}   
If $\alpha\in (-1/2+\gamma, \,1/2)$, then there exists a constant $c_{3,1}>0$ such that for 
all  $0<t<u$, $\theta, \eta>0$,
\begin{equation}\label{eq Mtu}
\mathbb P \big(M(t)\le \theta t^H,\, M(u)\le \eta\big)\le   2\varphi(\theta)\exp\bigg(-\frac{u-t}{c_{3,1}  \,
u^{\frac{\gamma}{\beta}}\eta^{\frac{1}{\beta}}}\bigg).
\end{equation}
\end{lemma}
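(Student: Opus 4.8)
The goal is to prove \eqref{eq Mtu}, an estimate that combines a small-ball bound on $M(t)$ at the small scale $t$ with a decay factor controlling $M(u)$ at the larger scale $u$. The plan is to exploit the independence structure coming from the decomposition \eqref{eq decom}, together with the self-similarity \eqref{eq M1} and the small ball estimate \eqref{eq varphi1}, to separate the two scales. The factor $2\varphi(\theta)$ clearly should come from the event $\{M(t)\le \theta t^H\}$ via \eqref{eq M1}, so the real content is to produce the exponential factor $\exp\!\bigl(-(u-t)/(c_{3,1} u^{\gamma/\beta}\eta^{1/\beta})\bigr)$ from the event $\{M(u)\le \eta\}$ in a way that gains from the conditioning on $\{M(t)\le \theta t^H\}$.

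First I would write $M(u)=\sup_{0\le s\le u}|X(s)|\ge \sup_{t\le s\le u}|X(s)|$ and try to condition on $\mathcal F_t:=\sigma\{X(s):0\le s\le t\}$. The natural idea, following Talagrand \cite{Tal96}, is to bound
\begin{equation}
\mathbb P\bigl(M(t)\le \theta t^H,\, M(u)\le \eta\bigr)\le \mathbb E\Bigl[\1_{\{M(t)\le \theta t^H\}}\,\mathbb P\bigl(\sup_{t\le s\le u}|X(s)|\le \eta \,\big|\, \mathcal F_t\bigr)\Bigr],
\end{equation}
and then to estimate the conditional probability by something deterministic that is independent of the past. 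The key is to extract from $\{\sup_{t\le s\le u}|X(s)|\le\eta\}$ a lower bound on the conditional small-ball probability that decays exponentially in $u-t$. Concretely I would bound the conditional probability by the probability that a single increment, or the supremum of increments, of $X$ over $[t,u]$ stays small, and then use the moment estimate \eqref{Eq: Zmoment1} for the generalized Riemann–Liouville part $Z$, since near the diagonal the variance of $Z(u)-Z(s)$ behaves like $|u-s|^{2\alpha+1}/u^{2\gamma}=|u-s|^{2\beta}/u^{2\gamma}$. This is exactly the source of the exponents: a Gaussian anti-concentration/chaining bound over an interval of length $u-t$ whose increment scale is $|u-s|^{\beta}/u^{\gamma}$ produces a factor $\exp(-c(u-t)/(u^{\gamma/\beta}\eta^{1/\beta}))$.

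The cleanest route is probably to discretize $[t,u]$ into points $s_0=t<s_1<\cdots<s_N=u$ with spacing chosen so that each increment $X(s_{i+1})-X(s_i)$ has standard deviation comparable to $\eta$, forcing the number of points to be $N\simeq (u-t)/(u^{\gamma/\beta}\eta^{1/\beta})$, and then to use independence (or conditional independence given $\mathcal F_t$, via the Markov-type property of the $Z$-increments built from disjoint pieces of the driving Brownian motion $B$) to get a product of small Gaussian probabilities, each bounded by a constant strictly less than $1$. Multiplying $N$ such factors yields the required exponential. Because the increments of $Z$ over disjoint subintervals of $[t,u]$ are driven by integrals of $B$ over overlapping intervals, genuine independence does not hold, so I would instead split off the part of the stochastic integral supported on $[s_i,s_{i+1}]$, which is independent of everything prior and has variance bounded below by \eqref{Eq: Zmoment1}; conditioning on the rest and using that adding an independent Gaussian only spreads mass (Anderson's inequality) lets each factor be bounded by a fixed constant $<1$.

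The main obstacle I anticipate is handling the non-stationarity and the $u^{\gamma}$ normalization carefully: the increment variance depends on the base point $u$ through $u^{-2\gamma}$, and the two-sided bounds in \eqref{Eq: Zmoment1} are only sharp near the diagonal $s\le u\le c_{1,7}s$, so the discretization spacing must be chosen to respect this and the constants tracked to yield a single $c_{3,1}$ uniform in $t,u,\theta,\eta$. A secondary technical point is to justify the conditioning step rigorously, i.e. that conditioning on $\{M(t)\le\theta t^H\}$ does not destroy the lower variance bound on the fresh increments over $[t,u]$; this is where the independent-increment decomposition of the driving noise, together with Anderson's inequality to absorb the $\mathcal F_t$-measurable shift, does the work and decouples the $2\varphi(\theta)$ factor from the exponential factor.
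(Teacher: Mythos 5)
Your proposal follows essentially the same route as the paper's proof: discretize $[t,u]$ with spacing $\simeq u^{\gamma/\beta}\eta^{1/\beta}$ (the paper defines $t_n$ recursively by $t_n - t_n^{\gamma/\beta}\eta^{1/\beta}=t_{n-1}$), split each increment into a fresh stochastic integral over the new subinterval (independent of the past, with variance $\gtrsim \eta^2$) plus a past-measurable part, and apply Anderson's inequality to bound each conditional factor by $\Phi(2)-\Phi(-2)<1$, yielding the exponential decay by induction. Your anticipated technical points (non-independence of overlapping $Z$-increments, the $u^{-2\gamma}$ normalization, absorbing the $\mathcal F$-measurable shift) are exactly the ones the paper resolves in the same way.
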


\begin{proof}   
If $(u-t)/\big(u^{\frac{\gamma}{\beta}}\eta^{\frac{1}{\beta}}\big)\le 2$,  then it is obvious that \eqref{eq Mtu}   
holds with $c_{3,1}=2/\ln 2$. Hence, we only need to prove \eqref{eq Mtu} in the case of 
$(u-t)/\big(u^{\frac{\gamma}{\beta}}\eta^{\frac{1}{\beta}}\big)>2$.
The proof is divided into two steps.

{\bf Step 1.}  We define an increasing sequence $\{t_n\}_{n\ge0}$ as follows.  Set $t_0=t$. For any 
$n\ge1$, if $t_{n-1}$ has been defined,  then we choose $t_n>t_{n-1}$  such that 
\begin{align*}
t_n -t_{n}^{\frac{\gamma}{\beta}} \eta^{\frac{1}{\beta}}=t_{n-1}.
\end{align*} 
Consider the event
\begin{align*}
A_k:=\big\{M(t)\le \theta t^H \big\}\cap \big\{M({t_k}) \le \eta\big\}.
\end{align*}
It suffices to prove that for any $k\ge1$, we have 
\begin{align}\label{Eq: PAk}
\mathbb P(A_k)\le \varphi(\theta) \rho^{k}, 
\end{align}
where $\rho\in (0,1)$ is a constant that depends on $\beta$ and $\gamma$ only. Indeed, if $k$ is the 
largest integer such that $t_k\le u$, then $k+1\ge (u-t)/\big({u^{\frac{\gamma}{\beta}} \eta^{\frac{1}{\beta}}}\big)$, 
which implies  $k\ge (u-t)/\big({2u^{\frac{\gamma}{\beta}} \eta^{\frac{1}{\beta}}}\big)$ and 
\begin{align*}
A_k\supset \big\{M(t)\le \theta t^H \big\}\cap \big\{M(u)\le \eta\big\}.
\end{align*}
Hence, \eqref{Eq: PAk} implies \eqref{eq Mtu}.

{\bf Step 2.}  We prove \eqref{Eq: PAk} by induction over $k$. The result holds for $k=0$ by \eqref{eq M1}.  
For the induction step, we observe that 
$$
A_{k+1}\subset A_k  \cap \big\{|U|\le 2\eta \big\},
$$
where $U:= X({t_{k+1}})-X({t_k})$.  By using \eqref{eq X}, $U$ can be rewritten as follows $U=U_1+U_2$, 
where 
\begin{align*}
U_1&:=\int_{t_k}^{t_{k+1}} (t_{k+1}-u)^{\alpha}u^{-\gamma}B(du), \\
 U_2&:=\int_{-\infty}^{t_{k}} \big[(t_{k+1}-u)^{\alpha}- (t_{k}-u)^{\alpha}   \big]|u|^{-\gamma}B(du).
\end{align*}
Notice that $U_1$ is a Gaussian random variable with 
  $$
\mathbb E \big[U_1\big]=0 \ \  \text{ and }\ \ \Var\big(U_1\big)\ge \frac{1}
{t_{k+1}^{2\gamma}}|t_{k+1}-t_k|^{2\beta}=\eta^{2}.
$$
Thus, we have
$$ \mathbb P\big(|U_1|\le  2\eta\big)\le \Phi(2)-\Phi(-2),
$$
where $\Phi$ denotes the distribution function of   a standard Gaussian random variable.  
Consequently, by Anderson's inequality \cite{And1955} and  the independence of  $U_1$ and 
$\sigma\big\{B(s); s\le t_k\big\}$, we have 
\begin{align*}
 \mathbb P(A_{k+1})\le&\,  \mathbb E\Big[
\mathbb P \Big(A_{k} \cap\big\{|U_1+U_2|\le  2\eta\big\}\,\big|\,\sigma\big\{ B(s);   s\le t_k\big\}\Big)\Big]\\
= &\,  \mathbb E\Big[ {\mathbbm 1}_{A_{k}} \cdot
\mathbb P\big( |U_1+U_2|\le  2\eta\,\big|\,\sigma\big\{B(s);  s\le t_k\big\}\big)\Big]\\
\le & \, \mathbb P\big(A_{k}\big)\cdot\mathbb P\big(|U_1|\le  2\eta\big)\\
\le &\,  \mathbb P\big(A_{k}\big)\cdot \big( \Phi(2)-\Phi(-2)\big).
\end{align*}
Therefore,  we have proved \eqref{Eq: PAk} with $\rho=\Phi(2)-\Phi(-2)$. This finishes the proof of 
(\ref{eq Mtu}). \end{proof}

Set 
\begin{align}
\psi(\theta):=-\log \varphi(\theta).
\end{align}
Then $\psi$ is positive and non-increasing.  According to Borell \cite{Borell1974}, we know that 
$\psi$ is convex which implies the existence of the right derivative $\psi'$ of $\psi$. Thus, 
$\psi'\le 0$ and $|\psi'|$ is non-increasing.  

By the small ball probability estimates in (\ref{eq varphi1}), we see that there exists a constant $K_1\ge1$  
such that for all $\theta<1$, 
\begin{align}\label{eq sbp}
\frac{1}{K_1 \theta^{1/\beta}}\le \psi(\theta)\le \frac{K_1}{\theta^{1/\beta}}.
\end{align}

The following lemmas give more properties of the functions $\varphi$ and $\psi$,
which are similar to those in  Talagrand \cite[Section 2]{Tal96}. 
\begin{lemma}\label{lem psi'}
There exists a constant $K_2\ge \max\big\{ 2^{1+1/\beta},\, 2\left(2K_1^2\right)^{\alpha}
K_1 \big\}$ such that for all $\theta\in (0, 1/K_2)$, 
\begin{align}\label{eq psi'}
 - \frac{K_2}{\theta^{1+1/{\beta}}}\le  \psi'(\theta) \le  -\frac{ 1}{K_2\theta^{1+1/{\beta}}}.
\end{align}
\end{lemma}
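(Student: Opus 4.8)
The plan is to derive the two-sided bound on $\psi'$ in \eqref{eq psi'} directly from the two-sided bound on $\psi$ in \eqref{eq sbp}, using nothing more than the convexity of $\psi$ recorded above (via Borell \cite{Borell1974}). The mechanism is the monotonicity of the difference quotients of a convex function: writing $D^{+}\psi=\psi'$ for the right derivative, for any $a<\theta<b$ one has
\[
\frac{\psi(\theta)-\psi(a)}{\theta-a}\ \le\ \psi'(\theta)\ \le\ \frac{\psi(b)-\psi(\theta)}{b-\theta}.
\]
Thus a lower bound for $\psi'(\theta)$ will come from comparing with a point $a$ to the left, and an upper bound from comparing with a point $b$ to the right; in each case I substitute the power bounds \eqref{eq sbp} for the values of $\psi$ at the two comparison points.

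For the lower bound in \eqref{eq psi'}, I would take $a=\theta/2$. Then $\theta-a=\theta/2$, and since $\psi(\theta)\ge 0$ while $\psi(\theta/2)\le K_1(\theta/2)^{-1/\beta}=2^{1/\beta}K_1\theta^{-1/\beta}$ by \eqref{eq sbp}, the left-hand difference quotient gives
\[
\psi'(\theta)\ \ge\ \frac{\psi(\theta)-\psi(\theta/2)}{\theta/2}\ \ge\ -\frac{2^{1+1/\beta}K_1}{\theta^{1+1/\beta}},
\]
which is the required estimate once $K_2\ge 2^{1+1/\beta}K_1$. No restriction on $\theta$ beyond $\theta<1$ is needed here, since $\theta/2<1$ automatically.

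For the upper bound in \eqref{eq psi'} I would compare with a point $b=c\theta$ to the right, where $c>1$ is a constant depending only on $K_1$ and $\beta$, to be fixed below. By \eqref{eq sbp}, $\psi(c\theta)\le K_1 c^{-1/\beta}\theta^{-1/\beta}$ and $\psi(\theta)\ge K_1^{-1}\theta^{-1/\beta}$, so
\[
\psi(c\theta)-\psi(\theta)\ \le\ \Big(\frac{K_1}{c^{1/\beta}}-\frac{1}{K_1}\Big)\theta^{-1/\beta}.
\]
Choosing $c$ so large that $c^{1/\beta}\ge 2K_1^{2}$, i.e. $c\ge(2K_1^{2})^{\beta}$, makes the bracket at most $-1/(2K_1)$, and the right-hand difference quotient then yields $\psi'(\theta)\le -\big(2K_1(c-1)\big)^{-1}\theta^{-1-1/\beta}$, the desired bound for a suitable $K_2$. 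The one genuine constraint is that \eqref{eq sbp} be applicable at $b=c\theta$, i.e. $c\theta<1$; this is exactly why $\theta$ must be confined to $(0,1/K_2)$ with $K_2$ large enough to also dominate $c\sim(2K_1^{2})^{\beta}=\sqrt 2\,K_1(2K_1^2)^{\alpha}$, and it is the source of the factor of the form $(2K_1^2)^{\alpha}$ appearing in the stated lower bound for $K_2$.

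The main (and essentially only) obstacle is this upper bound. A one-step comparison such as $b=2\theta$ fails because the constant $K_1$ in \eqref{eq sbp} satisfies $K_1\ge1$, so the upper estimate for $\psi(2\theta)$ need not drop below the lower estimate for $\psi(\theta)$ and the difference quotient need not even be negative. One must therefore move the comparison point a definite multiplicative distance $c=c(K_1,\beta)$ to the right to force a strictly negative difference quotient of the correct order, while simultaneously keeping $c\theta<1$ so that \eqref{eq sbp} still applies; balancing these two requirements is what dictates the threshold $1/K_2$ and the precise form of $K_2$. Everything else is a direct substitution of \eqref{eq sbp} into the two convexity inequalities above.
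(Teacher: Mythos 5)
Your proof is correct, and it is essentially the argument the paper intends: the paper defers this lemma to Talagrand's Section~2, whose proof is exactly this combination of the convexity of $\psi$ (difference quotients bracketing the right derivative) with the two-sided small ball bound \eqref{eq sbp}, comparing at $\theta/2$ for the lower bound and at $c\theta$ with $c=(2K_1^2)^{\beta}$ for the upper bound. Your observation that a fixed ratio $c$ depending on $K_1$ is forced (and that this is what produces the threshold $1/K_2$ and the factor $(2K_1^2)^{\alpha}$) is precisely the content of the constant in the statement.
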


\begin{lemma}\label{lem dom} 
There exists a constant  $K_3\ge K_2 2^{1+1/\beta}$ such that for all 
$ \theta \le \varepsilon\le 2\theta<1$, 
\begin{align}\label{eq control}
\exp\left(-K_3 \frac{|\varepsilon-\theta|}{\theta^{1+1/\beta}}\right)\le \frac{\varphi(\varepsilon)}{\varphi(\theta)}
\le \exp\left(K_3 \frac{|\varepsilon-\theta|}{\theta^{1+1/\beta}}\right).
\end{align}
\end{lemma}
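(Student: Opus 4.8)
The plan is to translate everything into the convex function $\psi=-\log\varphi$ recorded just before the lemma. Since $\tfrac{\varphi(\varepsilon)}{\varphi(\theta)}=\exp\bigl(\psi(\theta)-\psi(\varepsilon)\bigr)$, the estimate \eqref{eq control} is equivalent to
\[
-K_3\,\frac{|\varepsilon-\theta|}{\theta^{1+1/\beta}}\ \le\ \psi(\theta)-\psi(\varepsilon)\ \le\ K_3\,\frac{|\varepsilon-\theta|}{\theta^{1+1/\beta}}.
\]
Because $\varphi$ is non-decreasing, $\psi$ is non-increasing, so for $\theta\le\varepsilon$ we have $\psi(\theta)-\psi(\varepsilon)\ge0$, i.e. the ratio is at least $1$. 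This makes the left-hand inequality automatic, and shows that the genuine content of the lemma is the upper bound; the lower bound is recorded only to keep the statement symmetric.

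For the upper bound I would use the convexity of $\psi$: it is locally absolutely continuous with non-positive right derivative $\psi'$, so
\[
\psi(\theta)-\psi(\varepsilon)=\int_\theta^\varepsilon |\psi'(s)|\,ds .
\]
On $[\theta,\varepsilon]\subseteq[\theta,2\theta]$ every point satisfies $s\ge\theta$, and the map $s\mapsto s^{-(1+1/\beta)}$ is decreasing, so the upper bound of Lemma \ref{lem psi'}, namely $|\psi'(s)|\le K_2 s^{-(1+1/\beta)}$, gives $|\psi'(s)|\le K_2\,\theta^{-(1+1/\beta)}$ throughout the interval. Integrating over $[\theta,\varepsilon]$ yields
\[
\psi(\theta)-\psi(\varepsilon)\ \le\ K_2\,\frac{\varepsilon-\theta}{\theta^{1+1/\beta}}\ =\ K_2\,\frac{|\varepsilon-\theta|}{\theta^{1+1/\beta}},
\]
and since $K_3\ge K_2 2^{1+1/\beta}\ge K_2$ the claimed inequality follows.

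The one point requiring care — and the step I would treat as the main obstacle — is the range of validity of Lemma \ref{lem psi'}, which controls $|\psi'|$ only for arguments below the threshold $1/K_2$, whereas the hypothesis here allows $\theta$ up to $1/2$. When $\varepsilon\le 2\theta<1/K_2$ (equivalently $\theta<1/(2K_2)$) the argument above applies verbatim. For the remaining compact range $\theta\in[\,1/(2K_2),\,1/2)$ I would instead use that $|\psi'|$ is non-increasing and hence bounded there by the constant $C:=|\psi'(1/(2K_2))|$, while $\theta^{-(1+1/\beta)}>1$; thus $|\psi'(s)|\le C\le C\,\theta^{-(1+1/\beta)}$ on the interval, and enlarging $K_3$ so that $K_3\ge C$ as well covers this case. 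In the applications to Talagrand's method one always works as $\theta\to0$, so only the small-$\theta$ regime is actually invoked, and this second case is a routine completeness check rather than a real difficulty.
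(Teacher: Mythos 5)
Your proof is correct and follows essentially the route the paper intends (the paper defers this lemma to Talagrand's Section 2): exploit convexity of $\psi$ together with the derivative bounds of Lemma \ref{lem psi'}, the lower bound in \eqref{eq control} being automatic since $\varphi$ is non-decreasing, so $\varphi(\varepsilon)/\varphi(\theta)\ge 1$. Your additional treatment of the range $\theta\in[1/(2K_2),1/2)$, where Lemma \ref{lem psi'} is not directly applicable and one instead bounds $|\psi'|$ by its (finite) value at $1/(2K_2)$ using that $|\psi'|$ is non-increasing, is a legitimate completeness check that the paper's citation to Talagrand glosses over.
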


\begin{lemma}\label{lem increase} For all $\theta<\theta_0:=(\beta/K_2)^{\beta}$,   the function 
$\theta^{-1/\beta}\varphi(\theta)$ is increasing. 
\end{lemma}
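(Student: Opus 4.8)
The plan is to establish the monotonicity by controlling the right logarithmic derivative of the function $g(\theta):=\theta^{-1/\beta}\varphi(\theta)$; indeed $g$ is increasing on an interval precisely when $\log g$ is, and writing $\varphi=e^{-\psi}$ gives the transparent expression $\log g(\theta)=-\frac1\beta\log\theta-\psi(\theta)$. The entire statement then reduces to comparing the rate $\frac{1}{\beta\theta}$ with $-\psi'(\theta)$, and the required lower bound on $-\psi'$ is exactly what Lemma~\ref{lem psi'} supplies. This comparison is the heart of the matter and the reason the threshold comes out to be precisely $\theta_0$.

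I would first settle the regularity. By Borell's theorem, recalled just before Lemma~\ref{lem psi'}, $\psi$ is convex, hence continuous with an everywhere-defined right derivative $\psi'$. Consequently $\log g$ is continuous and has right derivative $(\log g)'(\theta)=-\frac{1}{\beta\theta}-\psi'(\theta)$. Using the elementary criterion that a continuous function whose right (Dini) derivative is positive throughout an interval is strictly increasing there, the proof reduces to verifying $-\psi'(\theta)>\frac{1}{\beta\theta}$ on the interval in question.

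For the decisive estimate I would insert the lower bound $-\psi'(\theta)\ge \frac{1}{K_2\,\theta^{1+1/\beta}}$ of Lemma~\ref{lem psi'}, which holds for $\theta<1/K_2$. It then suffices that $\frac{1}{K_2\theta^{1+1/\beta}}\ge \frac{1}{\beta\theta}$, and a one-line rearrangement (divide by $\theta$, isolate $\theta^{1/\beta}$) turns this into $\theta^{1/\beta}\le \beta/K_2$, that is $\theta\le(\beta/K_2)^{\beta}=\theta_0$; for $\theta<\theta_0$ the inequality is strict, so $(\log g)'(\theta)>0$ and $g$ is strictly increasing. The computation is routine, and the only genuine care is conceptual rather than analytic: because $\psi$ is merely one-sided differentiable, I avoid differentiating $g$ directly and instead argue through convexity and the right-derivative monotonicity criterion, applying the bound of Lemma~\ref{lem psi'} on its domain of validity $\theta<1/K_2$. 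That bookkeeping is the sole obstacle; no further analytic difficulty arises.
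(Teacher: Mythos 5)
Your proposal is correct and is essentially the paper's own argument: the paper likewise computes the right derivative of $\theta^{-1/\beta}\varphi(\theta)$, namely $\left(-\frac{1}{\beta\theta}-\psi'(\theta)\right)\theta^{-1/\beta}\varphi(\theta)$, and invokes the lower bound on $-\psi'$ from Lemma~\ref{lem psi'} to get positivity exactly when $\theta^{1/\beta}<\beta/K_2$, i.e.\ $\theta<\theta_0$; your passage through $\log g$ and the explicit right-derivative monotonicity criterion are only cosmetic refinements. One shared wrinkle (not a defect of yours relative to the paper): since $K_2\ge 2^{1+1/\beta}$ forces $\theta_0=(\beta/K_2)^{\beta}>1/K_2$, the bound \eqref{eq psi'} as stated covers only $(0,1/K_2)$, so both arguments literally establish monotonicity on $(0,1/K_2)$ rather than all of $(0,\theta_0)$ --- harmless for the paper's later use, where any fixed positive threshold would do.
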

Since the proofs of the above three lemmas are similar to the analogous lemmas for FBM in 
Talagrand \cite[Section 2]{Tal96}, we only prove  Lemma \ref{lem increase} as an example.
\begin{proof}[Proof of Lemma \ref{lem increase}] The right derivative of  the function 
$\theta^{-1/\beta}\varphi(\theta)$ is 
$$
\left(-\frac{1}{\beta \theta} -\psi'(\theta)\right)\theta^{-1/\beta}\varphi(\theta),
$$
which is positive in the interval $(0, \theta_0)$ for the positive constant $\theta_0=(\beta/K_2)^{\beta}$ by \eqref{eq psi'}. 
The proof is complete.     
\end{proof}

Given a Gaussian process $\{X(t)\}_{t\in \mathbb T}$, let us denote by $N(\mathbb T, \, d_X, \, \varepsilon)$ 
the smallest number of open balls of radius $\varepsilon$ for the canonical distance $d_X(s,t)=\|X(s)-X(t)\|_2$ 
that are needed to cover $\mathbb T$, where $\|\cdot\|_2$ is the $L^2(\mathbb P)$-norm. Recall the following 
lemma from Talagrand \cite{Tal96}.
\begin{lemma}\cite[Lemma 2.1]{Tal96}\label{lem Dudley}
There exists a  universal constant $K_4$ such that for all $t_0\in \mathbb T$ and $x>0$, 
\begin{align}
\mathbb P\left(\sup_{t\in \mathbb T}\left|X(t)-X(t_0)\right| \ge K_4 x\int_0^{\infty} \sqrt{\log N(\mathbb T,\, d_X,\, 
\varepsilon) }d\varepsilon \right)\le \exp\big(-x^2\big). 
\end{align}
\end{lemma}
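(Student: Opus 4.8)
The plan is to read this as a purely Gaussian statement and to combine Dudley's metric-entropy bound for the mean of a Gaussian supremum with Borell's concentration inequality, using the entropy integral $I:=\int_0^\infty\sqrt{\log N(\mathbb T,d_X,\varepsilon)}\,d\varepsilon$ to control both the mean and the maximal standard deviation. Write $S:=\sup_{t\in\mathbb T}|X(t)-X(t_0)|$, which we may assume a.s.\ finite, since otherwise $I=\infty$ and the inequality is vacuous. Since $X(t_0)-X(t_0)=0$, both one-sided suprema $a:=\sup_{t\in\mathbb T}(X(t)-X(t_0))$ and $b:=\sup_{t\in\mathbb T}(X(t_0)-X(t))$ are nonnegative, have the same law, and satisfy $S=\max\{a,b\}\le a+b$. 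I would bound $\mathbb E[S]$ by applying Dudley's inequality to each of $a,b$, and control the tail of $S$ itself by Borell's inequality, which needs only $\mathbb E[S]$ and the maximal variance; in this way the whole argument runs on the single one-sided entropy integral $I$.

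For the mean, I would prove $\mathbb E[a]\le C_1 I$ by the standard chaining argument: for each $n\ge0$ fix a minimal $2^{-n}$-net $\mathcal T_n$ of $(\mathbb T,d_X)$ and approximate each $t$ by its nearest point $\pi_n(t)\in\mathcal T_n$; telescoping $X(t)-X(t_0)$ along the chain $\pi_0(t),\pi_1(t),\dots$ and using the sub-Gaussian tail $\mathbb P(|X(s)-X(t)|\ge u\,d_X(s,t))\le 2e^{-u^2/2}$ with a union bound over the at most $|\mathcal T_n|\cdot|\mathcal T_{n+1}|$ links at level $n$ gives $\mathbb E[a]\le C\sum_{n\ge0}2^{-n}\sqrt{\log N(\mathbb T,d_X,2^{-n})}$. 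As $N(\mathbb T,d_X,\cdot)$ is non-increasing, this dyadic sum is comparable to $I$, so $\mathbb E[S]\le 2\,\mathbb E[a]\le 2C_1 I$.

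For the variance, I would compare the maximal standard deviation $\sigma:=\sup_{t\in\mathbb T}\|X(t)-X(t_0)\|_2$ with $I$. For any $\varepsilon<\diam(\mathbb T,d_X)/2$ at least two balls are needed to cover $\mathbb T$, so $N(\mathbb T,d_X,\varepsilon)\ge 2$ and $\sqrt{\log N}\ge\sqrt{\log 2}$ on $(0,\diam(\mathbb T,d_X)/2)$; integrating yields $I\ge\tfrac12\sqrt{\log 2}\,\diam(\mathbb T,d_X)$, whence $\sigma\le\diam(\mathbb T,d_X)\le C_2 I$. Since $S$ is the supremum of the centered Gaussian family $\{\pm(X(t)-X(t_0))\}$, Borell's inequality gives $\mathbb P(S\ge\mathbb E[S]+r)\le\exp(-r^2/(2\sigma^2))\le\exp(-r^2/(2C_2^2I^2))$ for every $r\ge0$.

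Combining the two estimates, I would take the threshold $K_4 x I$ and set $r:=K_4 x I-\mathbb E[S]\ge(K_4 x-2C_1)I$, so that the concentration bound yields an exponent at least $(K_4 x-2C_1)^2/(2C_2^2)$, which exceeds $x^2$ once $K_4$ is chosen large relative to $C_1$ and $C_2$. The main obstacle is the bookkeeping of universal constants: one must arrange that the single quantity $I$ simultaneously dominates $\mathbb E[S]$ (through the chaining sum) and $\sigma$ (through the diameter bound), and then match these inside Borell's inequality so as to recover precisely the exponent $x^2$. The two-sided absolute value, the convergence of the chaining series, and the comparison of the dyadic sum with $I$ are all routine once the net estimates are in place; the only genuinely delicate range is that of small $x$, where $e^{-x^2}$ is close to $1$, a range that is immaterial in the applications, where $x\to\infty$.
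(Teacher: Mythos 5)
Your strategy --- Dudley's entropy bound for the mean plus Borell's concentration, with the maximal standard deviation $\sigma$ dominated by the diameter and hence by the entropy integral $I$ --- is the canonical route to this statement. The paper itself gives no proof (Lemma \ref{lem Dudley} is quoted from Talagrand \cite{Tal96}), and the standard proof of that cited result is exactly this chaining-plus-concentration scheme. Your intermediate steps are sound: $\mathbb{E}[S]\le 2C_1 I$ by chaining and the symmetry $a\overset{d}{=}b$; $\sigma\le \mathrm{diam}(\mathbb{T},d_X)\le C_2 I$ via the observation that $N(\mathbb{T},d_X,\varepsilon)\ge 2$ for $\varepsilon$ below half the diameter; and Borell's inequality legitimately applies to the symmetrized family $\{\pm(X(t)-X(t_0))\}_{t\in\mathbb{T}}$, whose supremum is $S$.

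The gap is in the final matching of constants, and it is genuine rather than bookkeeping. Borell's inequality requires $r:=K_4xI-\mathbb{E}[S]\ge 0$, and your exponent requirement $(K_4x-2C_1)^2/(2C_2^2)\ge x^2$ reduces to $(K_4-\sqrt{2}\,C_2)x\ge 2C_1$, which fails for $0<x<2C_1/(K_4-\sqrt{2}\,C_2)$ no matter how large $K_4$ is chosen. Moreover, this cannot be repaired, because the statement as quoted is false for small $x$: take $X$ to be standard Brownian motion on $\mathbb{T}=[0,1]$ with $t_0=0$, so that $I$ is a finite positive constant; the small-ball estimate $\mathbb{P}(S<\lambda)\le \frac{4}{\pi}\exp\left(-\pi^2/(8\lambda^2)\right)$ shows that $\mathbb{P}(S<K_4xI)$ is, for small $x$, far smaller than $1-\exp(-x^2)$ (which is of order $x^2$), whence $\mathbb{P}(S\ge K_4xI)>\exp(-x^2)$ for all sufficiently small $x>0$. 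What your argument does prove, after enlarging $K_4$, is the lemma for $x\ge 1$, or equivalently the version with threshold $K_4(1+x)I$ valid for all $x>0$; this is the form the result is actually used in, e.g.\ in the proof of Proposition \ref{prop mtu1}, where the bound is only useful when the exponent is large and the regime of bounded $x$ is absorbed into the constant $K$. You should state that restriction explicitly rather than calling the small-$x$ range merely ``delicate'': no choice of $K_4$ closes it, because the inequality there is not true.
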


The following proposition will be important for proving the necessity in Theorems \ref{thm main 0} and  
\ref{thm main infty}. 

\begin{proposition}\label{prop mtu1} 
Assume $\tau=\min\left\{\frac14(1-H),\, \frac{H}{4},\, \frac14\left(\frac12-\gamma\right)\right\}$. Then for any    
$u>t>0$, $\theta>0, \eta>0$,  we have  
  \begin{equation}\label{eq mtu1}
  \begin{split}
 & \mathbb P\Big( \left\{M(t)\le \theta t^H  \right\}\cap \left\{ M(u)\le \eta u^H \right\}\Big)\\
  &\le \exp\left[-\frac{1}{K}\left(\frac{u}{t}\right)^{\tau}\right]+\varphi(\theta)\varphi(\eta) 
  \exp\left[K  \left(\frac{u}{t}\right)^{-\tau}\left(\theta^{-1-1/\beta}+\eta^{-1-1/\beta} \right)\right].
  \end{split}
      \end{equation}
     \end{proposition}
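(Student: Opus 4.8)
The plan is to read \eqref{eq mtu1} as a decorrelation estimate: when $u/t$ is large, the event $A:=\{M(t)\le\theta t^H\}$, governed by $X$ on the near interval $[0,t]$, and the event $B:=\{M(u)\le\eta u^H\}$, which forces $X$ to be small on a far interval, should be almost independent, the product $\varphi(\theta)\varphi(\eta)$ appearing with a small multiplicative correction and an additive error $\exp[-\tfrac1K(u/t)^{\tau}]$ coming from an exceptional event. I may assume $u/t$ is large, since for $u/t$ bounded the crude bound $\mathbb P(A\cap B)\le\varphi(\theta)$ together with \eqref{eq sbp} already has the required form after enlarging $K$. I would fix two intermediate times $t<t_1<t^*<u$ spaced geometrically (for instance $t_1=t\,(u/t)^{\tau}$, $t^*=t\,(u/t)^{2\tau}$, so that $t_1/t$, $t^*/t_1$ and $u/t^*$ are all unbounded powers of $u/t$) and work with the decomposition \eqref{eq decom}, $X=Y+Z$. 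The point of this splitting is that $Z$, whose increments obey \eqref{Eq: Zmoment1}, has short-range dependence, so cutting the representation at $v=t_1$ isolates on $[t^*,u]$ an innovation independent of the near block that captures all but a negligible (variance $O((u/t)^{-c}u^{2H})$) part of $Z$ there, whereas $Y$ carries the long-range dependence but is smooth, its increments obeying the Lipschitz-type bound \eqref{Eq: Ymoment1}.

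Concretely I would condition on $\mathcal F_{t_1}:=\sigma\{B(v):v\le t_1\}$. Because $t_1>t$, the near block $X|_{[0,t]}$, and hence $A$, is $\mathcal F_{t_1}$-measurable and, by \eqref{eq M1}, contributes the exact factor $\varphi(\theta)$. For $s\in[t^*,u]$ I split the representation at $v=t_1$ to write $X(s)=R(s)+X^*(s)$, where $R(s)=\mathbb E[X(s)\mid\mathcal F_{t_1}]$ and $X^*(s)=\int_{t_1}^{s}(s-v)^{\alpha}v^{-\gamma}B(dv)$ is independent of $\mathcal F_{t_1}$. Bounding $B$ by the far-interval constraint $\{\sup_{[t^*,u]}|X|\le\eta u^H\}$, conditioning on $\mathcal F_{t_1}$, and applying Anderson's inequality \cite{And1955} to discard the $\mathcal F_{t_1}$-measurable shift $R$ (exactly as in the proof of \eqref{eq Mtu}) would give, once $R$ satisfies $\sup_{[t^*,u]}|R|\le\delta u^H$, the bound $\mathbb P(A\cap B)\le\varphi(\theta)\,\mathbb P\big(\sup_{[t^*,u]}|X^*|\le(\eta+\delta)u^H\big)$ plus the probability that this control on $R$ fails.

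The decisive step, and the one I expect to be the main obstacle, is the control of the predictable part $R$ on $[t^*,u]$ through its metric entropy — exactly the difficulty the authors flag. Here $R$ contains the full long-range component $Y$, so its sup-norm is not small; what must be shown is that the fine-scale fluctuations of $R$ are small, so that a Dudley/net argument confines its effect to the radius shift $\delta\asymp(u/t)^{-\tau}$ outside an event of probability at most $\exp[-\tfrac1K(u/t)^{\tau}]$. I would estimate $N([t^*,u],d_R,\varepsilon)$ by treating the $Y$- and $Z$-contributions separately via \eqref{Eq: Ymoment1} and \eqref{Eq: Zmoment1}, feed the entropy integral into Lemma \ref{lem Dudley} with $x\asymp(u/t)^{\tau/2}$, and thereby produce the additive term. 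The singularity at $0$ — the blow-up of the increment variances like $s^{-(2-2H)}$ and $s^{-2\gamma}$ as $s\downarrow0$ — is what makes this estimate delicate, and it is precisely why $\tau$ is taken to be a quarter of the minimum of $1-H$, $H$ and $\tfrac12-\gamma$, so that the three competing error sources are simultaneously subcritical.

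It then remains to bound $\mathbb P\big(\sup_{[t^*,u]}|X^*|\le(\eta+\delta)u^H\big)$ by $\varphi(\eta)$ up to the stated factor. Since the increments of $X^*$ admit the lower bound \eqref{Eq: Zmoment1}, an Anderson-inequality induction along a geometric mesh of $[t^*,u]$, of the same type that proves \eqref{eq Mtu}, yields a small-ball upper bound of the same exponential order as $\varphi(\eta+\delta)$, cf. \eqref{eq varphi1}. The ratio estimate of Lemma \ref{lem dom} then replaces $\varphi(\eta+\delta)$ by $\varphi(\eta)\exp[K_3\delta\,\eta^{-1-1/\beta}]$, and with $\delta\asymp(u/t)^{-\tau}$ and $|\psi'|\asymp\theta^{-1-1/\beta}$ from Lemma \ref{lem psi'} this is $\varphi(\eta)\exp[K(u/t)^{-\tau}\eta^{-1-1/\beta}]$. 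The companion factor $\exp[K(u/t)^{-\tau}\theta^{-1-1/\beta}]$ exceeds $1$, so including it only weakens the estimate and renders \eqref{eq mtu1} symmetric; combining this product bound with the additive error $\exp[-\tfrac1K(u/t)^{\tau}]$ from the entropy step completes the proof.
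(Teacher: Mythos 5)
Your architecture (factor out $\varphi(\theta)$ from the near event, isolate an innovation independent of the past, pay an additive error for cross terms) has the right shape, but the final step hides a gap that I do not think can be closed within your setup. After conditioning on $\mathcal F_{t_1}$ and applying Anderson's inequality — which, note, requires \emph{no} control on $R$ whatsoever, so the entropy analysis of $R$ is beside the point for this route — you are left with $\mathbb P\big(\sup_{[t^*,u]}|X^*|\le \eta u^H\big)$, the small-ball probability of the innovation process. Your geometric-mesh induction bounds this by $\exp(-c'\eta^{-1/\beta})$ for \emph{some} constant $c'$, but the proposition requires a bound by $\varphi(\eta)$ itself, times a factor tending to $1$ as $u/t\to\infty$ (this sharpness is essential, since the proposition feeds into Lemma \ref{lem GBC}, where the coefficient of $\varphi(\theta)\varphi(\eta)$ must be $1+\varepsilon$). ``Same exponential order'' is not enough: $\varphi(\eta)$ can be as small as $\exp(-\kappa_3\eta^{-1/\beta})$, nothing forces $c'\ge\kappa_3$, and the discrepancy $\exp\big((\kappa_3-c')\eta^{-1/\beta}\big)$ is independent of $u/t$, so it cannot be absorbed into the correction factor $\exp[K(u/t)^{-\tau}\eta^{-1-1/\beta}]$. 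Worse, Anderson's inequality gives exactly the \emph{reverse} comparison, $\mathbb P\big(\sup_{[t^*,u]}|X^*|\le\eta u^H\big)\ge\mathbb P\big(\sup_{[t^*,u]}|X|\le\eta u^H\big)\ge\varphi(\eta)$; what you need is a quantitative converse to Anderson, i.e.\ that discarding the noise $R$ does not appreciably increase the small-ball probability, and that is precisely what is not available.

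The alternative route you mix in — triangle inequality, requiring $\sup_{[t^*,u]}|R|\le\delta u^H$ — fails outright, because, as you yourself observe, $R=\mathbb E[X(\cdot)\mid\mathcal F_{t_1}]$ contains all of $Y$ and so has sup-norm of order $u^H$ on $[t^*,u]$ with probability close to one; controlling only its ``fine-scale fluctuations'' does not help, since after freezing $R$ near a constant and applying Anderson to that constant shift you are back to the innovation small-ball problem above. This is where the paper's construction differs in an essential way: it cuts the integral representation \emph{symmetrically in the integration variable}, $X_1(s)=\int_{|x|\le v}G(s,x)B(dx)$ and $X_2(s)=\int_{|x|>v}G(s,x)B(dx)$ with $v=\sqrt{ut}$, so that \emph{both} cross terms are genuinely small in sup-norm ($\|X_2(s)\|_2\lesssim t v^{H-1}$ on $[0,t]$ and $\|X_1(s)\|_2\lesssim v^{H}$ on $[0,u]$, Lemma \ref{lem diam}). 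Each factor can then be converted back to the \emph{full} process at a slightly enlarged radius, e.g.\ $\mathbb P\big(\sup_{[0,t]}|X_1|\le(\theta+\delta)t^H\big)\le\varphi(\theta+2\delta)+\mathbb P\big(\sup_{[0,t]}|X_2|\ge\delta t^H\big)$, so only the exact function $\varphi$ ever appears and no small-ball upper bound for a component process is needed. A one-sided filtration cut at $t_1>t$ necessarily dumps the entire negative-axis (long-memory) contribution into the past, where it reappears as a non-small shift on the far interval; that obstruction, not the entropy estimate, is the real reason your proof does not go through.
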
  
\begin{proof} 
 We set $v:=\sqrt{ut}$. The idea is that if $t\ll u$, then $t\ll v\ll u$. We recall the  stochastic integral 
 representation in \eqref{eq X}, and define
 $$G(s, x):= \left((s-x)_+^{\alpha}-(-x)_+^{\alpha} \right) |x|^{-\gamma}, \ \ \ \text{for } s, x\in \mathbb R.
$$
Consider the following two processes:
\begin{align}\label{eq X12}
X_1(s):=\int_{|x|\le v} G(s,x)  B(dx),\, \,\,\,\,
X_2(s):=\int_{|x|> v} G(s,x)  B(dx).
\end{align}
Thus, $X(s)=X_1(s)+X_2(s)$ and the processes $X_1$ and $X_2$ are independent.

For any $\delta>0$,   we have
\begin{equation}\label{eq X12-0}
\begin{split}
 & \mathbb P\Big( \left\{M(t)\le \theta t^H  \right\}\cap \left\{ M(u)\le \eta u^H \right\}\Big)\\
= &\, \mathbb P\left(\sup_{0\le s\le t} |X(s)|\le \theta t^H,\, \sup_{0\le s\le u}  |X(s)|\le \eta u^H  \right)\\
\le  &\, \mathbb P\left(\sup_{0\le s\le t}  |X_1(s)|\le \left(\theta+\delta\right) t^H,  \, \sup_{0\le s\le u}  |X_2(s)|
\le \left(\eta+\delta\right) u^H \right)\\
&\,+  \mathbb P\left(\sup_{0\le s\le t}  |X_2(s)|\ge  \delta  t^H   \right)+\mathbb P\left( \sup_{0\le s\le u}  |X_1(s)|
\ge  \delta u^H \right).
\end{split}
\end{equation}
By the independence of $X_1$ and $X_2$,  we have
\begin{equation}\label{eq X12-1}
\begin{split}
&  \mathbb P\left(\sup_{0\le s\le t}  |X_1(s)|\le \left(\theta+\delta\right) t^H,  \, \sup_{0\le s\le u}  |X_2(s)|
\le \left(\eta+\delta\right) u^H \right)\\ 
  =&\, \mathbb P\left(\sup_{0\le s\le t}  |X_1(s)|\le \left(\theta+\delta\right) t^H\right)\cdot 
  \mathbb P\left(\sup_{0\le s\le u}  |X_2(s)|\le \left(\eta+\delta\right) u^H \right).
\end{split}
\end{equation}
Notice that
\begin{equation}\label{eq X12-2}
\begin{split}
 & \mathbb P\left(\sup_{0\le s\le t}  |X_1(s)|\le \left(\theta+\delta\right) t^H\right)\\
 \le &\,  \mathbb P\left(\sup_{0\le s\le t}  |X(s)|\le \left(\theta+2\delta\right) t^H\right)+
  \mathbb P\left(\sup_{0\le s\le t}  |X_2(s)|\ge  \delta  t^H\right)
\end{split}
\end{equation}
and 
\begin{equation}\label{eq X12-3}
\begin{split}
 & \mathbb P\left(\sup_{0\le s\le u}  |X_2(s)|\le \left(\theta+\delta\right) u^H\right)\\
 \le &\,  \mathbb P\left(\sup_{0\le s\le u}  |X(s)|\le \left(\theta+2\delta\right) u^H\right)+
  \mathbb P\left(\sup_{0\le s\le u}  |X_1(s)|\ge  \delta  u^H\right).
\end{split}
\end{equation}
Plugging \eqref{eq X12-1} to \eqref{eq X12-3} into \eqref{eq X12-0}, we get 
\begin{equation}\label{eq X12-4}
\begin{split}
 & \mathbb P\Big( \left\{M(t)\le \theta t^H  \right\}\cap \left\{ M(u)\le \eta u^H \right\}\Big)\\
 \le &\, \varphi(\theta+2\delta)\varphi(\eta+2\delta) + 2 \mathbb P\left(\sup_{0\le s\le t}  |X_2(s)|
 \ge  \delta  t^H\right)
  + 2\mathbb P\left(\sup_{0\le s\le u}  |X_1(s)|\ge  \delta  u^H\right).
\end{split}
\end{equation}
By the convexity of $\psi$ and \eqref{eq psi'}, we have 
\begin{align*}
\psi(\theta+2\delta)\ge &\, \psi(\theta)+2\delta\psi'(\theta)
\ge \, \psi(\theta)-\frac{2\delta K_2}{\theta^{1+1/\beta}}.
\end{align*}
Hence, we have
$$
\varphi(\theta+2\delta)\le \varphi(\theta)\exp\left(\frac{2\delta K_2}{\theta^{1+1/\beta}}\right).
$$
This and a similar inequality for $\varphi(\eta+2\delta)$ show that it suffices to show that when 
$\delta=(t/u)^{\tau}$  with $\tau=\min\left\{\frac14(1-H), \, \frac{H}{4}, \,\frac14\left(\frac12-\gamma\right)\right\}$,  
the last two terms of  \eqref{eq X12-4} are bounded by $\exp\big(-\frac{1}{K}\big({u}/{t}\big)^{\tau}\big)$. 
This will be done  through the following two lemmas whose proofs are postponed. 

\begin{lemma}\label{lem diam} If $\alpha\in (-1/2+\gamma,1/2)$, then  there exist constants $c_{3,2}, 
c_{3,3}>0$ satisfying that
\begin{itemize}
\item[(a)] For any $0\le s \le u$, we have 
\begin{equation}\label{Eq:  B11}
\|X_1(s)\|_2\le c_{3,2} \left\{\begin{array}{ll}
 v^{H}, \ \quad &\hbox{ if }\ \beta\in \left( \gamma, \, \frac12\right);\\ 
v^{\frac{1 }{2}-\gamma} u^{\beta-\frac12},  \ \quad &\hbox{ if } \ \beta\in \left[\frac12 , 1+\gamma\right).
\end{array}
\right.
 \end{equation} 
\item[(b)] For any $0\le s\le t$,  we have 
\begin{equation}\label{Eq: B12}
\|X_2(s)\|_2\le c_{3,3} t v^{\beta-\gamma-1}.
\end{equation}
\end{itemize}
\end{lemma}  
  
Recall $X=Y+Z$  in \eqref{eq decom}.  Observing  the moment estimates of $Y$ and $Z$ in 
\eqref{Eq: Ymoment1} and \eqref{Eq: Zmoment1}, it is easy to see that for any $b>a>0$,  $\varepsilon>0$,
the covering numbers of $[a, b]$ in the canonical metrics of $Y$ and $Z$ satisfy
$$N\big([a,b], \, d_Y, \, \varepsilon\big) \le  c_{1,4}^{\frac12}(b-a) a^{H-1} \varepsilon^{-1}, \ \ \ N\big([a,b], \, 
d_Z, \, \varepsilon\big) \le  c_{1,6}^{\frac12} (b-a) a^{-\frac{\gamma}{\beta} } \varepsilon^{-\frac{1}{\beta}}.
$$ 
The next lemma gives their  estimates  when $a=0$.
\begin{lemma}\label{lem covering number}   There exist   constants $c_{3,4}, c_{3,5}>0$ satisfying that
for  any $b>0,\varepsilon>0$, 
\begin{align}\label{Eq cov Y}
N \big([0, b], d_{Y}, \varepsilon\big) \le c_{3,4} b^{H }\,  \varepsilon^{-1}
\end{align}
and 
\begin{align}\label{Eq cov Z}
N\big([0, b], d_{Z}, \varepsilon\big) \le c_{3,5} b^{\frac{H}{\beta}}\,  \varepsilon^{- \frac 1 {\beta}}.
\end{align}
\end{lemma}
 
Combining  Lemma \ref{lem diam} and Lemma \ref{lem covering number}, we derive that 
there exist constants $c_{3,6}, c_{3,7}>0$ satisfying that 
  \begin{equation}\label{Eq:  T1} 
\begin{split}
&\int_0^{\infty} \sqrt{\log N([0,u], d_{X,1},\varepsilon)}d\varepsilon \\
 \le&\,  c_{3,6} \left\{\begin{array}{ll}
 v^{H}\sqrt{\log \left(\frac u v\right) }, \ \quad &\hbox{ if }\ \beta\in \left( \gamma, \frac12\right);\\ 
 u^{H}\left(\frac{v}{u} \right)^{ 1/2-\gamma}\sqrt{\log \left(\frac uv\right)},  \ \quad &\hbox{ if } \
  \beta\in \left[ \frac12, 1+\gamma\right),
\end{array}
\right.
 \end{split}
 \end{equation} 
  and 
\begin{align}\label{Eq:  T2} 
\int_0^{\infty} \sqrt{\log N\big([0,t], d_{X,2},\varepsilon\big)}d\varepsilon \le c_{3,7}  \left(\frac{t}{v}\right)   
v^{H}\sqrt{\log  \left(\frac v t\right)}.
\end{align}
Since $t/v=v/u=\sqrt{t/u}$, the conclusion in Proposition \ref{prop mtu1}  follows from Lemma \ref{lem Dudley},
 \eqref{Eq:  T1}, \eqref{Eq:  T2}  and the choice of $\tau$.  

To prove \eqref{Eq:  T1} and \eqref{Eq:  T2}, we use the decomposition of $X=Y+Z$  in \eqref{eq decom}. 
Since $$N\big([0,u], d_{X,i},\varepsilon\big)\le N\big([0,u], d_{Y,i},\varepsilon/2\big)+N\big([0,u], 
d_{Z,i},\varepsilon/2\big), \ \ \ i=1,2,$$ 
 it suffices to prove  \eqref{Eq:  T1}  and \eqref{Eq:  T2}  for $Z$ and $Y$ separately.

If $\beta\in (\gamma, 1/2)$,  then 
\begin{equation*} 
\begin{split}
\int_0^{\infty} \sqrt{\log N\big([0,u], d_{Z,1},\varepsilon\big)}d\varepsilon&\,\le  \int_0^{c_{3,2} v^{H}}
\sqrt{\log \big(c_{3,5}u^{H/\beta}\varepsilon^{- 1/\beta}\big)  }d\varepsilon\\
&\, =  \beta u^{H}\int_{c_{3,2} ^{-1/\beta} (u/v)^{H/\beta} }^{\infty} 
\sqrt{\log {(c_{3,5}x)}} x^{-\beta-1}dx\\
&\,=2\beta c_{3,5}^{-1} u^H \int_{\sqrt{\log\left(c_{3,5}c_{3,2} ^{-1/\beta } 
(u/v)^{H/\beta}   \right)}}^{\infty} t^2 \e^{-\beta t^2}dt\\
&\,\le   c_{3,8}  v^{H}\sqrt{\log \left(\frac u v\right)},
\end{split}
\end{equation*}
for some $c_{3,8}>0$, here in the second and third steps,  the changes of variables $\varepsilon
= u^Hx^{-\beta}$ and $x=\e^{t^2}/c_{3,5}$ are used,  respectively,  and  in the last step we have 
used the following element inequality: there exists a 
constant $c_{3,9}>0$ satisfying that  for all $a$ large enough,
\begin{align}\label{eq ineq}
 \int_{a}^{\infty} t^2 \e^{-\beta t^2} dt\le c_{3,9} a \e^{-\beta a^2}.
\end{align}

Similarly, 
\begin{equation*} 
\begin{split}
\int_0^{\infty} \sqrt{\log N\big([0,u], d_{Y,1},\varepsilon\big)}d\varepsilon&\,\le  \int_0^{c_{3,2}  v^{H}}
\sqrt{\log \big(c_{3,4}u^{H }\varepsilon^{- 1}\big)  }d\varepsilon\\
&\, =  u^{H}\int_{c_{3,2} ^{-1 } (u/v)^{H }   }^{\infty} 
\sqrt{\log {(c_{3,4}x)}} x^{-2}dx\\
&\, = 2  c_{3,4}^{-1} u^H 
\int_{\sqrt{\log\left(c_{3,4}c_{3,2} ^{-1} (u/v)^{H} \right)}}^{\infty} t^2 \e^{- t^2}dt\\
&\,\le  c_{3,10}v^{H}\sqrt{\log \left(\frac u v\right)},
\end{split}
\end{equation*} 
for some constant $c_{3,10}>0$, here     the changes of variables
  $\varepsilon= u^Hx^{-1}$ and  $x =\e^{t^2}/c_{3,4}$ are used in the second and third steps, 
 respectively, and \eqref{eq ineq} is used in the last step.
   
By using the same  procedures, we  can prove the remainder   of \eqref{Eq:  T1}  and \eqref{Eq:  T2}.  
The details are omitted here. The proof of Proposition \ref{prop mtu1} is complete.
\end{proof}

\begin{proof}[Proof of Lemma \ref{lem diam}]
(a). If $\beta\in (\gamma, 1/2)$, then $\alpha\in (\gamma-1/2, 0)$. By using  the inequality $|(s+x)^{\alpha}-
x^{\alpha}|\le x^{\alpha}$ for any $s, x>0$, we have 
 \begin{align*}
\| X_1(s)\|_2^2=&\,  \int_{-v}^0 \left|(s-x)^{\alpha}-(-x)^{\alpha} \right|^2 |x|^{-2\gamma}dx+\int_0^{v\wedge s} 
(s-x)^{2\alpha} x^{-2\gamma}dx\\
= & \, \int_0^v  \left|(s+x)^{\alpha}-x^{\alpha} \right|^2 x^{-2\gamma}dx+
\int_0^{v\wedge s} (s-x)^{2\alpha} x^{-2\gamma}dx\\
 \le& \,  \int_0^v    x^{2\alpha-2\gamma}dx +\int_0^{v\wedge s} \left(v\wedge s-x \right)^{2\alpha} x^{-2\gamma}dx\\
 =& \frac{1}{2\alpha-2\gamma+1} v^{2\alpha-2\gamma+1}+  \mathcal B(2\alpha+1, 1-2\gamma)
 (v\wedge s)^{2\alpha-2\gamma+1}\\
 \le & \left(\frac{1}{2\alpha-2\gamma+1}+ \mathcal B(2\alpha+1, 1-2\gamma)\right) v^{2H}.
 \end{align*}

If $\beta\in [1/2, 1+\gamma)$, then  $\alpha\ge0$.  In this case, by using the inequality  
$\left|(s-x)^{\alpha}_+-(-x)_+^{\alpha}\right|\le  2 u^{\alpha}$ for all $s\in [0, u]$ and  $|x| \le u$,  we have
   \begin{align*}
\| X_1(s)\|_2^2\le \, 4 u^{2\alpha}\int_{|x|\le v} |x|^{-2\gamma}dx
=\,  \frac{4}{  1-2\gamma}   v^{1-2\gamma}u^{2\beta-1}.
\end{align*}

(b).  For any $0\le s\le t$,  by using the inequality $\left|(s+x)^{\alpha}-x^{\alpha}\right| \le |\alpha|x^{\alpha-1}s$ 
for any $\alpha\in (-1/2+\gamma, 1/2), s, x\ge0$, we have
 \begin{align*}
 \|X_2(s)\|_2^2=&\, \int_{|x|\ge v}\left((s-x)^{\alpha}_+-(-x)_+^{\alpha} \right)^2|x|^{-2\gamma}dx\\
 =&\, \int_{x\ge v}\left((s+x)^{\alpha}-x^{\alpha} \right)^2x^{-2\gamma}dx\\
 \le &\,  |\alpha|^2 s^2 \int_{x\ge v} x^{2\alpha-2\gamma-2}dx\\
 =&\, \frac{|\alpha|^2}{ 1+2\gamma-2\alpha } s^2 v^{2\beta-2\gamma-2}.
\end{align*}
This implies \eqref{Eq: B12}.
  \end{proof}  
 
\begin{proof}[Proof of Lemma \ref{lem covering number}] 
We use the argument in the proof of Lemma 4.1 in \cite{WX2021}. Since the proofs of \eqref{Eq cov Y} 
and \eqref{Eq cov Z} are similar, we only  prove \eqref{Eq cov Z} here. 

It follows from \eqref{Eq: Zmoment1} that  there exists a constant $c_{3,11}>0$ such that  for any $t>s>0$
 \begin{align}\label{eq d12-1}
d_{Z}(0,t)=\|Z(t)-Z(0)\|_2\le c_{3,11}|t|^H,
\end{align}
 and 
\begin{align}\label{eq d12-2}
d_{Z}(s,t)=\|Z(s)-Z(t)\|_2\le \frac{c_{3,11}}{s^{\gamma}}|t-s|^{\beta}.
\end{align}

Let $t_0=0, t_1 = \varepsilon^{1/H}$. For  any $n \ge 2$, if $t_{n-1}$ has been defined, we define
\begin{equation}\label{def:tn}
t_n = t_{n-1} + t_{n-1}^{\frac \gamma {\beta}} \varepsilon^{\frac 1 {\beta}}.
\end{equation}
It follows from \eqref{eq d12-2} that for all $n\ge2$,
\begin{equation*}  
\begin{split}
\|Z(t_n) - Z(t_{n-1})\|_2 &\le  \frac {c_{3,11}} {t_{n-1}^\gamma} \big| t_n - t_{n-1}\big|^{ \beta} 
\le  c_{3,11} \varepsilon.
\end{split}
\end{equation*}
Hence $d_{Z}(t_n,\, t_{n-1}) \le  \,c_{3,11} \varepsilon$ for all $n \ge 1$.

Since $[0, u]$ can be covered by the intervals $[t_{n-1}, \, t_n]$ for $n = 1, 2, \ldots, L_\varepsilon$, where 
$L_\varepsilon$ is the largest integer $n$ such that $t_n \le b$, we have $N ([0, b], d_Z, \varepsilon) 
\le L_\varepsilon+1 \le 2L_\varepsilon$.

In order to estimate $L_\varepsilon$, we write $t_n = a_n \varepsilon^{1/H}$ for all $n \ge 1$.
Then, by \eqref{def:tn}, we have $a_1=1$ and
\begin{equation*} 
a_n = a_{n-1} + a_{n-1}^{\frac \gamma {\beta}}, \quad \forall \, n \ge 2.
\end{equation*}
 Denote by $\rho  = \frac{H}{\beta}$. By (4.9) in \cite{WX2021}, we know  that there exist positive and finite 
constants $c_{3,12} \le  2^{-\gamma/(H\rho)}\rho^{1/\rho}$ and $c_{3,13} \ge 1$ such that 
\begin{equation}\label{Eq:an}
c_{3,12}\, n^{1/\rho} \le a_n \le c_{3,13}\, n^{1/\rho}, \qquad \forall \, n \ge 1.
\end{equation} 
By \eqref{Eq:an}, we have
\begin{equation*}\label{Eq:Lep}
L_\varepsilon = \max\left\{n; a_n \varepsilon^{\frac 1 H} \le b \right\} \le  c_{3,12}^{-\rho}\, b^{\rho}\,\varepsilon^{- \frac 1 {\beta}}.
\end{equation*}
This implies that for all   $\varepsilon \in (0, 1)$,
\begin{align*} 
N \big([0, b], d_{Z}, \varepsilon\big) \le 2 c_{3,12}^{-\rho}\,b^{\rho}\,  \varepsilon^{- \frac 1 {\beta}}.
\end{align*}  
The proof of Lemma \ref{lem covering number} is complete.
 \end{proof}

We end this section with the following lemma from Talagrand \cite{Tal96}, which will be used to prove 
the necessary parts in Theorems \ref{thm main 0} and \ref{thm main infty}. 

\begin{lemma}\cite[Corollary 2.3]{Tal96} \label{lem GBC}
Let $J \subseteq {\mathbb N}$. If there exist some positive numbers $K_5$ and $\varepsilon$ such 
that for all $i\in J$,
\begin{align}
\sum_{j<i}\mathbb P(A_i\cap A_j)\le \mathbb P(A_i)\bigg(K_5+(1+\varepsilon)\sum_{j<i}\mathbb P(A_j)\bigg),
\end{align}
and assume that $\sum_{i\in J}\mathbb P(A_i)\ge (1+2K_5)/\varepsilon$, then we have 
$\mathbb P\big(\bigcup_{i\in J}A_i\big)\ge (1+2\varepsilon)^{-1}$.
\end{lemma}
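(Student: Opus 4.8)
The plan is to prove this by the second moment method (the Paley--Zygmund / Chung--Erd\H{o}s inequality), which is the natural tool for turning a control on the pairwise correlations $\mathbb P(A_i\cap A_j)$ into a lower bound on $\mathbb P\big(\bigcup_i A_i\big)$. First I would reduce to the case of a finite index set. Writing $J_m := J\cap\{1,\dots,m\}$, each $J_m$ is downward closed in $J$, so for $i\in J_m$ the sum $\sum_{j<i,\,j\in J}$ coincides with $\sum_{j<i,\,j\in J_m}$ and the correlation hypothesis is inherited verbatim. Since $\mathbb P\big(\bigcup_{i\in J_m}A_i\big)\uparrow \mathbb P\big(\bigcup_{i\in J}A_i\big)$ by continuity from below and $\sum_{i\in J_m}\mathbb P(A_i)\uparrow\sum_{i\in J}\mathbb P(A_i)\ge(1+2K_5)/\varepsilon$, it suffices to establish the bound for each finite $J_m$ and then let $m\to\infty$, using that the final estimate is monotone in the partial sum.

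For a finite index set, set $N:=\sum_i\mathbbm{1}_{A_i}$ and $S:=\mathbb EN=\sum_i\mathbb P(A_i)$. The Cauchy--Schwarz inequality applied to $N=N\,\mathbbm{1}_{\{N>0\}}$ yields the Paley--Zygmund bound
\[
\mathbb P\Big(\bigcup_i A_i\Big)=\mathbb P(N>0)\ge\frac{(\mathbb EN)^2}{\mathbb EN^2}.
\]
I would then expand the second moment as $\mathbb EN^2=S+2\sum_{j<i}\mathbb P(A_i\cap A_j)$, insert the hypothesis, and use $\sum_{j<i}\mathbb P(A_i)\mathbb P(A_j)\le S^2/2$ (discarding the nonnegative diagonal term $\sum_i\mathbb P(A_i)^2$) to obtain
\[
\mathbb EN^2\le S+2K_5S+(1+\varepsilon)S^2=(1+2K_5)S+(1+\varepsilon)S^2.
\]
Combining the two displays gives $\mathbb P\big(\bigcup_i A_i\big)\ge S\big/\big[(1+2K_5)+(1+\varepsilon)S\big]$.

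Finally, the assumption $S\ge(1+2K_5)/\varepsilon$ gives $(1+2K_5)\le\varepsilon S$, so the denominator is at most $(1+2\varepsilon)S$ and the right-hand side is at least $(1+2\varepsilon)^{-1}$, as claimed; since $x\mapsto x/[(1+2K_5)+(1+\varepsilon)x]$ is increasing, this lower bound persists in the $m\to\infty$ limit. I do not expect a serious obstacle here --- the argument is a clean quadratic estimate --- but the step requiring the most care is the second-moment bookkeeping (the symmetry factor $2$ and the harmless discarding of $\sum_i\mathbb P(A_i)^2$), together with verifying that the correlation hypothesis is genuinely inherited by the finite initial segments $J_m$, which is what makes the passage from finite to infinite $J$ legitimate.
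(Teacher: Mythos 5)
Your proof is correct. The paper itself does not prove this lemma---it is quoted directly from Talagrand \cite[Corollary 2.3]{Tal96}---and your argument (the Chung--Erd\H{o}s/Paley--Zygmund second-moment bound $\mathbb P(N>0)\ge(\mathbb EN)^2/\mathbb EN^2$, expansion of $\mathbb EN^2$, insertion of the correlation hypothesis, and the monotone passage from the finite truncations $J_m$ to $J$) is essentially the same derivation Talagrand gives there, where the corollary is deduced from the Chung--Erd\H{o}s inequality stated as his Lemma 2.2.
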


\section{Proof of Theorem \ref{thm main 0}}
\subsection{Sufficiency of the integral condition}

In this part, we always assume that $\xi(t)$ is a nondecreasing continuous   function such that $\xi(t)/t^H:(0,\e^{\e}]\rightarrow (0,\infty)$ is 
bounded and $I_0(\xi)<+\infty$. We   prove that $\xi(t)\le M(t)$ for all $t$ small enough in probability one.

Using the argument in the proof of \cite[Lemma 3.1]{Tal96},  we first prove the following result. 
\begin{lemma}\label{lem function10} Suppose that $\xi(t)$ is a nondecreasing continuous function such that 
$\xi(t)/t^H$ is bounded and $I_0(\xi)<+\infty$. Then
\begin{align}\label{eq f10}
\lim_{t\rightarrow0}\frac{\xi(t)}{t^H}=0.
\end{align}
\end{lemma}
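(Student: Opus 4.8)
The plan is to argue by contradiction, showing that if $\xi(t)/t^H$ did not tend to $0$ then the integral $I_0(\xi)$ in \eqref{eq 01} would diverge. First I would abbreviate $g(t):=\xi(t)/t^H$ and record the hypothesis that $g$ is bounded, say $g(t)\le K$ for all $t\in(0,\e^{-\e}]$. Assume toward a contradiction that $g(t)\not\to 0$ as $t\to 0+$. Then there exist a constant $c>0$ and a sequence $s_n\downarrow 0$ with $g(s_n)\ge c$ for every $n$.

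The key step is to upgrade the pointwise lower bound $g(s_n)\ge c$ to a lower bound valid on an entire interval, by exploiting the monotonicity of $\xi$. For $t\in[s_n,2s_n]$ we have $\xi(t)\ge \xi(s_n)=g(s_n)\,s_n^H\ge c\,s_n^H$ since $\xi$ is nondecreasing, while $t^H\le (2s_n)^H=2^H s_n^H$. Hence $g(t)=\xi(t)/t^H\ge c\,2^{-H}=:c_1>0$ throughout $[s_n,2s_n]$. In particular, on each such interval the argument $\theta=g(t)$ of the integrand lies in the compact set $[c_1,K]$. I would then invoke the fact that $\theta\mapsto \theta^{-1/\beta}\varphi(\theta)$ is continuous and strictly positive (by \eqref{eq varphi1}, $\varphi(\theta)>0$), so it attains a positive minimum $m>0$ on $[c_1,K]$. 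This gives the interval estimate $\int_{s_n}^{2s_n} g(t)^{-1/\beta}\varphi(g(t))\,\frac{dt}{t}\ge m\int_{s_n}^{2s_n}\frac{dt}{t}=m\ln 2$. Here the boundedness assumption on $\xi(t)/t^H$ is genuinely used: since $\varphi(\theta)\to 1$ while $\theta^{-1/\beta}\to 0$ as $\theta\to\infty$, the integrand is \emph{not} bounded below on $[c_1,\infty)$, so confining $\theta$ to $[c_1,K]$ is essential.

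Finally I would pass to a subsequence $(s_{n_k})$, decreasing to $0$, chosen so that $s_{n_{k+1}}<s_{n_k}/2$ and $2s_{n_1}\le \e^{-\e}$; this makes the intervals $[s_{n_k},2s_{n_k}]$ pairwise disjoint and contained in $(0,\e^{-\e}]$. Summing the interval estimates over $k$ yields $I_0(\xi)\ge \sum_k m\ln 2=+\infty$, contradicting the assumption $I_0(\xi)<+\infty$. Therefore $g(t)=\xi(t)/t^H\to 0$ as $t\to 0+$, which is \eqref{eq f10}. The main obstacle, and the only nonroutine part, is the interval-promotion step: it is where monotonicity of $\xi$ and the boundedness of $g$ must be combined, and where one must verify that the integrand admits a strictly positive lower bound over the confined range $[c_1,K]$ — a point that would fail without the boundedness hypothesis.
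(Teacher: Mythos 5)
Your proof is correct, and it differs from the paper's in how the integrand is bounded below. Both arguments share the same core device — the interval-promotion step: monotonicity of $\xi$ gives $\xi(u)/u^H\ge 2^{-H}\,\xi(t)/t^H$ for $u\in[t,2t]$, and the measure $du/u$ then yields a contribution of at least $\log 2$ times a lower bound on the integrand over each dyadic block. Where you diverge is in producing that lower bound: you argue by contradiction along a sequence $s_n\downarrow 0$, confine $\theta=\xi(t)/t^H$ to the compact set $[c_1,K]$, and take a positive minimum there; the paper instead proves a quantitative bound valid for \emph{every} $t$, namely $\int_t^{2t}\ge \log 2\cdot\min\bigl\{\bigl(\xi(t)/(2t)^H\bigr)^{-1/\beta}\varphi\bigl(\xi(t)/(2t)^H\bigr),\,c_{4,1}\bigr\}$ with $c_{4,1}=\inf\{\varphi(\theta)/\theta^{1/\beta}:\theta_0\le\theta\le\sup\xi(t)/t^H\}$, and then uses Lemma \ref{lem increase} (the monotonicity of $\theta^{-1/\beta}\varphi(\theta)$ on $(0,\theta_0)$, which rests on the derivative estimate \eqref{eq psi'} of Lemma \ref{lem psi'}) to convert the resulting convergence of the integrand values into convergence of $\xi(t)/t^H$ itself. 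Your route avoids Lemmas \ref{lem psi'} and \ref{lem increase} entirely, which makes it more self-contained; the paper's version is slightly stronger in form (a bound for all $t$, not just along a subsequence), though for the statement of the lemma both suffice. One small repair: you should not invoke continuity of $\theta\mapsto\theta^{-1/\beta}\varphi(\theta)$ — that would itself require an argument (it follows from the convexity of $\psi$, i.e.\ Borell's theorem) — and it is also unnecessary, since on $[c_1,K]$ the monotonicity of $\varphi$ already gives the explicit positive lower bound $\theta^{-1/\beta}\varphi(\theta)\ge K^{-1/\beta}\varphi(c_1)>0$, with $\varphi(c_1)>0$ guaranteed by \eqref{eq varphi1}.
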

\begin{proof} Recall the number $\theta_0$ in Lemma \ref{lem increase}. By the boundedness  and the 
continuity of $\xi(t)/t^H$, we know 
$$
c_{4,1}:=\inf\left\{\frac{\varphi(\theta)}{\theta^{1/\beta}}; \, \theta_0\le \theta\le \sup_{t>0} \frac{\xi(t)}{t^H} \right\}>0.
$$ 
For any $t\le u\le 2t$, we have
$$
\frac{\xi(u)}{u^H}\ge  \frac{\xi(t)}{(2t)^H}.
$$
By Lemma \ref{lem increase}, we know that the function $\varphi(\theta)/\theta^{1/\beta}$ is increasing   
in $(0,\theta_0)$. Thus for any $t\le u\le 2t$,
$$
\left(\frac{\xi(u)}{u^H}\right)^{-1/\beta} \varphi\left(\frac{\xi(u)}{u^H} \right)\ge \min\left\{
\left(\frac{\xi(t)}{(2t)^H}\right)^{-1/\beta} \varphi\left(\frac{\xi(t)}{(2t)^H} \right), \, c_{4,1}  \right\}.
$$
Consequently, we get 
$$
\int_t^{2t} 
\left(\frac{\xi(u)}{u^H}\right)^{-1/\beta} \varphi\left(\frac{\xi(u)}{u^H} \right)\frac{du}{u} \ge \log 2\cdot
 \min\left\{
\left(\frac{\xi(t)}{(2t)^H}\right)^{-1/\beta} \varphi\left(\frac{\xi(t)}{(2t)^H} \right),\, c_{4,1} \right\}.
$$
This, together with  the  fact that  $I_0(\xi)<+\infty$ and the monotonicity of $\varphi(\theta)/\theta^{1/\beta}$ in 
$(0,\theta_0)$, implies \eqref{eq f10}. The proof is complete.
 \end{proof}

In order to prove  the sufficiency in Theorem \ref{thm main 0}, we construct the sequences 
$\{t_n\}_{n\ge1}$,  $\{u_n\}_{n\ge1}$ and $\{v_n\}_{n\ge1}$ by recursion as follows. 
Let $L>2H$ be a  constant, whose value will be chosen later.   We start with $t_1=\e^{-\e}$. Having 
constructed $t_n$, we set
\begin{align} 
u_{n+1}:=&\inf\Big\{u<t_n; \, u+t_{n}^{\frac{\gamma}{\beta}}\xi(u)^{\frac1\beta}\ge t_n \Big\};\\
v_{n+1}:=&\, \inf\Bigg\{u<t_n;\, \xi(u)\bigg( 1+L\left(\frac{\xi(u)}{u^H} \right)^{1/\beta}\bigg)\ge \xi(t_n) \Bigg\}; 
\label{eq vn+11}\\
t_{n+1}:=&\, \max\{u_{n+1},\, v_{n+1}\}.\label{eq tn+110}
\end{align}
By the continuity of $\xi$, we have 
\begin{align}\label{eq un+11}
u_{n+1}=\, t_n-t_{n}^{\frac{\gamma}{\beta}}\xi(u_{n+1})^{\frac1\beta}.
 \end{align}
 
\begin{lemma}\label{lem 00}   The sequence $\{t_n\}_{n\ge1}$ is decreasing and 
\begin{equation}\label{eq tn0}
\lim_{n\rightarrow\infty} t_n=0.
\end{equation}
\end{lemma}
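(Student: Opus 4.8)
The plan is to first establish that the three interlocking recursions produce a well-defined, strictly decreasing, bounded sequence, and then to exclude a strictly positive limit by a compactness-and-continuity argument.

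First I would fix $n$ and set $g_n(u):=u+t_n^{\gamma/\beta}\xi(u)^{1/\beta}$ and $h_n(u):=\xi(u)\bigl(1+L(\xi(u)/u^H)^{1/\beta}\bigr)$ on $(0,t_n)$. Both are continuous, and $g_n$ is strictly increasing. Since $\xi$ is nondecreasing with $\xi(t)/t^H$ bounded we have $\xi(0+)=0$, and Lemma \ref{lem function10} gives $\xi(u)/u^H\to0$ as $u\to0+$; hence $g_n(0+)=h_n(0+)=0$, while $g_n(t_n^-)=t_n+t_n^{\gamma/\beta}\xi(t_n)^{1/\beta}>t_n$ and $h_n(t_n^-)>\xi(t_n)>0$. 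By the intermediate value theorem the defining sets are nonempty, so $u_{n+1},v_{n+1}\in(0,t_n)$ and therefore $t_{n+1}=\max\{u_{n+1},v_{n+1}\}\in(0,t_n)$; this already shows $\{t_n\}$ is strictly decreasing and bounded below, hence $t_n\downarrow\ell$ for some $\ell\ge0$. The strict monotonicity and continuity of $g_n$ give the identity \eqref{eq un+11}, namely $u_{n+1}=t_n-t_n^{\gamma/\beta}\xi(u_{n+1})^{1/\beta}$, while the continuity of $h_n$ together with the definition of the infimum gives $h_n(v_{n+1})=\xi(t_n)$.

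Next I would argue by contradiction, assuming $\ell>0$, so that $\xi(\ell)>0$ and $t_{n+1}\to\ell$. Since $u_{n+1},v_{n+1}<t_n$, all their subsequential limits lie in $[0,\ell]$. I claim $\limsup_n u_{n+1}<\ell$: along any subsequence with $u_{n+1}\to u^\ast$, letting $n\to\infty$ in $t_n-u_{n+1}=t_n^{\gamma/\beta}\xi(u_{n+1})^{1/\beta}$ and using continuity of $\xi$ yields $\ell-u^\ast=\ell^{\gamma/\beta}\xi(u^\ast)^{1/\beta}$; the choice $u^\ast=\ell$ forces $\xi(\ell)=0$ and the choice $u^\ast=0$ forces $\ell=0$, both impossible, so every subsequential limit lies in $(0,\ell)$ and, the set of such limits being closed, $\limsup_n u_{n+1}<\ell$.

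The analogous step for $v_{n+1}$ is the main obstacle, because $h_n$ is not known to be monotone, so I must work directly with the level-crossing identity $h_n(v_{n+1})=\xi(t_n)$. Along any subsequence with $v_{n+1}\to v^\ast$, continuity of $\xi$ and of $u\mapsto\xi(u)/u^H$ on $(0,t_1]$ lets me pass to the limit: $v^\ast=\ell$ forces $\xi(\ell)\bigl(1+L(\xi(\ell)/\ell^H)^{1/\beta}\bigr)=\xi(\ell)$, hence $\xi(\ell)=0$; and $v^\ast=0$ makes the left side tend to $0$ while the right side tends to $\xi(\ell)>0$. Both are impossible, so $\limsup_n v_{n+1}<\ell$ as well. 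Finally, since $\limsup_n\max\{u_{n+1},v_{n+1}\}=\max\{\limsup_n u_{n+1},\limsup_n v_{n+1}\}<\ell$, this contradicts $t_{n+1}\to\ell$; therefore $\ell=0$, which is \eqref{eq tn0}.
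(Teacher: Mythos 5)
Your proof is correct, but the contradiction is derived by a different mechanism than in the paper, so a comparison is worthwhile. Both arguments suppose $t_n\downarrow \ell>0$ and both rest on the same two facts: the identity \eqref{eq un+11} and the level-crossing equality $\xi(v_{n+1})\bigl(1+L(\xi(v_{n+1})/v_{n+1}^H)^{1/\beta}\bigr)=\xi(t_n)$. The paper, however, proceeds by an eventual dichotomy: if $t_{n+1}=u_{n+1}$ then \eqref{eq un+11} gives $t_n-t_{n+1}=t_n^{\gamma/\beta}\xi(t_{n+1})^{1/\beta}\ge \ell^{\gamma/\beta}\xi(\ell)^{1/\beta}>0$, which is incompatible with $t_n-t_{n+1}\to 0$, so $t_n=v_n$ for all large $n$; then the level-crossing identity yields the fixed multiplicative gap $\xi(t_{n-1})\ge \xi(t_n)\bigl(1+L(\xi(\ell)/t_1^H)^{1/\beta}\bigr)$, contradicting $\xi(t_n)\to\xi(\ell)>0$. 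You instead treat the two recursions symmetrically: passing to the limit in each identity along subsequences, you rule out $\ell$ (and $0$) as a subsequential limit of either $u_{n+1}$ or $v_{n+1}$, whence $\limsup_n t_{n+1}=\max\{\limsup_n u_{n+1},\,\limsup_n v_{n+1}\}<\ell$, contradicting $t_{n+1}\to\ell$. Your compactness route costs more bookkeeping but buys two things: you never need to identify which of $u_n,v_n$ realizes the maximum, and you avoid the geometric iteration entirely. You also verify the well-posedness facts that the paper treats as evident, namely that the infima defining $u_{n+1},v_{n+1}$ lie in $(0,t_n)$ and that the level-crossing equality actually holds at $v_{n+1}$; the latter is a genuine (if small) addition, since $h_n$ is not known to be monotone and the equality does require the two one-sided estimates you give.
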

\begin{proof} By the construction of $\{t_n\}_{n\ge1}$,  it is obvious to see that $\{t_n\}_{n\ge1}$ is 
decreasing and $t_{\infty}:=\lim_{n\rightarrow\infty}t_n\ge0$ exists.   Next, we prove  $t_{\infty}=0$. 
Suppose, otherwise, that $t_n\ge t_{\infty}>0$ for all $n\ge1$.  By the continuity of $\xi$, we 
have  $ \lim_{n\rightarrow \infty}\xi(t_n)=\xi(t_{\infty})>0$.    By \eqref{eq un+11}, we have $t_n=v_n$ 
for $n$ large enough but then. Since $\xi$ is continuous, we have that  for all $n$ large enough,
$$
\xi(t_{n})=\xi(t_{n+1}) \Bigg( 1+L\bigg(\frac{\xi(t_{n+1})}{t_{n+1}^H} \bigg)^{1/\beta} \Bigg)
\ge\xi(t_{n+1}) \Bigg( 1+L\bigg(\frac{\xi(t_{\infty})}{t_1^H} \bigg)^{1/\beta} \Bigg),
$$
which  contradicts with the convergence of $\{\xi(t_n)\}_{n\ge1}$.  The proof is complete.
\end{proof}

\begin{lemma}\label{lem Mtn0}
If $M(t_n)\ge \xi(t_n)\Big(1+L\big(\frac{\xi(t_n) }{t_n^H}\big)^{1/\beta}\Big)$ for all $n\ge n_0$, then 
$M(t)\ge \xi(t)$ for   all $t\in (0,  t_{n_0}]$.
\end{lemma}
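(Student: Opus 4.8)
The plan is to use the monotonicity of both $M$ and $\xi$ together with the construction of $\{t_n\}$ to fill in the values of $t$ lying strictly between consecutive terms of the sequence. By Lemma \ref{lem 00} the sequence $\{t_n\}_{n\ge 1}$ is strictly decreasing to $0$, so the intervals $(t_{n+1},t_n]$, $n\ge n_0$, form a partition of $(0,t_{n_0}]$. First I would fix $t\in(0,t_{n_0}]$ and pick the unique index $n\ge n_0$ with $t_{n+1}<t\le t_n$. Since $M(s)=\sup_{0\le r\le s}|X(r)|$ is nondecreasing and $t>t_{n+1}$, the hypothesis applied at the index $n+1$ gives
\[
M(t)\ge M(t_{n+1})\ge \xi(t_{n+1})\left(1+L\left(\frac{\xi(t_{n+1})}{t_{n+1}^{H}}\right)^{1/\beta}\right).
\]
As $\xi$ is nondecreasing and $t\le t_n$, we have $\xi(t)\le\xi(t_n)$, so it suffices to prove that the right-hand side above is at least $\xi(t_n)$.

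To establish this I would use the defining property of $v_{n+1}$. Write $g(u):=\xi(u)\big(1+L(\xi(u)/u^{H})^{1/\beta}\big)$. By \eqref{eq tn+110} we have $t_{n+1}=\max\{u_{n+1},v_{n+1}\}\ge v_{n+1}$. Since $g(t_n)=\xi(t_n)\big(1+L(\xi(t_n)/t_n^{H})^{1/\beta}\big)>\xi(t_n)$, continuity shows that the super-level set $\{u<t_n:\,g(u)\ge\xi(t_n)\}$ appearing in \eqref{eq vn+11} contains a left-neighbourhood of $t_n$, hence is nonempty with infimum $v_{n+1}<t_n$; because $g$ is continuous and $g<\xi(t_n)$ just to the left of $v_{n+1}$, this forces $g(v_{n+1})=\xi(t_n)$. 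It then remains to compare $g(t_{n+1})$ with $g(v_{n+1})$. When $t_{n+1}=v_{n+1}$ this is an equality and we are done; in general I would invoke the monotonicity of $g$, which is a product of two nonnegative nondecreasing factors once one knows that $\xi$ and $\xi(u)/u^{H}$ are nondecreasing (the latter holding for the functions $f_\lambda$ to which the theorem is ultimately applied). This yields $g(t_{n+1})\ge g(v_{n+1})=\xi(t_n)\ge\xi(t)$, and therefore $M(t)\ge\xi(t)$, as claimed.

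The routine parts are the partition of $(0,t_{n_0}]$ and the two reductions using monotonicity of $M$ and $\xi$. The main obstacle is the final comparison $g(t_{n+1})\ge\xi(t_n)$ in the case $t_{n+1}=u_{n+1}>v_{n+1}$: since the ratio $\xi(u)/u^{H}$ is not assumed monotone in the statement, one must ensure that $t_{n+1}$ still lies in the super-level set $\{u<t_n:\,g(u)\ge\xi(t_n)\}$, i.e.\ that this set is the whole interval $[v_{n+1},t_n)$. Here I expect to use Lemma \ref{lem function10} (so that $\xi(u)/u^{H}\to 0$) together with the monotonicity of $\xi$ to control the shape of $g$ on $[v_{n+1},t_n)$ and rule out a dip below the level $\xi(t_n)$ between $v_{n+1}$ and $t_{n+1}$.
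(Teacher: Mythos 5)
You follow exactly the paper's route: on each interval $(t_{n+1},t_n]$ you run the chain $\xi(t)\le\xi(t_n)\le \xi(t_{n+1})\bigl(1+L(\xi(t_{n+1})/t_{n+1}^H)^{1/\beta}\bigr)\le M(t_{n+1})\le M(t)$, using Lemma \ref{lem 00} for the partition and monotonicity of $M$ and $\xi$ at the two ends; this is precisely the paper's proof, which justifies the middle inequality only by citing \eqref{eq vn+11} and \eqref{eq tn+110}. Your treatment of the case $t_{n+1}=v_{n+1}$ (continuity of $g(u):=\xi(u)\bigl(1+L(\xi(u)/u^H)^{1/\beta}\bigr)$ at the infimum of the super-level set, forcing $g(v_{n+1})=\xi(t_n)$) is correct and is all that is needed there.

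The case $t_{n+1}=u_{n+1}>v_{n+1}$, which you flag as the main obstacle, is a genuine gap in your write-up, and neither of your proposed repairs works. Monotonicity of $\xi(u)/u^H$ is nowhere assumed in Theorem \ref{thm main 0}(a): the lemma must serve arbitrary nondecreasing continuous $\xi$ with $I_0(\xi)<\infty$, not only the functions $f_\lambda$, so your ``in general'' fallback is unavailable. Nor can Lemma \ref{lem function10} rule out a dip of $g$ below the level $\xi(t_n)$: on any interval where $\xi$ is constant, say $\xi\equiv\delta$, one has $g(u)=\delta+L\delta^{1+1/\beta}u^{-H/\beta}$, which is \emph{decreasing} in $u$, and one can construct an admissible $\xi$ (a steep rise just below $t_n$, then a long constancy interval) for which the super-level set $\{u<t_n:\,g(u)\ge\xi(t_n)\}$ is disconnected, $v_{n+1}$ sits far below $u_{n+1}$, and $g(u_{n+1})<\xi(t_n)$. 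So no argument about the ``shape of $g$'' can close this case under the literal reading of \eqref{eq vn+11} as the infimum of a super-level set. What the chain actually requires — and what the paper's one-line citation of \eqref{eq vn+11} implicitly uses, here and again in Lemma \ref{lem finite}(a) — is that $g\ge\xi(t_n)$ on all of $[v_{n+1},t_n)$, i.e., that $v_{n+1}$ be the last crossing $\sup\{u<t_n:\,g(u)<\xi(t_n)\}$ rather than the first point of the super-level set. Under that reading of the construction both your argument and the paper's are complete; without it, the step you isolated cannot be proved, so the honest fix is to adjust the reading (or definition) of $v_{n+1}$, not to seek extra properties of $g$.
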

\begin{proof} Assume $n\ge n_0$ and 
$t_{n+1}< t\le t_{n}$.  By   \eqref{eq vn+11} and \eqref{eq tn+110}, we have
$$
\xi(t)\le \xi(t_n)\le  \xi(t_{n+1})  \Bigg( 1+L\bigg(\frac{\xi(t_{n+1})}{t_{n+1}^H} \bigg)^{1/\beta} \Bigg) 
\le M({t_{n+1}})\le M(t).
$$
Thus, $M(t)\ge \xi(t)$ for all $t\in (0,t_{n_0}]$, as desired. 
\end{proof}

By \eqref{eq M1}, we have 
\begin{equation}\label{Eq:Mtn}
\begin{split}
\, \mathbb P\Bigg(M(t_n)<\xi(t_n)\bigg(1+L\Big(\frac{\xi(t_n)}{t_n^H}\Big)^{1/\beta} \bigg) \Bigg)
=\,\varphi\Bigg(\frac{\xi(t_n)}{t_n^H}\bigg(1+L\Big(\frac{\xi(t_n)}{t_n^H}\Big)^{1/\beta}\bigg) \Bigg).
\end{split}
\end{equation}
By using the Borell-Cantell lemma and Lemma \ref{lem Mtn0},  the proof of the sufficiency  in  Theorem   
\ref{thm main 0} will be finished if we show that the terms in (\ref{Eq:Mtn}) form a convergent series.
 Applying \eqref{eq control} with 
$\theta= \xi(t_n)/t_n^H$, $\varepsilon=\theta+L\theta^{1+1/\beta}$ (since $\lim_{n\rightarrow\infty}
\xi(t_n)/t_n^H=0$) to see that it suffices to show that 
\begin{align}\label{eq finite}
\sum_{n=1}^{\infty}\varphi\left(\frac{\xi(t_n)}{t_n^H}\right)<+\infty.
\end{align}
This is done  by using the following lemma.
   
 \begin{lemma}\label{lem finite}
 \begin{itemize}
 \item[(a)] If $n$ is large enough and $t_{n}=u_{n}$, then we have 
 \begin{align}\label{eq tun1}
 \varphi\bigg(\frac{\xi(t_{n})}{t_{n}^H}\bigg)\le c_{4,2}\int_{t_{n}}^{t_{n-1}} 
 \left(\frac{\xi(t)}{t^H}\right)^{-1/\beta}\varphi\left(\frac{\xi(t)}{t^H}\right) \frac{dt}{t},
 \end{align}
 where $c_{4,2}$ depends on $\beta$ and $H$ only. 
 
 \item[(b)] If $n$ is large enough and $t_{n}=v_{n}$, then there exists  a  constant $L>2H$ depending on 
 $\beta$ and $H$ only such that 
 \begin{align}\label{eq tvn1}
      \varphi\bigg(\frac{\xi(t_{n})}{t_{n}^H}\bigg)\le \frac12 \varphi\left(\frac{\xi(t_{n-1})}{t_{n-1}^H}\right).
  \end{align}  
 \end{itemize}
 \end{lemma}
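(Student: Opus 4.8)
The plan is to read off from the recursion the exact equation satisfied at $t_n$ in each of the two cases and then convert it into an estimate on $\varphi$ using the regularity Lemmas \ref{lem psi'}, \ref{lem dom} and \ref{lem increase}. Throughout I write $p_n := \xi(t_n)/t_n^H$; by Lemma \ref{lem function10} we have $p_n \to 0$, so for all large $n$ any quantity $\xi(t)/t^H$ with $t \in (0, t_{n-1}]$ lies in $(0,\theta_0)$, where those lemmas apply. The one structural identity I use repeatedly is $\gamma/\beta - 1 = -H/\beta$, which turns $t^{\gamma/\beta}\xi(t)^{1/\beta}$ (the increment length in the definitions of $u_n$ and $v_n$) into $t\,(\xi(t)/t^H)^{1/\beta}$.

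For part (a), shifting the index in \eqref{eq un+11} and using $t_n=u_n$ gives $t_{n-1}-t_n = t_{n-1}^{\gamma/\beta}\xi(t_n)^{1/\beta}$, i.e. $(t_{n-1}-t_n)/t_{n-1} = \theta^{1/\beta}$ with $\theta := \xi(t_n)/t_{n-1}^H$. First I would bound the integral from below: since $\xi$ is nondecreasing and $t\mapsto t^H$ increasing, $\xi(t)/t^H \ge \theta$ on $[t_n,t_{n-1}]$, and as $\theta^{-1/\beta}\varphi(\theta)$ is increasing on $(0,\theta_0)$ (Lemma \ref{lem increase}), the elementary bound $-\log(1-x)\ge x$ yields
\[
\int_{t_n}^{t_{n-1}}\Big(\tfrac{\xi(t)}{t^H}\Big)^{-1/\beta}\varphi\Big(\tfrac{\xi(t)}{t^H}\Big)\frac{dt}{t}
\;\ge\; \theta^{-1/\beta}\varphi(\theta)\,\log\frac{t_{n-1}}{t_n}
\;\ge\; \theta^{-1/\beta}\varphi(\theta)\,\theta^{1/\beta} = \varphi(\theta).
\]
It then remains to trade $\theta=\xi(t_n)/t_{n-1}^H$ for $p_n = \xi(t_n)/t_n^H$ at the cost of a constant. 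Since $(t_{n-1}-t_n)/t_{n-1}=\theta^{1/\beta}\to0$, the ratio $t_{n-1}/t_n\to1$, so $p_n\le 2\theta$ for large $n$ and Lemma \ref{lem dom} applies; using the concavity bound $(t_{n-1}/t_n)^H-1\le H(t_{n-1}/t_n-1)$ one checks $K_3|p_n-\theta|/\theta^{1+1/\beta}\le 2HK_3$, hence $\varphi(p_n)\le e^{2HK_3}\varphi(\theta)$, which gives \eqref{eq tun1} with $c_{4,2}=e^{2HK_3}$.

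For part (b), the continuity of $\xi$ together with \eqref{eq vn+11} gives the exact identity $\xi(t_n)(1+Lp_n^{1/\beta})=\xi(t_{n-1})$, equivalently $p_{n-1}/p_n = (t_n/t_{n-1})^H(1+Lp_n^{1/\beta})$; since $t_n/t_{n-1}<1$ this already forces $p_{n-1}\le 2p_n$ for large $n$. The target \eqref{eq tvn1} is equivalent to $\psi(p_n)-\psi(p_{n-1})\ge\log 2$, so I would produce a lower bound on $p_{n-1}-p_n$. Because $t_n=\max\{u_n,v_n\}\ge u_n$, the same computation as in (a) gives $(t_{n-1}-t_n)/t_{n-1}\le p_{n-1}^{1/\beta}\le 2^{1/\beta}p_n^{1/\beta}$, so $t_n/t_{n-1}\to1$ and, for $n$ large, $(t_n/t_{n-1})^H-1\ge -2Hp_{n-1}^{1/\beta}$. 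Feeding this into the ratio identity yields $p_{n-1}-p_n\ge(\tfrac{L}{2}-2^{1+1/\beta}H)p_n^{1+1/\beta}\ge\tfrac{L}{4}p_n^{1+1/\beta}$ once $L\ge 2^{3+1/\beta}H$. Finally, since $|\psi'|$ is non-increasing and $p_{n-1}\le 2p_n$, Lemma \ref{lem psi'} gives
\[
\psi(p_n)-\psi(p_{n-1})=\int_{p_n}^{p_{n-1}}|\psi'(s)|\,ds\ \ge\ |\psi'(p_{n-1})|\,(p_{n-1}-p_n)\ \ge\ \frac{1}{K_2(2p_n)^{1+1/\beta}}\cdot\frac{L}{4}p_n^{1+1/\beta}=\frac{L}{K_2\,2^{3+1/\beta}},
\]
which exceeds $\log 2$ once $L\ge K_2\,2^{3+1/\beta}\log2$. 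Choosing $L\ge\max\{2^{3+1/\beta}H,\ K_2\,2^{3+1/\beta}\log2\}$, a constant depending only on $\beta$ and $H$ (and in particular $L>2H$), proves \eqref{eq tvn1}.

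The hard part is the bookkeeping in (b): one must simultaneously control $t_n/t_{n-1}$ and choose $L$ uniformly in $n$ and in $\xi$. Both are made possible by the two facts isolated at the start — the a priori estimate $(t_{n-1}-t_n)/t_{n-1}\le p_{n-1}^{1/\beta}$ coming from $t_n\ge u_n$, and $p_n\to0$ from Lemma \ref{lem function10} — which together force $t_n/t_{n-1}\to1$, so that the gain $Lp_n^{1/\beta}$ in the numerator dominates the loss $(t_n/t_{n-1})^H-1$ in the denominator.
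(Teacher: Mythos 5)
Your proof is correct and follows essentially the same route as the paper's: both parts exploit the recursion identities behind $u_n$ and $v_n$, control the ratio $t_{n-1}/t_n$, and conclude via the regularity lemmas for $\varphi$ and $\psi$ (Lemmas \ref{lem psi'}, \ref{lem dom}, \ref{lem increase}). The only cosmetic differences are that in (a) you bound the integrand via the monotonicity of $\theta^{-1/\beta}\varphi(\theta)$ from Lemma \ref{lem increase} (so you never need the comparison $\xi(t_n)\ge \xi(t_{n-1})/2^{\beta}$ that the paper extracts from $u_n\ge v_n$), and in (b) you integrate $|\psi'|$ over $[p_n,p_{n-1}]$ directly rather than using the paper's convexity/tangent-line bound at the intermediate point $\varepsilon=\theta+(L-c_{4,4})\theta^{1+1/\beta}$ --- the substance is identical.
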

 
 We postpone the proof of Lemma \ref{lem finite}. Let us first finish the proof of \eqref{eq finite}, hence 
the sufficiency  in  Theorem   \ref{thm main 0}.  Consider the set   
 $$J:=\big\{n_k\in \mathbb N;\, t_{n_k}=u_{n_k}\ge v_{n_k}\big\}.$$ By \eqref{eq tun1}, we have 
 \begin{equation}\label{eq finite1}
 \begin{split}
 \sum_{n_k\in J}\varphi\left(\frac{\xi(t_{n_k})}{t_{n_k}^H}\right)\le&\,  c_{4,2}\sum_{n_k\in J}
 \int_{t_{n_k}}^{t_{n_{k}-1}}  \left(\frac{\xi(t)}{t^H}\right)^{-1/\beta}\varphi\left(\frac{\xi(t)}{t^H}\right) \frac{dt}{t}\\
 \le &\, c_{4,2} \int_0^{\e^{-\e}}  \left(\frac{\xi(t)}{t^H}\right)^{-1/\beta}\varphi\left(\frac{\xi(t)}{t^H}\right) \frac{dt}{t} 
 <\infty.
 \end{split}
 \end{equation}
 Let $n_{k-1}$ and $n_k$ be two consecutive terms of $J$. If there exists an integer $n$ such that 
 $n_{k-1}<n<n_{k}$, then set $p:=n-n_{k-1}$. Since  $t_{n}=v_{n}$   for all  $n\in \mathbb N\setminus J$, 
 Eq. \eqref{eq tvn1} implies that 
 $$
 \varphi\left(\frac{\xi(t_n)}{t_n^H}\right)\le 2^{-p}  \varphi\bigg(\frac{\xi(t_{n_{k-1}})}{t_{n_{k-1}}^H}\bigg).
 $$
 Thus, we obtain by setting $n_0=1$, 
\begin{equation*} 
 \begin{split}
 \sum_{n\in \mathbb N\setminus J} \varphi\left(\frac{\xi(t_n)}{t_n^H}\right)=&\, \sum_{k=1}^{\infty} 
\sum_{n_{k-1}<n< n_k}  \varphi\left(\frac{\xi(t_n)}{t_n^H}\right) 
\le   \,   \sum_{k=1}^{\infty}  \varphi\bigg(\frac{\xi(t_{n_{k-1}})}{t_{n_{k-1}}^H}\bigg)\\
\le &\, c_{4,2}\int_0^{\e^{-\e}} \left(\frac{\xi(t)}{t^H}\right)^{-1/\beta}\varphi\left(\frac{\xi(t)}{t^H}\right) \frac{dt}{t} 
<+\infty.
 \end{split}
 \end{equation*} 
This, together with \eqref{eq finite1},  implies \eqref{eq finite}. Therefore, the sufficiency  in Theorem \ref{thm main 0} 
 has been proved.
 \qed

 Next, we prove  Lemma \ref{lem finite}.
 
 \begin{proof}[Proof of Lemma \ref{lem finite}]
 (a) Assume  $t_{n}=u_{n}\ge v_{n}$. By \eqref{eq vn+11}, we have
 \begin{align}\label{eq tn+111}
 \xi(t_{n-1})\le \xi(t_{n})\Bigg( 1+L\bigg(\frac{\xi(t_{n})}{t_{n}^H} \bigg)^{1/\beta}\Bigg).
 \end{align}
By \eqref{eq un+11} and the monotonicities of $\varphi$ and $\xi$, we have 
\begin{align*}
   \int_{t_{n}}^{t_{n-1}}\left(\frac{\xi(t)}{t^H}\right)^{-1/\beta}\varphi\left(\frac{\xi(t)}{t^H}\right) \frac{dt}{t}
\ge&\,   
\left(t_{n-1}-t_{n}\right)t_{n-1}^{-\frac{\gamma}{\beta}}\xi\left(t_{n-1}\right)^{-1/\beta}\varphi\left(\frac{\xi(t_{n})}{t_{n-1}^H}\right)\\
=&\,  \left( \frac{\xi(t_{n})}{\xi(t_{n-1})}\right)^{1/\beta} \varphi\left(\frac{\xi(t_{n})}{t_{n-1}^H}\right)\\
\ge&\, 
\frac12\varphi\left(\frac{\xi(t_{n})}{t_{n-1}^H}\right),
\end{align*}
where in the last inequality, we have used the fact that  $\xi\left(t_{n}\right)\ge \xi\left(t_{n-1}\right)/2^{\beta}$ 
for all $n$ large enough   by   \eqref{eq tn+111}, Lemma \ref{lem function10} and Lemma \ref{lem 00}.  

Now,  using \eqref{eq un+11} again, we know that  for all 
 $n$ large enough,
 \begin{align*}
 \frac{\xi(t_{n})}{t_{n-1}^H}=&\, \frac{\xi(t_{n})}{t_{n}^H} \left(\frac{t_{n}}{t_{n-1}}\right)^H
 =\frac{\xi(t_{n})}{t_{n}^H}\Bigg(1- \bigg(\frac{\xi(t_{n})}
 {t_{n-1}^H}\bigg)^{1/\beta} \Bigg)^{H}\\
 \ge &\, \frac{\xi(t_{n})}{t_{n}^H}\Bigg(1-c_{4,3}\bigg(\frac{\xi(t_{n})}{t_{n}^H}\bigg)^{1/\beta} \Bigg). 
 \end{align*}
 Here $c_{4,3}>0$ which only depends on $H$. 
 Thus, \eqref{eq tun1} follows from \eqref{eq control}.
 
 (b). If $t_{n}=v_{n}\ge u_{n}$, then by the continuity of $\xi$ we have 
 \begin{align}\label{eq XLN}
 \xi(t_{n-1})=\xi(t_{n})\Bigg( 1+L\bigg(\frac{\xi(t_{n})}{t_{n}^H} \bigg)^{1/\beta}\Bigg).
  \end{align}
 Since $t_{n}\ge u_{n}$,  it follows from \eqref{eq un+11} and \eqref{eq XLN}  that  
  \begin{equation}\label{eq t n n+1}
 \begin{split}
 \frac{\xi(t_{n-1})}{t_{n-1}^H}=&\, \frac{\xi(t_{n-1})}{t_{n}^H}\left( \frac{t_{n}}{t_{n-1}}\right)^{H}\\
 \ge &\, \frac{\xi(t_{n})}{t_{n}^H}  \Bigg( 1+L\bigg(\frac{\xi(t_{n})}{t_{n}^H} \bigg)^{1/\beta}\Bigg) 
 \Bigg( 1-   \bigg(\frac{\xi(t_{n})}{t_{n-1}^H} \bigg)^{1/\beta}\Bigg)^H \\
    \ge &\, \frac{\xi(t_{n})}{t_{n}^H}  \Bigg( 1+(L-c_{4,4})\bigg(\frac{\xi(t_{n})}{t_{n}^H} \bigg)^{1/\beta}\Bigg),
     \end{split}
 \end{equation} 
 for some positive constant $c_{4,4}$ which only depends on $H$ and $\beta$. 
 
 Set $\theta=\xi(t_{n})/ t_{n}^{H}$, $\varepsilon=\theta+(L-c_{4,4})\theta^{1+1/\beta}$.  It follows 
 from   Lemma \ref{lem function10} that    $\theta< \varepsilon< 2\theta< 1$ for all  $n$ large enough. 
 Hence, by \eqref{eq psi'}  and the convexity of $\psi$,   we have
  \begin{align*}
 \psi(\varepsilon)\le\psi(\theta)+(\varepsilon-\theta)\psi'(2\theta)\le  \psi(\theta)-\frac{\varepsilon-
 \theta}{K_2(2\theta)^{1+1/\beta}}
 \le \psi(\theta)-\frac{L-c_{4,4}}{ 2^{1+1/\beta}K_2}.
 \end{align*} 
 This, together with \eqref{eq t n n+1}, implies that  for all $L$ large enough, 
 $$
 \varphi\left(\frac{\xi(t_{n-1})}{t_{n-1}^H}\right)\ge \varphi(\varepsilon)=\exp\big(-\psi(\varepsilon)\big)
 \ge 2 \exp\big(-\psi(\theta)\big)=2\varphi(\theta).
 $$ 
 The proof of Lemma \ref{lem finite}. is complete.
 \end{proof}

 \subsection{Necessity of the integral condition}
     
Suppose that with positive probability, $\xi(t)\le M(t)$ for all $t>0$ small enough. 
We will prove that $\xi(t)/t^H$ is bounded and $I_0(\xi)<+\infty$. 
The first fact is a direct consequence of \eqref{eq M1} and   
$\lim_{\theta\rightarrow \infty}\varphi(\theta)=1$.  
  Let us prove $I_0(\xi)<+\infty$ by using Lemma \ref{lem GBC}.
    
  \subsubsection{Construction of $\{t_n\}_{n\ge1}$}\label{subsect tn}
 \begin{lemma}\label{lem limit00} If   with positive probability, $\xi(t)\le M(t)$ for all $t>0$ small enough, then 
 \begin{align}\label{eq limit00}
 \lim_{t\rightarrow0}\frac{\xi(t)}{t^H}=0. 
 \end{align}
 \end{lemma}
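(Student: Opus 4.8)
The plan is to argue by contradiction, exploiting the two-point small-ball estimate \eqref{eq Mtu} together with the self-similarity relation \eqref{eq M1}. Since the boundedness of $\xi(t)/t^H$ has already been recorded, the quantity $a:=\limsup_{t\to 0+}\xi(t)/t^H\in[0,\infty)$ is well defined, and it suffices to rule out $a>0$. Suppose $a>0$ and choose $t_k\downarrow 0$ with $\xi(t_k)/t_k^H\ge a/2$. Since $t_k\to 0$, I may pass to a \emph{sparse} subsequence $s_j:=t_{k_j}$ with $s_{j+1}/s_j\to 0$, along which still $\xi(s_j)/s_j^H\ge a/2$. On the event $E=\{M(t)\ge\xi(t)\text{ for all }t<t_0(\omega)\}$, which by hypothesis has $\mathbb P(E)>0$, we then have $M(s_j)/s_j^H\ge a/2$ for all large $j$. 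The goal is to contradict this by proving that, almost surely, $M(s_j)/s_j^H<a/4$ for infinitely many $j$.

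The decorrelation input comes from \eqref{eq Mtu}. For $i>j$ (so $s_i<s_j$), I would apply it with $t=s_i$, $u=s_j$, $\theta=a/4$ and $\eta=(a/4)s_j^H$. Using the identity $(\gamma+H)/\beta=1$ (which holds since $\gamma+H=\alpha+\tfrac12=\beta$), the exponent simplifies to give
\[
\mathbb P\!\left(M(s_i)\le \tfrac a4 s_i^H,\ M(s_j)\le \tfrac a4 s_j^H\right)\le 2\,\varphi\!\left(\tfrac a4\right)\exp\!\left(-\frac{1-s_i/s_j}{c_{3,1}(a/4)^{1/\beta}}\right).
\]
Because the scales are sparse, $s_i/s_j\to 0$ and the right-hand side is, up to a fixed factor, comparable to $\varphi(a/4)$; this is the asymptotic independence of the events $F_j:=\{M(s_j)\le (a/4)s_j^H\}$, each of which has the fixed positive probability $\varphi(a/4)$ by \eqref{eq M1}.

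The delicate point is the upgrade to a probability-one statement. Note that $\sum_j\mathbb P(F_j)=\infty$ only yields, via a Kochen--Stone/generalized Borel--Cantelli bound (Lemma \ref{lem GBC}), that $F_j$ occurs infinitely often with \emph{positive} probability, which is not enough. To reach probability one along the \emph{given} sequence $s_j$, I would replace $F_j$ by genuinely independent events built from the decomposition $X=X_1+X_2$ of \eqref{eq X12}: at scale $s_j$ choose the cutoff $v_j:=\sqrt{s_j s_{j-1}}$ as in Proposition \ref{prop mtu1}, so that the local parts are driven by $B$ on the disjoint annuli $\{v_{j+1}<|x|\le v_j\}$ and are independent across $j$, while the tail parts are uniformly small on $[0,s_j]$ by Lemma \ref{lem diam}. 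A second Borel--Cantelli argument for the independent events, with the summable tail errors discarded separately, then gives $\mathbb P(F_j\text{ i.o.})=1$, which contradicts $\mathbb P(E)>0$; hence $a=0$ and $\lim_{t\to 0+}\xi(t)/t^H=0$.

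I expect the main obstacle to be precisely this last upgrade. The purely deterministic fact that $\liminf_{t\to0}\xi(t)/t^H=0$ does \emph{not} force the full limit to vanish, so one cannot avoid proving an infinitely-often statement along a prescribed sparse sequence, and this requires genuine (not merely asymptotic) independence. Producing that independence forces one to confront the singular behaviour of the increment variances of $X$ near $t=0$ recorded in \eqref{Eq: Ymoment1}--\eqref{Eq: Zmoment1}, exactly the technical difficulty flagged in Remark (ii); the band decomposition at the geometric-mean scales $v_j=\sqrt{s_j s_{j-1}}$ and the diameter bounds of Lemma \ref{lem diam} are the tools I would use to control it.
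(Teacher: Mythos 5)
Your overall frame -- contradiction along a sparse sequence $s_j\downarrow 0$ with $\xi(s_j)/s_j^H\ge a/2$, small-ball events $F_j$ of fixed probability $\varphi(a/4)$, and a Borel--Cantelli argument to get $F_j$ infinitely often almost surely -- is indeed the paper's frame. But the two specific claims you make on the way are both wrong, and they matter. First, the bound you derive from \eqref{eq Mtu} is not asymptotic independence. Your algebra is correct (using $\gamma+H=\beta$ the exponent is $(1-s_i/s_j)/\bigl(c_{3,1}(a/4)^{1/\beta}\bigr)$), but this exponent \emph{saturates} at the constant $(a/4)^{-1/\beta}/c_{3,1}$ as the separation grows, so the bound is $2\varphi(a/4)e^{-c}$ with $c$ fixed. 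That contains only one factor of $\varphi(a/4)$; asymptotic independence would require a bound of order $\varphi(a/4)^2=\mathbb P(F_i)\mathbb P(F_j)$. Since by \eqref{eq varphi1} one only knows $\varphi(a/4)\ge\exp\bigl(-\kappa_3(a/4)^{-1/\beta}\bigr)$ and nothing guarantees $1/c_{3,1}\ge\kappa_3$, the factor $2e^{-c}$ can exceed $\varphi(a/4)$ by an amount exponentially large in $(a/4)^{-1/\beta}$, so Lemma \ref{lem GBC} cannot be run with a small $\varepsilon$ from this input. The estimate built for well-separated scales is Proposition \ref{prop mtu1}, whose bound has the product structure $\varphi(\theta)\varphi(\eta)\exp\bigl[K(u/t)^{-\tau}(\cdots)\bigr]$ plus an additive remainder $\exp\bigl[-(u/t)^{\tau}/K\bigr]$; that is what the paper uses in this lemma.

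Second, your assertion that Lemma \ref{lem GBC} ``only yields positive probability, which is not enough'' is precisely where you miss the paper's proof, and it is what makes your detour unnecessary. With $A_n=\{M(t_n)\le\xi(t_n)\}$, $a_n=\mathbb P(A_n)\ge\varphi(\delta)>0$, Proposition \ref{prop mtu1} together with \eqref{eq sbp} and $\xi(t_n)/t_n^H\ge\delta$ gives $\mathbb P(A_n\cap A_m)\le a_na_m\bigl(\exp[2K_1\delta^{-1/\beta}-l^{\tau}/K]+\exp[2Kl^{-\tau}\delta^{-1-1/\beta}]\bigr)=a_na_m(1+o(l))$ whenever $t_m/t_n\ge l$. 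Applying Lemma \ref{lem GBC} to the tail subsequences $\mathbb N_{l,N}$ (separation at least $l$, indices at least $N$), along which $\sum_m a_m=\infty$, yields $\mathbb P\bigl(\bigcup_{m\in\mathbb N_{l,N}}A_m\bigr)\ge(1+2o(l))^{-1}$ for every $N$; letting $N\to\infty$ and then $l\to\infty$ gives $\mathbb P(A_n\ \text{i.o.})=1$. So probability one is reached with no genuine independence at all. Your substitute -- events measurable with respect to the noise on disjoint annuli $\{v_{j+1}<|x|\le v_j\}$, second Borel--Cantelli for those, first Borel--Cantelli for the discarded pieces -- is a legitimate alternative, since you are free to choose the sparsity of $s_j$ recursively. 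But as written it also has a gap: Lemma \ref{lem diam} gives only pointwise $L^2$ bounds, whereas you need the \emph{suprema} over $[0,s_j]$ of both discarded pieces -- the outer noise $\{|x|>v_j\}$ \emph{and} the inner noise $\{|x|\le v_{j+1}\}$, which your ``tail parts'' phrase glosses over -- to be small with summable exceptional probabilities; for that you must invoke the metric-entropy estimates and Lemma \ref{lem Dudley} exactly as in the proof of Proposition \ref{prop mtu1}. In short: your final route could be completed, but it is not completed here, the decorrelation step you do spell out is incorrect, and the obstacle you invoke to justify the more elaborate construction is not an obstacle in the paper's argument.
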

 \begin{proof}
 Otherwise, we can find a sequence   $\{t_n\}_{n\ge1}$ such that $\delta:=\inf_{n\ge1}\frac{\xi(t_n)}{t_n^H}>0$.  
 Denote by $A_n:=\big\{M(t_{n})\le \xi(t_n)  \big\}$. By \eqref{eq M1}, we have 
 \begin{equation}\label{eq Pan}
 \inf_{n\ge1} \mathbb P(A_n)=\inf_{n\ge1} \mathbb P\big(M({t_n})\le \xi(t_n)\big)\ge \mathbb P\big(M(1)\le \delta\big)>0.
 \end{equation}
In the following,  we show that  the events $A_n$ occur infinitely often almost surely. This contradicts the assumption 
of Lemma \ref{lem limit00}.  Without loss of generality, we assume that $t_{n}/ t_{n+1}\ge 2$.  Denote by
$\mathbb P(A_n)=a_n$ for all $n\ge1$. Applying Proposition \ref{prop mtu1}   with $t=t_n, u=t_m,
 \theta=\xi(t_n)/t_n^H, \eta=\xi(t_m)/t_m^H$ for $m<n$, we have
 \begin{align*}
 \mathbb P\left(A_n\cap A_m\right)
 &\le \, a_na_m \Bigg\{\exp\left(-\frac{1}{K}\Big(\frac{t_m}{t_n}\Big)^{\tau}\right)+\varphi\left(\frac{\xi(t_m)}{t_m^H}\right)
 + \varphi\left(\frac{\xi(t_n)}{t_n^H}\right)\\
 &\quad \, \,\,\, \,\,\, \,\,\,+\exp\left[K\Big(\frac{t_m}{t_n}\Big)^{-\tau}  \Bigg(\Big(\frac{\xi(t_m)}{t_m^H}\Big)^{-1-1/\beta}
 + \Big(\frac{\xi(t_n)}{t_n^H}\Big)^{-1-1/\beta}\Bigg)  \right] \Bigg\}\\
 &\le a_na_m \Bigg\{\exp\left(-\frac{1}{K}\Big(\frac{t_m}{t_n}\Big)^{\tau} +2K_1\delta^{-1/\beta}   \right)
 +\exp\left(2K\Big(\frac{t_m}{t_n}\Big)^{-\tau} \delta^{-1-1/\beta}   \right) \Bigg\},
 \end{align*}
 where \eqref{eq sbp} is used in the last step.  For any $l\ge1, N\ge1$, let $\mathbb N_{l, N}:=\{n_k\}_{k\ge N}
 \subset \mathbb N$ such that for $t_{n_k}/t_{n_{k+1}}\ge l$ for any $n_{k},n_{k+1}\in \mathbb N_{l, N}$. 
 Thus,  we have
 \begin{align*}
 \mathbb P(A_n\cap A_m)\le&\, a_na_m \Bigg\{\exp\left(-\frac{1}{K} l^{\tau}+2K_1\delta^{-1/\beta} \right) 
 +  \exp\left(2Kl^{-\tau} \delta^{-1-1/\beta}   \right) \Bigg\}\\
 =:&\, a_na_m \left(1+o(l)\right),
 \end{align*}
 where $o(l)\rightarrow0$ as $l\rightarrow +\infty$. By \eqref{eq Pan}, we have $\sum_{m\in \mathbb N_{l, N}}
 a_m=+\infty$. Consequently,  by Lemma \ref{lem GBC}, we have 
 $$
 \mathbb P\bigg(\bigcup_{m\in \mathbb N_{l, N}} A_m\bigg)\ge \frac{1}{1+o(l)}.
 $$
 This implies that  $\lim_{N\rightarrow \infty}\mathbb P\left(\bigcup_{n\ge N} A_n\right)=1$. Hence,  
 the events $A_n$ occur infinitely often almost surely. The proof is complete.
 \end{proof}

\begin{lemma}\label{lem tn1}
Assume $I_0(\xi)=+\infty$ and  \eqref{eq limit00}. Then there exists  a sequence   $\{t_n\}_{n\ge1}$ 
with the following   properties:
 \begin{itemize}
   \item[(i)]  
  \begin{align}\label{eq sum infty0}
  \sum_{n=1}^{\infty} \varphi\left(\frac{\xi(t_n)}{t_n^H}\right)=+\infty;
  \end{align} 
 \item[(ii)]  
 \begin{align}\label{eq tn+10}
 t_{n+1}= t_n-t_{n+1}^{\gamma/\beta} \xi(t_{n+1})^{1/\beta}.
\end{align}
 
 \end{itemize}
 \end{lemma}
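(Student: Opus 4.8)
The plan is to let $\{t_n\}_{n\ge1}$ be exactly the sequence generated by the recursion in (ii): start from $t_1=\e^{-\e}$ and, having constructed $t_n$, define $t_{n+1}$ to be the unique $u\in(0,t_n)$ solving $u+u^{\gamma/\beta}\xi(u)^{1/\beta}=t_n$. Existence and uniqueness hold because $u\mapsto u+u^{\gamma/\beta}\xi(u)^{1/\beta}$ is continuous, strictly increasing (as $\xi$ is nondecreasing and $\gamma/\beta>0$), tends to $0$ as $u\to0+$ (since $\xi(u)\to0$ by the boundedness of $\xi(t)/t^H$), and equals $t_n+t_n^{\gamma/\beta}\xi(t_n)^{1/\beta}>t_n$ at $u=t_n$. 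Property (ii) then holds by construction. By the same argument as in Lemma \ref{lem 00}, $\{t_n\}$ is strictly decreasing with $t_n\downarrow0$: a positive limit would force the gaps $t_n-t_{n+1}=t_{n+1}^{\gamma/\beta}\xi(t_{n+1})^{1/\beta}$ to tend to a strictly positive number, contradicting $t_n-t_{n+1}\to0$. Hence the intervals $[t_{n+1},t_n]$, $n\ge1$, tile $(0,\e^{-\e}]$, so that $I_0(\xi)=\sum_{n\ge1}\int_{t_{n+1}}^{t_n}\big(\xi(t)/t^H\big)^{-1/\beta}\varphi\big(\xi(t)/t^H\big)\,\frac{dt}{t}$, and proving (i) reduces to a per-interval estimate.

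Writing $\theta_n:=\xi(t_n)/t_n^H$, the first ingredient is a scale computation. Since $H=\alpha-\gamma+\tfrac12$ and $\beta=\alpha+\tfrac12$ yield the identity $\beta-\gamma=H$, the recursion gives $\tfrac{t_n}{t_{n+1}}=1+t_{n+1}^{\gamma/\beta-1}\xi(t_{n+1})^{1/\beta}=1+\theta_{n+1}^{1/\beta}$, whence
\[
\int_{t_{n+1}}^{t_n}\frac{dt}{t}=\log\big(1+\theta_{n+1}^{1/\beta}\big)\le\theta_{n+1}^{1/\beta},
\]
and $t_n/t_{n+1}\to1$ by Lemma \ref{lem limit00}. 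The second ingredient controls the integrand on $[t_{n+1},t_n]$. Using the necessity hypothesis of Theorem \ref{thm main 0} that $\xi(t)/t^{(1+\varepsilon_0)H}$ is non-increasing, I write $\xi(t)/t^H=\big(\xi(t)/t^{(1+\varepsilon_0)H}\big)\,t^{\varepsilon_0 H}$ and bound, for $t\in[t_{n+1},t_n]$,
\[
\frac{\xi(t)}{t^H}\le\theta_{n+1}\Big(\frac{t_n}{t_{n+1}}\Big)^{\varepsilon_0 H}=\theta_{n+1}\,c_n,\qquad c_n:=\big(1+\theta_{n+1}^{1/\beta}\big)^{\varepsilon_0 H}.
\]
For $n$ large, $c_n\theta_{n+1}<\theta_0$, so by Lemma \ref{lem increase} the integrand is at most $(c_n\theta_{n+1})^{-1/\beta}\varphi(c_n\theta_{n+1})$; combined with the displayed length bound and $c_n^{-1/\beta}\le1$ this gives $\int_{t_{n+1}}^{t_n}(\cdots)\,\frac{dt}{t}\le\varphi(c_n\theta_{n+1})$.

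Finally I would apply Lemma \ref{lem dom} (i.e.\ \eqref{eq control}) with $\theta=\theta_{n+1}$ and $\varepsilon=c_n\theta_{n+1}$, which is admissible since $\theta_{n+1}\le c_n\theta_{n+1}\le2\theta_{n+1}<1$ for large $n$. Because $(c_n-1)\theta_{n+1}^{-1/\beta}=\big[(1+\theta_{n+1}^{1/\beta})^{\varepsilon_0 H}-1\big]\theta_{n+1}^{-1/\beta}\to\varepsilon_0 H$, the ratio $\varphi(c_n\theta_{n+1})/\varphi(\theta_{n+1})\le\exp\big(K_3(c_n-1)\theta_{n+1}^{-1/\beta}\big)$ stays bounded, so each summand is $\le C\varphi(\theta_{n+1})$ for all large $n$. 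Summing and using $I_0(\xi)=+\infty$ gives $\sum_{n}\varphi(\theta_{n+1})\ge C^{-1}\int_0^{t_N}(\cdots)\,\frac{dt}{t}=+\infty$, which is \eqref{eq sum infty0}. The main obstacle is precisely the integrand control of the previous paragraph: since $\varphi(\theta)\asymp\exp(-\theta^{-1/\beta})$ is extremely sensitive, one cannot replace $\xi(t)/t^H$ inside $\varphi$ by $\theta_{n+1}$ up to an arbitrary multiplicative factor. The delicate point is that the admissible distortion $c_n-1$ is of order $\theta_{n+1}^{1/\beta}$ — exactly the order that keeps $K_3(c_n-1)\theta_{n+1}^{-1/\beta}$ bounded in Lemma \ref{lem dom} — and securing this requires both the scale identity $\beta-\gamma=H$ and the regularity supplied by the non-increasing hypothesis on $\xi(t)/t^{(1+\varepsilon_0)H}$.
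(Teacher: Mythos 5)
Your construction of $\{t_n\}$ and your overall strategy (tile $(0,\e^{-\e}]$ by the intervals $[t_{n+1},t_n]$, bound each piece of $I_0(\xi)$ by a bounded multiple of one term of the series via Lemma \ref{lem dom}, then sum) coincide with the paper's. However, there is a genuine gap: your key integrand bound $\xi(t)/t^H\le c_n\theta_{n+1}$ on $[t_{n+1},t_n]$ is derived from the hypothesis that $\xi(t)/t^{(1+\varepsilon_0)H}$ is non-increasing, and that hypothesis is \emph{not} among the assumptions of the lemma, which are only $I_0(\xi)=+\infty$ and \eqref{eq limit00}. (The monotonicity of $\xi(t)/t^{(1+\varepsilon_0)H}$ enters only later, as a hypothesis of part (b) of Theorem \ref{thm main 0} and of Proposition \ref{Prop con2}.) So you prove a strictly weaker statement; it would happen to suffice where the lemma is applied, but it is not a proof of the lemma as stated. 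Moreover, your closing claim that this regularity is \emph{required} is incorrect: the obstacle you identify only arises because you anchor the comparison at the left endpoint $t_{n+1}$, which forces you to control the ratio $\xi(t_n)/\xi(t_{n+1})$, something plain monotonicity of $\xi$ cannot do.

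The paper's proof anchors at the right endpoint $t_n$ instead, using only the monotonicity of $\xi$ and $\varphi$ and the identity $H=\beta-\gamma$. For $t\in[t_{n+1},t_n]$,
\begin{equation*}
\left(\frac{\xi(t)}{t^H}\right)^{-1/\beta}\varphi\left(\frac{\xi(t)}{t^H}\right)\frac1t
=\frac{1}{t^{\gamma/\beta}\xi(t)^{1/\beta}}\,\varphi\left(\frac{\xi(t)}{t^H}\right)
\le \frac{1}{t_{n+1}^{\gamma/\beta}\xi(t_{n+1})^{1/\beta}}\,\varphi\left(\frac{\xi(t_n)}{t_{n+1}^H}\right),
\end{equation*}
so by \eqref{eq tn+10} the integral over $[t_{n+1},t_n]$ is at most $\varphi\big(\xi(t_n)/t_{n+1}^H\big)$; no appeal to Lemma \ref{lem increase} is even needed. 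Then $\xi(t_n)/t_{n+1}^H=\theta_n\big(1+\theta_{n+1}^{1/\beta}\big)^H$, and since $\theta_{n+1}\le \xi(t_n)/t_{n+1}^H\le 2^H\theta_n$ for all large $n$ (because $t_{n+1}\ge t_n/2$), one gets $\xi(t_n)/t_{n+1}^H\le \theta_n\big(1+c\,\theta_n^{1/\beta}\big)$ with $c$ depending only on $H$ and $\beta$; Lemma \ref{lem dom} applied with $\theta=\theta_n$ then yields $I_n\le C\varphi(\theta_n)$ under exactly the stated hypotheses. Everything else in your argument (existence and uniqueness in the recursion, $t_n\downarrow 0$, the final summation) is correct, and replacing your $\theta_{n+1}$-comparison by this $\theta_n$-comparison closes the gap.
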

 
 \begin{proof} The construction is given by induction over $n$. We take $t_1=\e^{-\e}$. Having 
 constructed $t_n$,  we define  
 \begin{align*} 
 t_{n+1}:=\inf\left\{u\le t_n;\, u+u^{\gamma/\beta}\xi(u)^{1/\beta}\ge t_n \right\}.
 \end{align*}
It is obvious by the continuity of $\xi$ that 
\eqref{eq tn+10}   holds.   
 
  To prove \eqref{eq sum infty0},  it is sufficient to prove that for all $n$ large enough,  
 \begin{align}\label{eq In1}
 I_n:=\int_{t_{n+1}}^{t_{n}} \left(\frac{\xi(t)}{t^H}\right)^{-1/\beta}\varphi\left(\frac{\xi(t)}{t^H}\right) 
 \frac{dt}{t}\le c_{4,5}\varphi\left(\frac{\xi(t_{n})}{t_{n}^H}\right),
 \end{align}
 where  $c_{4,5}$  depends on $H$ and $\beta$ only.
By the monotonicities of $\{t_n\}_{n\ge1}$ and $\xi$, we derive
 \begin{align}\label{eq In2}
 I_n \le &\, \frac{\left(t_n-t_{n+1}  \right)}{  t_{n+1}^{\gamma/\beta}\xi(t_{n+1})^{1/\beta}}   
 \varphi\bigg(\frac{\xi(t_{n})}{t_{n+1}^H}\bigg)
 =\, \varphi\bigg(\frac{\xi(t_{n})}{t_{n+1}^H}\bigg).
 \end{align} 
It follows from \eqref{eq limit00} and \eqref{eq tn+10} that $t_{n+1}\ge  t_n/2$
  for all  $n$ large enough. Hence
 \begin{equation}\label{eq In3}
 \begin{split}
 \frac{\xi(t_{n})}{t_{n+1}^H}=&\,
 \frac{\xi(t_{n})}{t_{n}^H} \bigg(\frac{t_{n}}{t_{n+1}}\bigg)^H= \,\frac{\xi(t_{n})}{t_{n}^H}
 \Bigg(1+\bigg(\frac{\xi(t_{n+1})}{t_{n+1}^H}\bigg)^{1/\beta}\Bigg)^H\\
 \le &\,\frac{\xi(t_{n})}{t_{n}^H}\Bigg(1+ \bigg(\frac{2\xi(t_{n})}{ t_{n}^H}\bigg)^{1/\beta}\Bigg)^H
 \le  \,\frac{\xi(t_{n})}{t_{n}^H}\left(1+c_{4,6} \left(\frac{\xi(t_{n})}{ t_{n}^H}\right)^{1/\beta}\right),
 \end{split}
 \end{equation}
 for some constant $c_{4,6}>0$ which only depends on $H$.
 The inequality \eqref{eq In1}  follows now from \eqref{eq control}, \eqref{eq In2} and \eqref{eq In3}.  
 The proof is complete.  \end{proof}

 For each $n\ge 1$, we define $k(n)$ by 
 \begin{equation}\label{eq kn}
 2^{k(n)}\le \left(\frac{t_n^H}{\xi(t_n)}\right)^{1/\beta} < 2^{k(n)+1},
 \end{equation}
 and set $I_k:=\big\{n\in\mathbb N; \, k(n)=k \big\}$ for any $k\ge1$.  
  
Recall the constant $K_1$ given in \eqref{eq sbp}. Without loss of generality, we  assume 
that $K_1\ge c_{2,2}$ where $c_{2,2}$ is the constant in \eqref{eq Mtu}. For $k\ge1$, we set 
 $$N_k:=\exp\left(2^{k-2}/K_1 \right).$$

 The following is a refinement of Lemma \ref{lem tn1}.
 \begin{proposition}\label{Prop con2}  
Assume $I_0(\xi)=+\infty$, \eqref{eq limit00}, and  $\xi(t)/t^{(1+\varepsilon_0)H}$ is nonincreasing  
for some $\varepsilon_0>0$.  Then there exist a positive constant $c_{4,7}$ depending on $\beta$
and $H$ only and a set $J$ with the following properties:
 \begin{itemize}
 \item[(i)]
 \begin{align}\label{eq sum J infty0}
 \sum_{n\in J}\varphi\left( \frac{\xi(t_n)}{t_n^H}  \right)=+\infty;
 \end{align}
 
  \item[(ii)] Given $n, m\in J$ with $m<n$ such that 
  \begin{align}\label{eq kmn0}
  \card\big(I_{k(m)}\cap [m,n]\big)> N_{k(m)}, 
  \end{align}
 we have 
\begin{align}\label{eq tmn exp0}
\frac{t_m}{t_n}\ge\exp\left(\exp\big( 2^{\max\{k(m),\, k(n)\}}/ c_{4,7}  \big) \right).
\end{align} 
 \end{itemize}
 \end{proposition}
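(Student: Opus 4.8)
The plan is to keep the sequence $\{t_n\}$ furnished by Lemma \ref{lem tn1} and to obtain $J$ as a suitable sub-collection of its indices. The engine of the whole argument is the recursion \eqref{eq tn+10}, rewritten as $t_{j-1}/t_j = 1+(\xi(t_j)/t_j^H)^{1/\beta}$. Writing $\theta_n:=\xi(t_n)/t_n^H$, the definition \eqref{eq kn} gives $2^{-k(j)-1}<\theta_j^{1/\beta}\le 2^{-k(j)}$, so $t_{j-1}/t_j\ge 1+2^{-k(j)-1}$. Using $\ln(1+x)\ge x/2$ on $(0,1]$ and telescoping over the indices of a \emph{single} level, I obtain the fundamental estimate: for every level $\ell$ and all $m<n$,
\[
\ln\frac{t_m}{t_n}\ \ge\ \frac14\,\card\big(I_\ell\cap(m,n]\big)\,2^{-\ell}.
\]
Hence, as soon as $(m,n]$ contains more than $N_\ell=\e^{2^{\ell-2}/K_1}$ indices of level $\ell$, the right-hand side exceeds $\tfrac14 N_\ell\,2^{-\ell}\ge \e^{2^{\ell}/c_{4,7}}$ for a suitable $c_{4,7}$ depending only on $\beta,H$ (the exponential in $N_\ell$ swallows the factor $2^{-\ell}$), i.e. $t_m/t_n\ge\exp(\exp(2^\ell/c_{4,7}))$. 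This is exactly \eqref{eq tmn exp0} with $\ell$ in place of $\max\{k(m),k(n)\}$.

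For the set $J$ I would take the \emph{deep} indices, $J:=\{n:\ n\ \text{is not among the first}\ N_{k(n)}\ \text{indices of}\ I_{k(n)}\}$. The complement contains at most $N_k$ indices from each level $k$, so by \eqref{eq sbp} (which yields $\varphi(\theta_n)\le \e^{-2^{k(n)}/K_1}$),
\[
\sum_{n\notin J}\varphi(\theta_n)\ \le\ \sum_{k}N_k\,\e^{-2^k/K_1}\ =\ \sum_k\e^{-3\cdot 2^{k-2}/K_1}\ <\ \infty .
\]
Since $\sum_n\varphi(\theta_n)=\infty$ by \eqref{eq sum infty0}, property (i), i.e. \eqref{eq sum J infty0}, follows at once. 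The purpose of deepness is that every $n\in J$ has more than $N_{k(n)}$ indices of its own level sitting to its left, which is what will feed the level-$k(n)$ instance of the fundamental estimate.

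It remains to produce the $\max$ in \eqref{eq tmn exp0}. Fix $m<n$ in $J$ with $\card(I_{k(m)}\cap[m,n])>N_{k(m)}$. If $k(n)\le k(m)$, apply the fundamental estimate at $\ell=k(m)=\max\{k(m),k(n)\}$: the hypothesis supplies the required level-$k(m)$ indices directly (removing the single index $m$ costs nothing). If $k(n)>k(m)$, apply it instead at $\ell=k(n)=\max\{k(m),k(n)\}$; here I use that $n$ is deep, so $I_{k(n)}\cap[1,n]$ has more than $N_{k(n)}$ indices, and I must show that all but a controlled number of them lie in $(m,n]$ rather than in $[1,m]$. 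This is where the monotonicity enters: with $\phi_n:=\xi(t_n)/t_n^{(1+\varepsilon_0)H}$ nondecreasing in $n$, any index $j\le m$ of level $k(n)$ must satisfy $t_j\ge t_m\,2^{(k(n)-k(m)-1)\beta/H}$ (combine $\theta_j\le 2^{-k(n)\beta}$, $\theta_m\ge 2^{-(k(m)+1)\beta}$ with $\theta_j\ge\theta_m(t_m/t_j)^H$ from $\xi$ nondecreasing), so such ``premature'' indices are confined far to the left; I would use the nondecrease of $\phi_n$ to bound their number by $\tfrac12 N_{k(n)}$, leaving $\card(I_{k(n)}\cap(m,n])\ge\tfrac12 N_{k(n)}$ and hence $\exp(\exp(2^{k(n)}/c_{4,7}))$ after enlarging $c_{4,7}$.

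The main obstacle is precisely this last count. Telescoping over one level gives the $k(m)$-exponent effortlessly, but combining it with the monotonicity-controlled relation $k(n)-k(m)\le \frac{\varepsilon_0 H}{\beta\ln 2}\ln(t_m/t_n)$ is hopelessly weak for upgrading $k(m)$ to $k(n)$: it loses an entire exponential, since $\ln(t_m/t_n)$ is already of size $\e^{2^{k(m)}/c_{4,7}}$. The only viable route is the one above---extracting the $k(n)$-mass directly from the deepness of $n$---and its success hinges entirely on showing that the level sets $I_k$ do not fragment, i.e. that $\theta_n=\xi(t_n)/t_n^H$ cannot rise appreciably as $n$ increases, so that high levels never appear prematurely. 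Turning the single hypothesis ``$\phi_n$ nondecreasing'' into a quantitative bound on this fragmentation is the delicate step, and is exactly the technical difficulty the authors attribute to the singularity of the increments at $t=0$.
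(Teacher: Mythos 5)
Your telescoping estimate, your handling of the case $k(n)\le k(m)$, and your verification of property (i) for the set of globally ``deep'' indices are all correct (your proof of (i) is in fact cleaner than the paper's). But the proposal has a genuine gap exactly where you flag it, and it is not a missing computation that the stated hypotheses could supply: the choice of $J$ itself is what fails. Your $J$ consists of indices that are deep relative to the \emph{beginning} of the sequence, so for $m<n$ in $J$ with $k(n)>k(m)$, deepness of $n$ only produces more than $N_{k(n)}$ indices of level $k(n)$ in $[1,n]$, and nothing prevents essentially all of them from lying in $[1,m]$. Your monotonicity argument shows that such premature indices $j\le m$ must have $t_j$ exceeding $t_m$ by a factor exponential in $k(n)-k(m)$; that controls their \emph{location}, not their \emph{number}. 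A nondecreasing $\xi$ with $\xi(t)/t^{(1+\varepsilon_0)H}$ nonincreasing can keep $\xi(t)/t^H$ on a long early plateau at level $k(n)$ (creating far more than $N_{k(n)}$ indices of that level), then rise back to level $k(m)$, then descend again to level $k(n)$ at $n$: rises and falls of $\xi(t)/t^H$ are only rate-limited by power laws in $t$, so the final descent need only spread over a $t$-ratio of order $2^{(k(n)-k(m))\beta/(\varepsilon_0 H)}$ and creates only on the order of $2^{k(n)}$ indices of level $k(n)$ inside $(m,n]$, far below $N_{k(n)}=\exp\big(2^{k(n)-2}/K_1\big)$. In such a scenario both $m$ and $n$ are deep, \eqref{eq kmn0} can hold, and yet $t_m/t_n$ is doubly exponential only in $k(m)$, not in $k(n)$; so property (ii) fails for your $J$, and no count bound like ``at most $\tfrac12 N_{k(n)}$ premature indices'' can be extracted from the hypotheses.

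The paper's construction sidesteps this by making the exclusion \emph{relative to every earlier index} rather than to the origin: for each $m$ it deletes $V_m:=\bigcup_{k\ge k(m)+k_0}U_{m,k}$, where $U_{m,k}$ is the set of $i>m$ of level $k$ that are among the first $N_k$ indices of level $k$ \emph{counted from} $m$, and takes $J\subset\mathbb N\setminus\bigcup_{m\ge1}V_m$ (intersected with $[w,\infty)$). Property (ii) then holds by construction: if $m<n$ in $J$ and $k(n)\ge k(m)+k_0$, then $n\notin U_{m,k(n)}$ forces $\card\big(I_{k(n)}\cap[m,n]\big)>N_{k(n)}$, i.e.\ the level-$k(n)$ mass is guaranteed to sit inside $[m,n]$, after which your own telescoping estimate finishes the proof. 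The work shifts to property (i), which is where the constants enter: $\sum_{i\in U_{m,k}}a_i\le N_k\,\e^{-2^k/K_1}\le \e^{-2^{k-1}/K_1}$, and choosing $k_0$ with $2^{k_0}\ge 2K_1+4K_1^2$ makes the geometric tail sum to $\sum_{i\in V_m}a_i\le c_{4,8}\,a_m$ with $c_{4,8}<1$, so deleting $\bigcup_m V_m$ removes at most a fixed fraction of the divergent series. This relative, pairwise exclusion is the single idea your proposal is missing, and without it the case $k(n)>k(m)$ cannot be closed.
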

 \begin{proof} Set $a_n:=\varphi\big(\xi(t_n)/t_n^H\big)$. We  recall from \eqref{eq sbp} that 
 \begin{align*} 
 \exp\left(- K_1 2^{k(n)+1}\right)\le a_n \le 
 \exp\left(-   2^{k(n)}/K_1\right).
 \end{align*}
 Given $m, k\in \mathbb N$,   define
 $$
 U_{m,k}:=\big\{i>m; \, i\in I_k,\, \card(I_k\cap [m,i])\le N_k \big\}.
 $$
 Thus, 
 \begin{align*}
 \sum_{i\in U_{m, k}} a_i\le\, N_k \exp\left(-\frac{2^k}{K_1}\right)
 \le \,   \exp\left(-\frac{2^{k-1}}{K_1}\right). 
 \end{align*}
 Denote by $k_0$ the smallest integer such that $2^{k_0}\ge 2K_1+4K_1^2$. Then there exists 
 a constant $c_{4,8}\in (0,1)$ satisfying that 
 \begin{equation}\label{Eq:sums}
 \begin{split}
 \sum_{k\ge k(m)+k_0}\sum_{i\in U_{m,k}} a_i\le &\, \sum_{k\ge k(m)+k_0}\exp\left(-\frac{2^{k-1}}{K_1}\right)\\
 \le &\, a_m \sum_{k\ge k(m)+k_0}\exp\left(K_1 2^{k(m)+1}-\frac{2^{k-1}}{K_1}\right)\\
 \le &\, a_m \sum_{l\ge 0}\exp\left(  2^{k(m)}\Big(2K_1-\frac{2^{l-1+k_0}}{K_1}\Big) \right)\\
 \le &\, a_m \sum_{l\ge 0}\exp\left(  -2 ^{l} \right)\le c_{4,8} a_m.
 \end{split}
 \end{equation}
 For each $m\ge1$, set 
 $$
 V_m:=\bigcup_{k\ge k(m)+k_0}   U_{m,k}.
 $$
 It follows from (\ref{Eq:sums}) that
 \begin{align}\label{eq sum Vm0}
 \sum_{k\le p}\sum_{m\in I_k}\sum_{i\in V_m} a_i\le c_{4,8}
 \sum_{k\le p}\sum_{m\in I_k} a_m.
 \end{align}
Set 
 $$
 J_0:=\mathbb N\cap \left( \bigcup_{m\ge1} V_m\right)^c. 
 $$
 By the definition of $V_m$, we know that  $k(m)+k_0\le k(i)$ if $i\in V_m$. Thus, 
 $$
 \Big(\bigcup_{m\ge1} V_m\Big)\cap \Big(\bigcup_{k\le p}I_k\Big)\subset \bigcup_{k(m)\le p} V_m.
 $$
 This, together with \eqref{eq sum Vm0}, implies that 
 $$
 \sum_{i\notin J_0, k(i)\le p}  a_i\le c_{4,8}\sum_{k(i)\le p} a_i,
 $$
 and then
  $$
 \sum_{i\in J_0, k(i)\le p} a_i\ge (1-c_{4,8})\sum_{k(i)\le p} a_i.
 $$
 Letting $p\rightarrow\infty$, we obtain that 
 \begin{align}\label{eq sum J infty10}
 \sum_{n\in J_0}\varphi\left( \frac{\xi(t_n)}{t_n^H}  \right)=+\infty.
 \end{align} 
 
 Let $J:=J_0\cap[w,\infty) $ for a constant $w\in \mathbb N$, whose value will be chosen later.   
 Then \eqref{eq sum J infty10} implies \eqref{eq sum J infty0}. 
 
 Next, we prove (ii).
 For any $n,m\in J$ with $n<m$. If $i\in I_{k(m)}$,  then by \eqref{eq tn+10}  we have 
 \begin{align*}
 t_{i-1}\ge\, t_i\left(1+ \left(\frac{\xi(t_i)}{t_i^{H} }\right)^{1/\beta}\right)
\ge \, t_i\left(1+2^{-k(m)-1}\right).
 \end{align*}
Thus, when \eqref{eq kmn0} holds and 
when $w$ (hence  $k(m)$) is large enough,  we have 
\begin{equation}\label{eq tmn 10}
\begin{split}
\frac{t_m}{t_n}\ge \left(1+2^{-k(m)-1}\right)^{N_{k(m)}}\ge &\,\exp\left( 2^{-k(m)-2}N_{k(m)}\right)\\
\ge &\, \exp\left(\exp\left(2^{k(m)-3}/c_{4,9} \right)\right),
\end{split}
\end{equation}  
for some constant $c_{4,9}>0$.
This implies \eqref{eq tmn exp0} whenever $k(m)\ge k(n)-k_0$. If $k(m)<k(n)-k_0$, then by definition of $J$ 
we must have $n\notin U_{m, k(n)}$. This means that 
$$
\card\big(I_{k(n)}\cap [m,n]\big)>N_{k(n)},
$$
and the argument leading to \eqref{eq tmn 10} shows that \eqref{eq tmn 10} holds for $k(n)$ rather than $k(m)$.
 Thus, we complete the proof of this proposition  by choosing $w$ large enough. 
   \end{proof}

 \subsubsection{Proof of  the necessity in Theorem \ref{thm main 0} }\label{Sec Neces}
 Let $\{t_n\}_{n\in J}$ be as in  Proposition \ref{Prop con2}.
 Set $A_n= \big\{M({t_n})<\xi(t_n)\big\}$,  then $\mathbb P(A_n)=a_n= \varphi\big({\xi(t_n)}/{t_n^H}\big).$
 According to \eqref{eq sum J infty0} and  Lemma \ref{lem GBC}, it suffices to prove the following 
 statement: 
 \begin{quote}
 Given $\varepsilon>0$, there exist  positive constants $K$ and $q$ such that for any $n\in J$ with $n\ge q$, 
 \begin{align}\label{eq Anm00}
 \sum_{m\in J, \, m<n} \mathbb P(A_n\cap A_m)\le \mathbb P(A_n)\bigg(K+(1+\varepsilon) 
 \sum_{m\in J, \, m<n}\mathbb P(A_m) \bigg).
 \end{align}
 \end{quote}
 The sum on the left-hand side will be split into three parts over the following subsets of $J$:  for any   
 $n$, $k\in \mathbb N$, set 
 \begin{equation*}
 \begin{split}
 J'&:=\big\{m\in J; \ t_n<t_m\le 2 t_n \big\};\\
 J_k&:=\big\{m\in J\cap I_k; \, t_m>2t_n, \, \card(I_k\cap [m,n])\le N_k  \big\};\\
 J''&:=J\setminus \bigg( J' \cup \bigcup_{k\ge1} J_k \bigg).
 \end{split}
\end{equation*}
 
Applying  Eq. \eqref{eq Mtu}  with $t=t_n$, $u=t_m$,   $\theta=\xi(t_n)/t_n^{H}$ and $\eta =\xi(t_m)$ for 
any $m<n$, we have
\begin{align}\label{eq Anm10}
\mathbb P(A_n\cap A_m)\le  2a_n \exp\bigg(-\frac{t_m-t_n}{  c_{2,2} t_m^{\gamma/\beta}\xi(t_m)^{1/\beta}}\bigg).
\end{align}

(1). When $t_n< t_m\le 2 t_n$,   the monotonicity of $\xi(t)/t^{(1+\varepsilon_0)H}$  implies that  
$\xi(t_m)\le 2^{(1+\varepsilon_0)H}\xi(t_n)$. Hence, by \eqref{eq Anm10},  we have 
 \begin{align}\label{eq Anm200}
\mathbb P(A_n\cap A_m)\le 2 a_n \exp\bigg(-\frac{t_m-t_n}{ c_{2,2} 2^{\gamma/\beta} 2^{ (1+\varepsilon_0)H /\beta}   
t_n^{\gamma/\beta}\xi(t_n)^{1/\beta}}\bigg).
 \end{align}
By \eqref{eq tn+10} and the monotonicity of $\xi$,  we have  that for all  $m\le i < n$, 
$$
t_{i}= t_{i+1}+t_{i+1}^{\gamma/\beta}\xi(t_{i+1})^{1/\beta}\ge t_{i+1}+t_n^{\gamma/\beta}\xi(t_n)^{1/\beta}.
$$
 This, together with \eqref{eq Anm200}, implies that 
 $$
\mathbb P(A_n\cap A_m)\le  2a_n \exp\left(-\frac{m-n}{ c_{2,2} 2^{\gamma/\beta}2^{(1+\varepsilon_0)H/\beta}   }\right).
 $$
Therefore, we have 
 \begin{align}\label{eq J'0}
 \sum_{m\in J'}\mathbb P(A_n\cap A_m)\le c_{4,10}a_n,
 \end{align}
for some constant $c_{4,10}>0$.
  
 (2).  When $t_m > 2 t_n$,  by \eqref{eq Anm10}  we know  
  \begin{align*} 
\mathbb P(A_n\cap A_m)\le 2 a_n \exp\bigg(-\frac{t_m^{H/\beta}}{ 2 c_{2,2}  \xi(t_m)^{1/\beta}}\bigg).
 \end{align*}
  Thus,  we have that for any $m\in I_k$,
  $$
\mathbb P(A_n\cap A_m)\le  2a_n\exp\left(-\frac{2^{k-1}}{c_{2,2}}\right).
  $$
  Since $J_k\subset I_k$ and $\card(J_k)\le N_k$, using the definition of $N_k$, we have 
  \begin{align*}
  \sum_{m\in J_k} \mathbb P(A_n\cap A_m)\le 2 N_k a_n\exp\left(-\frac{2^{k-1}}{c_{2,2}} \right) 
  \le 2 a_n \exp\left(-\frac{2^{k-2}}{K_1} \right).
  \end{align*}
  Thus, 
   \begin{align}\label{eq Anm40}
\sum_{m\in \cup_{k\ge1} J_k }\mathbb P(A_n\cap A_m)\le  c_{4,11}a_n,
 \end{align}
   for some constant $c_{4,11}>0$.

 (3). By using \eqref{eq J'0} and \eqref{eq Anm40}, in  order to prove \eqref{eq Anm00}, it suffices to 
 prove the following result:   
 \begin{quote}
 Given $\varepsilon>0$, there exists  a constant $c_{4,12}>0$ such that for any
 $$n>\sup_{l\le c_{4,12}}
 \sup \{k;k\in I_l\}  \ \ \text{and } m\in J''\ \  \text{with  }m<n,$$
  we have 
  \begin{align}\label{eq Anm50}
  \mathbb P(A_n\cap A_m)\le  a_na_m(1+\varepsilon).
  \end{align}
  \end{quote}
  
Assume $m\in J''$  such that $m<n$.  
 Applying    \eqref{eq mtu1} with $t=t_n$, $u=t_m$, $\theta=\xi(t_n)/t_n^H$ and $\eta= \xi(t_m)/t_m^H$,
  and using the facts of $\theta\ge 2^{-(k(n)+1)\beta}$ and $\eta\ge 2^{-(k(m)+1)\beta}$,    we have
    \begin{equation}\label{eq Anm20}
  \begin{split}
  \mathbb P\left( A_n\cap A_m \right)
  \le &\,  a_na_m \Bigg\{ \exp\left[-\frac{1}{K} \left(\frac{t_m}{t_n}\right)^{\tau}+\psi(\theta)+\psi(\eta)  \right]\\
  &\, \, \, \, \,  \,\, +\exp\left[K\left(\frac{t_m}{t_n}\right)^{-\tau} \left( 2^{(k(n)+1)(1+1/\beta)} +2^{(k(m)+1)
  (1+1/\beta)} \right)\right]  \Bigg\}.
  \end{split}
  \end{equation}
   Since $m\in J''$, $m\notin J_{k(m)}$. By the definition of $J_{k(m)}$, we have 
$$
\card\big(I_{k(m)}\cap [m, n]\big)>N_{k(m)}.
$$ 
Thus, \eqref{eq kmn0} holds. By \eqref{eq tmn exp0}, we have 
\begin{align}\label{eq tmn30}
\frac{t_m}{t_n}\ge\exp\left(\exp\big(2^{\max\{k(m),\, k(n)\}}/c_{4,7} \big) \right).
\end{align}
By \eqref{eq sbp} and \eqref{eq kn},    $\psi(\theta)\le K_1 2^{k(n)+1}$ and $\psi(\eta)\le K_12^{k(m)+1}$. 
Those,  together with  \eqref{eq tmn30}, imply that the coefficient of $a_na_m$ in \eqref{eq Anm20} gets  
close to $1$ as $\max\left\{k(m),\, k(n)\right\}$   becomes large. Hence, we get \eqref{eq Anm50}.     
The proof of   the necessity in   Theorem \ref{thm main 0}  is complete.
 \vskip0.5cm

\section{Proof of Theorem \ref{thm main infty}}
In this section, we prove an integral criterion for the lower classes of GFBM at  infinity. The setting is the 
same as in Talagrand \cite{Tal96} and the arguments are similar to those in \cite{Tal96} or Section 4 
of this paper. In order not to make the paper too lengthy, we only give a sketch of the proof.

\subsection{Sufficiency}

Suppose that $\xi(t)$ is a nondecreasing continuous function such that $\xi(t)/t^H$ is bounded and 
$I_{\infty}(\xi)<+\infty$. We  prove that $\xi(t)\le M(t)$ for $t$ large enough in probability one.

By using the argument in the proof of \cite[Lemma 3.1]{Tal96}, one can obtain the following analogue of
Lemma 4.1. The details of the proof are omitted here.
\begin{lemma}\label{lem function0} Suppose that $\xi(t)$ is a nondecreasing continuous function such 
that $\xi(t)/t^H$ is bounded and $I_{\infty}(\xi)<+\infty$. Then
\begin{align}\label{eq f0}
\lim_{t\rightarrow\infty}\frac{\xi(t)}{t^H}=0.
\end{align}
\end{lemma}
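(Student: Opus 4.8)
The plan is to transcribe the proof of Lemma \ref{lem function10} with the limit $t\to 0$ replaced by $t\to\infty$ and the integral over $(0,\e^{-\e}]$ replaced by the tail integral over $[\e^\e,\infty)$; the monotonicity and small-ball ingredients used there are symmetric in the two regimes. Write $G(\theta):=\theta^{-1/\beta}\varphi(\theta)$ and set $\Theta:=\sup_{t\ge \e^\e}\xi(t)/t^H$, which is finite by hypothesis. Recall from Lemma \ref{lem increase} that $G$ is increasing on $(0,\theta_0)$, and that by \eqref{eq varphi1} we have $G(\theta)\to 0$ as $\theta\to 0+$ while $G(\theta)>0$ for every $\theta>0$. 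In particular $c:=\inf\{G(\theta):\theta_0\le\theta\le\Theta\}>0$, by the positivity and continuity of $\varphi$ on the compact interval $[\theta_0,\Theta]$.

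First I would establish a local lower bound on the integrand. For $t\le u\le 2t$ the monotonicity of $\xi$ together with $u^H\le(2t)^H$ gives $\xi(u)/u^H\ge \xi(t)/(2t)^H$. If $\xi(u)/u^H\le\theta_0$, the increasing property of $G$ on $(0,\theta_0)$ yields $G(\xi(u)/u^H)\ge G(\xi(t)/(2t)^H)$; otherwise $\xi(u)/u^H\in[\theta_0,\Theta]$ and $G(\xi(u)/u^H)\ge c$. In either case
\[
G\!\left(\frac{\xi(u)}{u^H}\right)\ge \min\left\{G\!\left(\frac{\xi(t)}{(2t)^H}\right),\,c\right\},\qquad t\le u\le 2t.
\]
Integrating against $du/u$ over $[t,2t]$ (which lies in $[\e^\e,\infty)$ once $t\ge\e^\e$) then produces
\[
\int_t^{2t} \left(\frac{\xi(u)}{u^H}\right)^{-1/\beta}\varphi\!\left(\frac{\xi(u)}{u^H}\right)\frac{du}{u}\ \ge\ (\log 2)\,\min\left\{G\!\left(\frac{\xi(t)}{(2t)^H}\right),\,c\right\}.
\]

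The last step is to feed in the convergence of $I_\infty(\xi)$. Since the integrand is nonnegative and $I_\infty(\xi)<\infty$, the tail $\int_t^\infty(\cdots)\,du/u\to 0$ as $t\to\infty$, hence so does the window integral $\int_t^{2t}(\cdots)\,du/u$. The displayed inequality then forces $\min\{G(\xi(t)/(2t)^H),c\}\to 0$; as $c>0$, for all large $t$ the minimum equals $G(\xi(t)/(2t)^H)$, so $G(\xi(t)/(2t)^H)\to 0$. Finally, because $G$ is increasing near $0$ with $G(0+)=0$ and is bounded below by $c$ on $[\theta_0,\Theta]$, the vanishing of $G(\xi(t)/(2t)^H)$ can only occur if $\xi(t)/(2t)^H\to 0$, which is equivalent to $\xi(t)/t^H\to 0$ and proves \eqref{eq f0}.

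I expect the only point requiring genuine care to be the final implication $G(\xi(t)/(2t)^H)\to 0\Rightarrow \xi(t)/t^H\to 0$: one must exclude the possibility that $\xi(t)/t^H$ stays bounded away from $0$ along a subsequence, which is exactly what the uniform lower bound $c$ on the complement of a neighbourhood of the origin rules out. Everything else is a direct, regime-symmetric copy of the argument for Lemma \ref{lem function10}.
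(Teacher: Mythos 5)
Your proof is correct and follows essentially the same route the paper intends: the paper omits the details for this lemma, noting it is obtained by the argument of its Lemma \ref{lem function10} (itself adapted from Talagrand), and your write-up is precisely that argument transcribed to the regime $t\to\infty$, using the monotonicity of $\theta^{-1/\beta}\varphi(\theta)$ near $0$ from Lemma \ref{lem increase}, the uniform positive lower bound on $[\theta_0,\Theta]$, and the vanishing of the tail of the convergent integral $I_\infty(\xi)$.
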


In order to prove the sufficiency, we  construct the sequences $\{t_n\}_{n\ge1}$,  $\{u_n\}_{n\ge1}$ 
and $\{v_n\}_{n\ge1}$ recursively as follows. Let $L>2H$ be a constant. We start with 
$t_1=\e^{\e}$. Having constructed $t_n$, we set
\begin{align} 
u_{n+1}:=&\, t_n+t_n^{\frac{\gamma}{\beta}}\xi(t_n)^{\frac1\beta},  \label{eq tn infty0} \\
v_{n+1}:=&\, \inf\left\{u>t_n; \,\xi(u)\ge\xi(t_n)\left( 1+L\left(\frac{\xi(t_n)}{t_n^H} \right)^{1/\beta}\right) 
\right\}, \label{eq tn infty1} \\
t_{n+1}:=&\, \min\{u_{n+1},\, v_{n+1}\}. \label{eq tn infty2}
\end{align}

Similar to the proofs of \cite[Lemmas 3.2 and 3.3]{Tal96}, we have the following lemmas. 
\begin{lemma}\label{lem infty} $\{t_n\}_{n\ge1}$ is increasing and
$\lim_{n\rightarrow\infty} t_n=+\infty.$
\end{lemma}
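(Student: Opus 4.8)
The plan is to mirror, with the inequalities reversed, the argument used for the $t=0$ case in Lemma \ref{lem 00}. First I would dispose of monotonicity, which is immediate from the construction: since $\xi>0$ and $t_n>0$, definition \eqref{eq tn infty0} gives $u_{n+1}=t_n+t_n^{\gamma/\beta}\xi(t_n)^{1/\beta}>t_n$, while $v_{n+1}>t_n$ by construction as an infimum over $\{u>t_n\}$ in \eqref{eq tn infty1}. Hence $t_{n+1}=\min\{u_{n+1},v_{n+1}\}>t_n$ by \eqref{eq tn infty2}, so $\{t_n\}_{n\ge1}$ is strictly increasing and the limit $t_\infty:=\lim_{n\to\infty}t_n\in(t_1,\infty]$ exists. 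The whole content is to rule out $t_\infty<\infty$, which I would do by contradiction: assuming $t_\infty<\infty$, continuity and monotonicity of $\xi$ give $\xi(t_n)\uparrow\xi(t_\infty)\in(0,\infty)$, and in particular $t_{n+1}-t_n\to0$.

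Next I would set up the key dichotomy according to which of the two candidates realizes the minimum in \eqref{eq tn infty2}. If $t_{n+1}=u_{n+1}$ for infinitely many $n$, then along that subsequence $t_{n+1}-t_n=t_n^{\gamma/\beta}\xi(t_n)^{1/\beta}\to t_\infty^{\gamma/\beta}\xi(t_\infty)^{1/\beta}>0$, contradicting $t_{n+1}-t_n\to0$. Therefore $t_{n+1}=v_{n+1}$ for all large $n$, and by continuity of $\xi$ the infimum in \eqref{eq tn infty1} is attained with equality, yielding the exact recursion
$$
\xi(t_{n+1})=\xi(t_n)\left(1+L\left(\frac{\xi(t_n)}{t_n^H}\right)^{1/\beta}\right)
$$
for all $n$ large enough. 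This is the analogue of the relation used at the end of the proof of Lemma \ref{lem 00}.

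Finally I would turn this recursion into a contradiction via uniform geometric growth. The point is to bound the multiplicative factor away from $1$: since $\xi$ is nondecreasing we have $\xi(t_n)\ge\xi(t_1)>0$, and since $t_n\le t_\infty$ and $H\in(0,1)$ we have $t_n^H\le t_\infty^H$, so $\xi(t_n)/t_n^H\ge\xi(t_1)/t_\infty^H=:c_0>0$ for every $n$. Hence $\xi(t_{n+1})\ge\left(1+Lc_0^{1/\beta}\right)\xi(t_n)$ for all large $n$, which forces $\xi(t_n)\to\infty$ and contradicts $\xi(t_n)\to\xi(t_\infty)<\infty$. This rules out $t_\infty<\infty$ and completes the proof. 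The only genuinely delicate point, and the step I expect to need care, is this uniform lower bound on $\xi(t_n)/t_n^H$: it is what guarantees the growth factor stays bounded away from $1$ rather than degenerating, and it relies precisely on combining the monotonicity of $\xi$ with the hypothesized finiteness of $t_\infty$; the rest is routine. Note that Lemma \ref{lem function0} (the analogue of the $t\to\infty$ normalization) is not needed here, the contradiction being purely internal to the recursive construction.
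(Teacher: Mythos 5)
Your proposal is correct and follows essentially the same route the paper intends: the paper proves this lemma only by reference to Talagrand's Lemmas 3.2--3.3, and the argument there is precisely the mirror image of the paper's own proof of Lemma \ref{lem 00} (dichotomy between $u_{n+1}$ and $v_{n+1}$, the exact recursion $\xi(t_{n+1})=\xi(t_n)\bigl(1+L(\xi(t_n)/t_n^H)^{1/\beta}\bigr)$ via continuity, and a uniform lower bound on $\xi(t_n)/t_n^H$ forcing geometric growth), which is what you reproduce. The only cosmetic point is that $v_{n+1}>t_n$ is not automatic from being an infimum over $\{u>t_n\}$ but follows from continuity of $\xi$ and the threshold exceeding $\xi(t_n)$ -- the same one-line argument you already use later.
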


\begin{lemma}\label{lem Mtn}
If $M(t_n)\ge \xi(t_n)\left(1+L\left(\frac{\xi(t_n) }{t_n^H}\right)^{1/\beta}\right)$ for all $n\ge n_0$, then 
$M(t)\ge \xi(t)$ for all $t\ge t_{n_0}$.
\end{lemma}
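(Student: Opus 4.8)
The plan is to adapt the sandwich argument used in the proof of Lemma \ref{lem Mtn0}, reversing the direction of the inequalities to account for the fact that here $\{t_n\}_{n\ge1}$ is \emph{increasing} to $+\infty$ rather than decreasing to $0$. The statement is purely deterministic: given the hypothesized lower bounds on $M$ along the sequence $\{t_n\}$, I want to fill in the gaps between consecutive $t_n$ using the monotonicity of $M$ and $\xi$ together with the way $t_{n+1}$ was constructed from $v_{n+1}$.

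First I would fix an arbitrary $t\ge t_{n_0}$. Since $\{t_n\}_{n\ge1}$ is increasing with $\lim_{n\to\infty}t_n=+\infty$ by Lemma \ref{lem infty}, there is a largest index $n$ with $t_n\le t$; this index satisfies $n\ge n_0$ (because $t_{n_0}\le t$) and $t<t_{n+1}$. For this $t$ the lower bound on $M(t)$ is immediate: since $M$ is nondecreasing and $t\ge t_n$, the hypothesis gives
$$
M(t)\ge M(t_n)\ge \xi(t_n)\left(1+L\left(\frac{\xi(t_n)}{t_n^H}\right)^{1/\beta}\right).
$$

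For the matching upper bound on $\xi(t)$, I would invoke the construction of $t_{n+1}$. By \eqref{eq tn infty2} we have $t_{n+1}=\min\{u_{n+1},v_{n+1}\}\le v_{n+1}$, hence $t<t_{n+1}\le v_{n+1}$. Recalling that $v_{n+1}$ is defined in \eqref{eq tn infty1} as the infimum of those $u>t_n$ with $\xi(u)\ge \xi(t_n)\big(1+L(\xi(t_n)/t_n^H)^{1/\beta}\big)$, the monotonicity of $\xi$ yields, for every $u<v_{n+1}$,
$$
\xi(u)\le \xi(t_n)\left(1+L\left(\frac{\xi(t_n)}{t_n^H}\right)^{1/\beta}\right).
$$
Applying this at $u=t$, which is legitimate since $t<v_{n+1}$, and chaining with the previous display gives
$$
\xi(t)\le \xi(t_n)\left(1+L\left(\frac{\xi(t_n)}{t_n^H}\right)^{1/\beta}\right)\le M(t_n)\le M(t).
$$
As $t\ge t_{n_0}$ was arbitrary, this proves $M(t)\ge \xi(t)$ for all $t\ge t_{n_0}$, as desired.

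I do not expect any serious obstacle here, as the lemma is a deterministic monotonicity statement rather than a probabilistic estimate. The only point demanding a little care is the use of the infimum definition of $v_{n+1}$: one must note that $t_{n+1}\le v_{n+1}$ (and not necessarily equality, because the $\min$ in \eqref{eq tn infty2} may be attained at $u_{n+1}$), so that the strict inequality $t<v_{n+1}$ still places $t$ in the region where $\xi$ has not yet reached the threshold level $\xi(t_n)\big(1+L(\xi(t_n)/t_n^H)^{1/\beta}\big)$.
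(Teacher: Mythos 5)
Your proof is correct and is essentially the paper's argument: the paper proves this lemma only by reference to Talagrand's Lemmas 3.2--3.3 and to its own Lemma \ref{lem Mtn0}, whose sandwich argument $\xi(t)\le \xi(t_n)\big(1+L(\xi(t_n)/t_n^H)^{1/\beta}\big)\le M(t_n)\le M(t)$ you reproduce with the inequalities correctly reversed for the increasing sequence, using $t_{n+1}\le v_{n+1}$ and the infimum definition \eqref{eq tn infty1} exactly as intended. The one point you flag (that the $\min$ may be attained at $u_{n+1}$) is handled correctly, so no gaps remain.
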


By \eqref{eq M1}, we have 
\begin{align}
\mathbb P\Bigg(M(t_n)<\xi(t_n)\bigg(1+L\Big(\frac{\xi(t_n)}{t_n^H}\Big)^{1/\beta} \bigg) \Bigg)
=\varphi\Bigg(\frac{\xi(t_n)}{t_n^H} \bigg(1+L\Big(\frac{\xi(t_n)}{t_n^H}\Big)^{1/\beta}\bigg) \Bigg).
\end{align}
One could show that this later series converges by using the same argument in    \cite[Section 3]{Tal96} or     
\cite[Section 3]{ElN2011}. Therefore, the proof of the sufficiency is finished   by the  
Borel-Cantelli lemma,  Lemma \ref{lem infty}  
and Lemma \ref{lem Mtn}.

 \subsection{Necessity }
 
In this part,  we suppose that with positive probability, $\xi(t)\le M(t)$ for all $t$ large enough. 
We prove that $\xi(t)$ is bounded and $I_{\infty}(\xi)<+\infty$. The first fact is a direct 
consequence of \eqref{eq M1} and the fact that $\lim_{\theta\rightarrow \infty}\varphi(\theta)=1$. 
To prove the second statement, we use Lemma \ref{lem GBC} and the following lemmas in a 
way similar to the proof in Section 4.2.

 \begin{lemma}\label{lem limit0} Suppose that with positive probability  $\xi(t)\le M(t)$ for all $t$ large enough. 
 Then we have 
 \begin{align}\label{eq limit0}
 \lim_{t\rightarrow\infty}\frac{\xi(t)}{t^H}=0. 
 \end{align}
 \end{lemma}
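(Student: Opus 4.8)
The plan is to prove Lemma~\ref{lem limit0} by contradiction, transcribing the proof of Lemma~\ref{lem limit00} (the $t=0$ case) with the roles of small and large $t$ interchanged. Suppose \eqref{eq limit0} fails. Since $\xi(t)/t^H$ is already known to be bounded under the present hypothesis, there exist a sequence $t_n\uparrow\infty$ and a constant $\delta>0$ with $\delta:=\inf_{n\ge1}\xi(t_n)/t_n^H>0$. I would set $A_n:=\{M(t_n)\le\xi(t_n)\}$. By the scaling identity \eqref{eq M1} together with the monotonicity of $\varphi$,
\[
\inf_{n\ge1}\mathbb P(A_n)=\inf_{n\ge1}\varphi\Big(\tfrac{\xi(t_n)}{t_n^H}\Big)\ge\varphi(\delta)=\mathbb P\big(M(1)\le\delta\big)>0.
\]
The aim is to show that the events $A_n$ occur infinitely often almost surely, which is incompatible with the assumption that, with positive probability, $M(t)\ge\xi(t)$ for all large $t$.

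To bound the pairwise intersections, I would note that for $m<n$ one has $t_m<t_n$, so Proposition~\ref{prop mtu1} applies with $t=t_m$, $u=t_n$, $\theta=\xi(t_m)/t_m^H$ and $\eta=\xi(t_n)/t_n^H$, both of which are at least $\delta$. Writing $a_n:=\mathbb P(A_n)$ and using the small-ball estimates \eqref{eq sbp} to replace $\psi(\theta),\psi(\eta)$ by $K_1\delta^{-1/\beta}$ and $\theta^{-1-1/\beta},\eta^{-1-1/\beta}$ by $\delta^{-1-1/\beta}$, this gives
\[
\mathbb P(A_n\cap A_m)\le a_na_m\Big\{\exp\Big(-\tfrac{1}{K}\big(\tfrac{t_n}{t_m}\big)^{\tau}+2K_1\delta^{-1/\beta}\Big)+\exp\Big(2K\big(\tfrac{t_n}{t_m}\big)^{-\tau}\delta^{-1-1/\beta}\Big)\Big\}.
\]
Exactly as in Lemma~\ref{lem limit00}, I would then pass to a sparse subsequence: for $l,N\ge1$ choose $\mathbb N_{l,N}=\{n_k\}_{k\ge N}$ with $t_{n_{k+1}}/t_{n_k}\ge l$. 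Along it the bracketed factor equals $1+o(l)$ with $o(l)\to0$ as $l\to\infty$, whence $\mathbb P(A_n\cap A_m)\le a_na_m(1+o(l))$ for $m,n\in\mathbb N_{l,N}$.

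Because each $a_m\ge\varphi(\delta)>0$, we have $\sum_{m\in\mathbb N_{l,N}}a_m=+\infty$, so the generalized Borel--Cantelli lemma (Lemma~\ref{lem GBC}) yields $\mathbb P\big(\bigcup_{m\in\mathbb N_{l,N}}A_m\big)\ge(1+o(l))^{-1}$. Letting $l\to\infty$ and then $N\to\infty$ forces $\mathbb P\big(\bigcup_{n\ge N}A_n\big)\to1$, i.e.\ $A_n$ occurs infinitely often almost surely, giving the desired contradiction and hence \eqref{eq limit0}. The step demanding the most care is the selection of the sparse subsequences and the verification that the bracketed coefficient tends to $1$: here the two competing exponentials in Proposition~\ref{prop mtu1}, one decaying like $\exp(-c(t_n/t_m)^{\tau})$ and the other of the form $\exp(c'(t_n/t_m)^{-\tau})$, must be simultaneously controlled by sending the ratio $t_n/t_m\to\infty$. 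The remainder is a routine transcription of the argument at $t=0$.
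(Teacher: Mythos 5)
Your proof is correct and is essentially the paper's own (omitted) argument: the paper dismisses this lemma as "similar to Talagrand's Lemma 4.1," whose proof is precisely the pairwise-intersection bound plus generalized Borel--Cantelli scheme that you transcribe from Lemma \ref{lem limit00}. Your adaptation — applying Proposition \ref{prop mtu1} with $t=t_m$, $u=t_n$ for $m<n$ along an increasing sequence, and extracting sparse subsequences with $t_{n_{k+1}}/t_{n_k}\ge l$ so that the coefficient of $a_na_m$ tends to $1$ — is exactly the intended modification of the $t=0$ case.
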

 \begin{proof} Since its proof is similar to that of \cite[Lemma 4.1]{Tal96}, we omit the details here.
 \end{proof}

 To prove the necessity, we will show that $\xi\in LUC_{\infty}(M)$ when  
\begin{align}\label{Eq Nece} I_{\infty}(\xi)=+\infty\  \ \text{and }\, \ \lim_{t\rightarrow\infty} \frac{\xi(t)}{t^H}=0.
\end{align} 
The first step is to construct a suitable sequence. 
\begin{lemma}\label{lem tninfty}  Under    \eqref{Eq Nece},  there exists a sequence 
$\{t_n\}_{n\ge1}$ with the following three properties:
 \begin{itemize}
 \item[(i)]  
 \begin{align}\label{eq tn+1}
 t_{n+1}\ge t_n\left(1+\left(\frac{\xi(t_n)}{t_n^H} \right)^{1/\beta}\right);
\end{align}

 \item[(ii)] 
  \begin{align}\label{eq sum infty}
  \sum_{n=1}^{\infty} \varphi\left(\frac{\xi(t_n)}{t_n^H}\right)=+\infty;
  \end{align}

 \item[(iii)] For all  $m\ge n$ large enough, 
 \begin{align}\label{eq tmn}
 \frac{\xi(t_m)}{t_m^{ 2H}}\le   2  \frac{\xi(t_n)}{t_n^{2H}}.
 \end{align} 
 \end{itemize}
  
 \end{lemma}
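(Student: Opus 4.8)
The plan is to run the greedy recursion of Lemma~\ref{lem tn1} forward to secure (i) and (ii), and then to thin the resulting sequence so that (iii) holds; the real work will be to check that the thinning does not destroy (ii).

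First I would set $t_1=\e^{\e}$ and, given $t_n$, take the greedy successor $s_{n+1}:=t_n+t_n^{\gamma/\beta}\xi(t_n)^{1/\beta}$, which equals $t_n\big(1+(\xi(t_n)/t_n^H)^{1/\beta}\big)$ since $H/\beta=1-\gamma/\beta$; this gives (i) with equality, and the argument of Lemma~\ref{lem infty} together with $\xi(t)/t^H\to 0$ from \eqref{Eq Nece} yields $s_n\uparrow\infty$. To get divergence I would bound $\int_{s_n}^{s_{n+1}}(\xi(t)/t^H)^{-1/\beta}\varphi(\xi(t)/t^H)\,\frac{dt}{t}$ exactly as in Lemma~\ref{lem tn1}: the monotonicity of $\xi$ makes the prefactor cancel against the step length $s_n^{\gamma/\beta}\xi(s_n)^{1/\beta}$, leaving $\varphi(\xi(s_{n+1})/s_n^H)$, and since $s_{n+1}/s_n\to 1$, Lemma~\ref{lem dom} upgrades this to $C\,\varphi(\xi(s_{n+1})/s_{n+1}^H)$. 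Summing and using $I_\infty(\xi)=+\infty$ then forces $\sum_n\varphi(\xi(s_n)/s_n^H)=+\infty$.

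The genuinely new point is (iii): for an oscillating $\xi$ the ratio $g(t):=\xi(t)/t^{2H}$ need not be almost decreasing along $\{s_n\}$, so I would discard the points at which $g$ lies well below its future maximum. Since $g(t)\to 0$ by \eqref{Eq Nece} and $H>0$, I set $J:=\{n:\ g(s_n)\ge \tfrac12\sup_{m\ge n}g(s_m)\}$ and relabel $\{s_n\}_{n\in J}$ as $\{t_n\}$. Every $n\in J$ then satisfies $g(s_m)\le 2g(s_n)$ for all $m\ge n$, which is precisely (iii), while (i) is inherited because discarding points only widens the gaps $t_{n+1}/t_n$.

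The main obstacle is to show that this thinning preserves divergence, i.e. $\sum_{n\in J}\varphi(\xi(s_n)/s_n^H)=+\infty$; this is where the absence of the monotonicity hypothesis of Theorem~\ref{thm main 0}(b) must be compensated. For a discarded index $n\notin J$ there is a later record $r=r(n)$ with $g(s_r)>2g(s_n)$, and since $s_n<s_r$ this forces $\xi(s_n)/s_n^H=g(s_n)s_n^H<\tfrac12\,g(s_r)s_r^H=\tfrac12\,\xi(s_r)/s_r^H$; in terms of the dyadic index $k(\cdot)$ of \eqref{eq kn} the discarded point therefore sits in a strictly higher block than its record. Grouping the discarded indices according to their records and summing the bounds $\varphi(\xi(s_n)/s_n^H)\le \exp(-2^{k(n)}/K_1)$ from \eqref{eq sbp} over these higher blocks, a geometric series in the spirit of \eqref{Eq:sums} should bound the discarded mass by a fixed fraction $c<1$ of the total, so that $\sum_{n\in J}\varphi(\xi(s_n)/s_n^H)\ge(1-c)\sum_n\varphi(\xi(s_n)/s_n^H)=+\infty$. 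I expect the bookkeeping in this block estimate—in particular ensuring that the factor $2$ in the definition of $J$, or a suitable iteration of the selection, pushes the discarded points far enough up the dyadic scale for the geometric series to converge—to be the delicate part of the argument.
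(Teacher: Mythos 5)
Your construction of the greedy sequence $\{s_n\}$ and the verification of (i)--(ii) for it are correct: the step-length/prefactor cancellation plus Lemma \ref{lem dom} is exactly the computation in Lemma \ref{lem tn1} transplanted to infinity, and the relabelled sequence inherits (i) and satisfies (iii) by the definition of $J$. The gap is in the one step you yourself flag as delicate: the claim that the record-thinning preserves divergence. The proposed ``geometric series in the spirit of \eqref{Eq:sums}'' cannot be run with the available estimates, for two reasons. First, the factor $2$ in the definition of $J$ gives only about one dyadic block of separation (in the scale \eqref{eq kn}) between a discarded index and its record; but because of the constant $K_1$ in \eqref{eq sbp}, masses of points one block apart are not comparable in the needed direction --- this is precisely why the paper's Proposition \ref{Prop con2} must take $k_0$ blocks of separation with $2^{k_0}\ge 2K_1+4K_1^2$ before per-point domination $e^{-2^{k}/K_1}\le a_m e^{K_1 2^{k(m)+1}}\cdot(\text{small})$ kicks in. Second, and more fundamentally, \eqref{Eq:sums} works only because the sets $U_{m,k}$ are \emph{defined} to contain at most $N_k$ indices; your thinning has no cardinality cap. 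A single record can be preceded by an unbounded number of discarded indices sitting in the block(s) just above it (a long stretch on which $\theta_n=\xi(s_n)/s_n^H$ stays in block $k$ contributes on the order of $2^k$ greedy points per unit of $\log$-ratio), and ``uncontrolled count $\times\, e^{-2^k/K_1}$'' cannot be absorbed into a fixed fraction of $\sum_n a_n$: indeed, divergence of $\sum_n a_n$ under \eqref{eq sbp} already forces blocks of cardinality at least roughly $e^{2^k/K_1}$ to occur, so huge blocks are the generic situation your argument must handle, and monotonicity of $\varphi$ only gives the useless bound (count)$\,\times\, a_r$ per record $r$. The paper never attempts such a mass comparison between skipped and kept \emph{points}; the excess points beyond $N_k$ are dealt with in the Borel--Cantelli step via the double-exponential separation \eqref{eq tmn exp} and the quasi-independence estimate \eqref{eq mtu1}, not by showing their mass is small.

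The paper's proof of this lemma is structured to avoid the issue altogether: property (iii) is built into the construction rather than enforced by thinning. Given $t_n$, one takes $s_n$ to be the point where $\sup\{\xi(t)/t^{2H};\, t\ge t_n\}$ is attained (finite by \eqref{eq limit0}) and sets $t_{n+1}=s_n\bigl(1+(\xi(s_n)/s_n^H)^{1/\beta}\bigr)$ as in \eqref{eq tn+11}; then (iii) is immediate. Divergence (ii) is then proved by bounding the \emph{integral} of the criterion over the skipped interval $[t_n,s_n]$, not the mass of skipped points: on that interval $\xi(t)/t^H\le t^H\xi(s_n)/s_n^{2H}$ by the argmax property, so Lemma \ref{lem increase} and the substitution $t=us_n$ give
\begin{equation*}
\int_{t_n}^{s_n}\Bigl(\frac{\xi(t)}{t^H}\Bigr)^{-1/\beta}\varphi\Bigl(\frac{\xi(t)}{t^H}\Bigr)\frac{dt}{t}
\le \int_0^1 \bigl(u^H a\bigr)^{-1/\beta}\varphi\bigl(u^H a\bigr)\frac{du}{u},\qquad a=\frac{\xi(s_n)}{s_n^H},
\end{equation*}
and the right-hand side is at most $c\,\varphi(a)$ by convexity of $\psi$ and \eqref{eq psi'} (via $\int_0^1\varphi\bigl((1-v)a\bigr)dv\le K_2 a^{1/\beta}\varphi(a)$). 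Summing these bounds and using $I_\infty(\xi)=+\infty$ yields (ii) directly, with no thinning and hence no need for the count control that your sketch is missing. If you want to salvage your route, you would have to prove a per-record bound on the number of discarded indices (using that a record $r$ can only discard points with $s_r/s_n\le (2\theta_n)^{-1/H}$) \emph{and} enlarge the thinning factor to beat the $K_1$-slack; even then the per-point bounds from \eqref{eq sbp} leave a $K_1^2$ gap in the exponent that does not close, which is why I would recommend the paper's argmax construction instead.
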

 
 \begin{proof} The construction is given by induction over $n$. We take $t_1=\e^{\e}$. Having 
 constructed $t_n$, we find $s_n\ge t_n$ such that 
 $$
 \sup\left\{\frac{\xi (t)}{t^{2H}};\, t\ge t_n\right\}=\frac{\xi(s_n)}{s_n^{2H}}.
 $$
 Using  the continuity of $\xi$ and \eqref{eq limit0}, we know $s_n\in (t_n,\infty)$. We then set 
 \begin{align}\label{eq tn+11}
 t_{n+1}:=s_n+s_n^{\gamma/\beta} \xi(s_n) ^{1/\beta}=s_n\left(1+\left(\frac{\xi(s_n)}{s_n^H}\right)^{1/\beta}\right).
 \end{align}
It is obvious   that 
\eqref{eq tn+1} holds.  By the construction of $s_{n-1}$ and by \eqref{eq limit0}, we have  that for $m\ge n$,
$$
\frac{\xi(t_m)}{t_m^{2H}}\le 
\frac{\xi(s_{n-1})}{s_{n-1}^{2H}}.
$$
 By \eqref{eq limit0} and \eqref{eq tn+11}, we know for all $n$ large enough, $\xi(s_{n-1})\le \xi (t_n)$ and 
 $t_n^{2H}\le 2 s_{n-1}^{2H}$.  Thus, \eqref{eq tmn} holds. To prove \eqref{eq sum infty}, we   show that 
 for all $n$ large enough, 
 \begin{align}\label{eq In}
 I_n:=\int_{t_n}^{t_{n+1}} \left(\frac{\xi(t)}{t^H}\right)^{-1/\beta}\varphi\left(\frac{\xi(t)}{t^H}\right) \frac{dt}{t}
 \le c_{5,1}\varphi\bigg(\frac{\xi(t_{n+1})}{t_{n+1}^H}\bigg),
 \end{align}
where  the constant $c_{5,1}$  depends on $H$ and $\beta$ only.
 
 We write 
 \begin{align}\label{eq: In12}
 I_n=\left[\int_{t_n}^{s_{n}}+\int_{s_n}^{t_{n+1}} \right] \left(\frac{\xi(t)}{t^H}\right)^{-1/\beta}\varphi
 \left(\frac{\xi(t)}{t^H}\right) \frac{dt}{t}=:I_n^1+I_n^2.
 \end{align}
 First, we have 
 \begin{align}\label{eq: In121}
 I_n^2= &\,\int_{s_n}^{t_{n+1}} \left(\frac{\xi(t)}{t^H}\right)^{-1/\beta}\varphi\left(\frac{\xi(t)}{t^H}\right) \frac{dt}{t}\\
 \le &\,  \left(t_{n+1}-s_n \right)s_n^{-\gamma/\beta}  \xi(s_n)^{-1/\beta}  \varphi\left(\frac{\xi(t_{n+1})}{s_n^H}\right)\\
 =&\, \varphi\bigg(\frac{\xi(t_{n+1})}{s_n^H}\bigg).
 \end{align}
 By \eqref{eq tn+11}, we know that  $t_{n+1}\le 2 s_n$ for all $n$ large enough. Hence, 
 \begin{equation}\label{eq: In122}
 \begin{split}
 \frac{\xi(t_{n+1})}{s_n^H}= \,
 \frac{\xi(t_{n+1})}{t_{n+1}^H}\bigg(\frac{t_{n+1}}{s_n}\bigg)^H 
 =& \,\frac{\xi(t_{n+1})}{t_{n+1}^H}\Bigg(1+\bigg(\frac{\xi(s_n)}{s_n^H}\bigg)^{1/\beta}\Bigg)^H\\
 \le &\,\frac{\xi(t_{n+1})}{t_{n+1}^H}\Bigg(1+2^{H/\beta}\bigg(\frac{\xi(t_{n+1})}{t_{n+1}^H}\bigg)^{1/\beta}\Bigg)^H\\
 \le & \,\frac{\xi(t_{n+1})}{t_{n+1}^H}\Bigg(1+c_{5,2}\bigg(\frac{\xi(t_{n+1})}{t_{n+1}^H}\bigg)^{1/\beta}\Bigg),
 \end{split}
 \end{equation}
 here $c_{5,2}$ is a positive constant which only depends on $H$ and $\beta$. 
 
 By \eqref{eq control},  \eqref{eq: In121}  and \eqref{eq: In122}, we know that there exists a constant 
 $c_{5,3}>0$ satisfying that 
 $$
 I_n^2\le c_{5,3} \varphi\bigg(\frac{\xi(t_{n+1})}{t_{n+1}^H}\bigg). 
 $$ 
 
 Now, we turn to study $I_n^1$.  By the construction of $s_n$, we have for any $t_n\le t\le s_n$, 
 $$
 \frac{\xi(t)}{t^H}\le t^H
 \frac{\xi(s_n)}{s_n^{2H}}\le 
 \frac{\xi(s_n)}{s_n^H}. 
 $$
By \eqref{eq limit0}, we can assume $\xi(s_n)/ s_n^H$ is arbitrarily small for all large $n$, so that 
Lemma \ref{lem increase} implies that for any $t_n\le t\le s_n$, 
 $$
 \left(\frac{\xi(t)}{t^H}\right)^{-1/\beta}\varphi \left(\frac{\xi(t)}{t^H}\right)\le \left( \frac{t^H\xi(s_n)}
 {s_n^{2H}}\right)^{-1/\beta} \varphi \left( \frac{t^H\xi(s_n)}{s_n^{2H}}\right),
 $$
 and thus
 $$
 I_n^1\le \int_{t_n}^{s_n}\left(\frac{\xi(t)}{t^H} \right)^{-1/\beta}\varphi \left(\frac{\xi(t)}{t^H}\right)\frac{dt}{t}
 \le  \int_{t_n}^{s_n} \left( \frac{t^H\xi(s_n)}{s_n^{2H}}\right)^{-1/\beta} \varphi \left( \frac{t^H\xi(s_n)}
 {s_n^{2H}}\right)\frac{dt}{t}.
 $$
 Making the change of variable $t=us_n$, we have
 $$
 I_n^1\le \int_0^1 \left(u^H a\right)^{-1/\beta}\varphi\left(u^H a\right)\frac{du}{u},
 $$
 where $a= \xi(s_n)/s_n^H$. It remains to prove that this later integral is at most $c_{5,4}\varphi(a)$ 
 for some positive constant $c_{5,4}$.  By \eqref{eq varphi1}, it suffices to prove that for any $c_{5,5}
 \in (0,1)$, there exists $c_{5,6}>0$ such that
 $$
 \int_{c_{5,5}}^1 \left(u^H a\right)^{-1/\beta}\varphi\left(u^H a\right)\frac{du}{u} \le c_{5,6}\varphi(a).
 $$ 
 
 Setting $v=u^H$, it suffices to prove that 
\begin{align}\label{eq varphi 01}
 \int_0^1 \varphi(va)dv=\int_0^1\varphi\big((1-v)a\big)dv\le  c_{5,6} a^{1/\beta} \varphi(a).
 \end{align}
 By the convexity of $\psi$ and \eqref{eq psi'}, we have
 \begin{align*}
 \psi(a-av)\ge \, \psi(a)+av|\psi'(a)|
 \ge\, \psi(a)+\frac{av}{K_2a^{1+\beta}}.
 \end{align*}
This implies that
 $$
 \varphi\big((1-v)a\big)\le \varphi(a)\exp\left(-\frac{v}{K_2 a^{1/\beta}}\right).
 $$
 Since 
 $$
 \int_0^1 \exp\left(-\frac{v}{K_2 a^{1/\beta}}\right)dv\le K_2 a^{1/\beta},
 $$
Eq. \eqref{eq varphi 01} is proved and the proof is complete. 
 \end{proof}

 For each $n\ge1$, we define $k(n)$ by 
 $$
 2^{k(n)}\le \left(\frac{t_n^H}{\xi(t_n)}\right)^{1/\beta} < 2^{k(n)+1},
 $$
 and we set $I_k:=\big\{n\in\mathbb N; \, k(n)=k \big\}$ for any $k\ge1$. By \eqref{eq limit0}, we know that each $I_k$ is finite.
 
 We recall the constant $K_1$ given in \eqref{eq sbp}. Without loss of generality, we can assume that 
 $K_1\ge c_{2,2}$,  where $c_{2,2}$ is the constant in \eqref{eq Mtu}. For any $k\ge1$, we set 
 $N_k:=\exp\left(2^{k-2}/K_1 \right)$.

 By using the argument in \cite[Proposition 4.2]{Tal96} or Proposition  \ref{Prop con2}, we  obtain the following result. 
 \begin{lemma}\label{Prop con22}  Under    \eqref{Eq Nece}, there exist a positive constant $c_{5,7}$ depending on 
 $\beta$ and $H$ only and a set $J$ with the following properties:
 \begin{itemize}
 \item[(i)]
 \begin{align}\label{eq sum J infty}
 \sum_{n\in J}\varphi\left( \frac{\xi(t_n)}{t_n^H}  \right)=+\infty;
 \end{align}
 
\item[(ii)] Given $n, m\in J$ with $n<m$ such that 
\begin{align}\label{eq kmn}
\card\big(I_{k(m)}\cap [n,m]\big)> N_{k(m)}, 
  \end{align}
 we have 
\begin{align}\label{eq tmn exp}
\frac{t_m}{t_n}\ge\exp\bigg(\exp\Big( 2^{\max\{k(n),\, k(m)\}}/ c_{5,7}  \Big) \bigg).
\end{align} 
 \end{itemize}
 \end{lemma}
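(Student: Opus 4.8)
The plan is to reproduce the proof of Proposition~\ref{Prop con2} with the single structural change that the sequence $\{t_n\}_{n\ge1}$ supplied by Lemma~\ref{lem tninfty} is now \emph{increasing} to $+\infty$ instead of decreasing to $0$; consequently, for $n<m$ one has $t_n<t_m$, so that $t_m/t_n>1$ remains the quantity to be bounded from below and every index comparison in the construction is simply reflected. First I would set $a_n:=\varphi(\xi(t_n)/t_n^H)$ and record, from \eqref{eq sbp} and the definition of $k(n)$, the two-sided estimate $\exp(-K_1 2^{k(n)+1})\le a_n\le \exp(-2^{k(n)}/K_1)$. Since \eqref{eq limit0} guarantees that each band $I_k$ is finite, these bounds are available for all indices.

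To obtain property~(i) I would thin out $\mathbb N$ exactly as before. For each pair $m,k$ put
\[
U_{m,k}:=\bigl\{i<m;\ i\in I_k,\ \card(I_k\cap[i,m])\le N_k\bigr\},
\]
so that $\card(U_{m,k})\le N_k$ and hence $\sum_{i\in U_{m,k}}a_i\le N_k\exp(-2^k/K_1)\le \exp(-2^{k-1}/K_1)$. Taking $k_0$ to be the least integer with $2^{k_0}\ge 2K_1+4K_1^2$ and summing the geometric series as in \eqref{Eq:sums} gives a constant $c\in(0,1)$ with $\sum_{k\ge k(m)+k_0}\sum_{i\in U_{m,k}}a_i\le c\,a_m$. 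Setting $V_m:=\bigcup_{k\ge k(m)+k_0}U_{m,k}$ and $J_0:=\mathbb N\cap(\bigcup_m V_m)^c$, the inclusion $(\bigcup_m V_m)\cap(\bigcup_{k\le p}I_k)\subset\bigcup_{k(m)\le p}V_m$ (valid since $i\in V_m$ forces $k(i)\ge k(m)+k_0$) yields $\sum_{i\notin J_0,\,k(i)\le p}a_i\le c\sum_{k(i)\le p}a_i$; letting $p\to\infty$ and invoking $\sum_n a_n=+\infty$ from \eqref{eq sum infty} shows $\sum_{n\in J_0}a_n=+\infty$. I would then set $J:=J_0\cap[w,\infty)$ for a large $w\in\mathbb N$ fixed below, which preserves \eqref{eq sum J infty}.

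For property~(ii), take $n<m$ in $J$ satisfying \eqref{eq kmn}. The multiplicative recursion \eqref{eq tn+1} of Lemma~\ref{lem tninfty} gives, for every $i\in I_{k(m)}$, $t_{i+1}\ge t_i\bigl(1+(\xi(t_i)/t_i^H)^{1/\beta}\bigr)\ge t_i(1+2^{-k(m)-1})$, since $i\in I_{k(m)}$ forces $(\xi(t_i)/t_i^H)^{1/\beta}>2^{-k(m)-1}$. Multiplying over the more than $N_{k(m)}$ indices of $I_{k(m)}$ lying in $[n,m]$ and using $N_{k(m)}=\exp(2^{k(m)-2}/K_1)$ gives, for $k(m)$ (hence $w$) large,
\[
\frac{t_m}{t_n}\ge\bigl(1+2^{-k(m)-1}\bigr)^{N_{k(m)}}\ge\exp\!\bigl(2^{-k(m)-2}N_{k(m)}\bigr)\ge\exp\!\bigl(\exp(2^{k(m)-3}/c_{5,7})\bigr),
\]
which is the asserted bound in the case $\max\{k(n),k(m)\}=k(m)$, i.e. $k(m)\ge k(n)-k_0$.

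The step I expect to be the crux is the complementary case $k(m)<k(n)-k_0$, in which the count of $I_{k(m)}$-indices is not directly usable and one must instead produce a count of $I_{k(n)}$-indices. Here I would exploit membership in $J_0$: because $n\in J_0$ we have $n\notin V_m$, and since $k(n)\ge k(m)+k_0$ this forces $n\notin U_{m,k(n)}$; as $n<m$ and $n\in I_{k(n)}$, the only way $n$ can fail to lie in $U_{m,k(n)}$ is $\card(I_{k(n)}\cap[n,m])>N_{k(n)}$, whereupon the growth estimate of the previous paragraph applied with $k(n)$ in place of $k(m)$ yields the bound with $\max\{k(n),k(m)\}=k(n)$. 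This completes~(ii). The bookkeeping of these two cases is the only genuinely delicate point; unlike at $t=0$, the variance of the increment has no singularity at infinity, so the recursion is cleanly multiplicative and none of the corrections used near the origin reappear. The sole remaining care is to choose $w$ large enough that all the ``for $k$ large'' estimates hold simultaneously across the finitely many bands that can occur below any fixed threshold.
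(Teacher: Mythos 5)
Your proposal is correct and takes essentially the same approach as the paper: the paper's own proof of this lemma is just a citation of the argument of Proposition \ref{Prop con2} (equivalently, Talagrand's Proposition 4.2), which you have reproduced with the right reorientation for an increasing sequence — in particular, defining $U_{m,k}$ over indices $i<m$ so that in the case $k(n)>k(m)+k_0$ one can test the smaller index $n$ for membership in $U_{m,k(n)}\subseteq V_m$. Your two-case analysis in (ii), the counting argument for (i), and the final choice of $J=J_0\cap[w,\infty)$ are exactly the omitted details.
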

  
By using Lemmas \ref{lem tninfty} and \ref{Prop con22} and using the argument in  
Section \ref{Sec Neces} (also see \cite[Section 5]{Tal96}), one can prove the necessity part of 
Theorem \ref{thm main infty}. The details are omitted.

 \vskip0.5cm
\noindent{\bf Acknowledgments}:  The research of  R. Wang  is partially supported by  NNSFC grant 11871382  
and the Fundamental Research Funds for the Central Universities 2042020kf0031. The research of Y. Xiao is 
partially supported by NSF grant DMS-1855185.

 \vskip0.5cm

\medskip 

\bigskip

\end{document}